\NewDocumentCommand{\ceil}{s O{} m}{%
	\IfBooleanTF{#1} 
	{\left\lceil#3\right\rceil} 
	{#2\lceil#3#2\rceil} 
}
\newcommand{\be}{\begin{equation}}
\newcommand{\ee}{\end{equation}}
\newcommand{\tf}{\widehat{f}}
\newcommand{\hf}{\widetilde{f}}
\newcommand{\bcG}{\bm{\mathcal{H}}}
\newcommand{\C}{\mathbf{D}}
\newcommand{\D}{\mathbf{C}}
\newcommand{\UU}{\mathbf{V}}
\newcommand{\cA}{\mathcal{A}}
\renewcommand{\u}{\mathbf{u}}
\newcommand{\y}{\mathbf{y}}
\newcommand{\x}{\mathbf{x}}
\newcommand{\B}{\mathbf{B}}
\newcommand{\I}{\mathbf{I}}
\newcommand{\E}{\mathbb{E}}
\newcommand{\Z}{\mathbf{Z}}
\newcommand{\bT}{\mathrm{T}}
\newcommand{\bcV}{\bm{\mathcal{ V}}}
\newcommand{\bcU}{\bm{\mathcal{U}}}
\newcommand{\bxi}{\bm{\xi}}
\newcommand{\uu}{\mathbf{u}}
\newcommand{\RR}{\mathbb{R}}
\newcommand{\PP}{\mathbb{P}}
\newcommand{\cP}{\bm{\mathcal{P}}}
\newcommand{\cO}{\mathcal{O}}
\newcommand{\cS}{{\mathcal{S}}}
\newcommand{\inner}[2]{\left\langle #1, #2 \right\rangle}
\newcommand{\ie}{\emph{i.e.}}
\newcommand{\eg}{\emph{e.g.}}
\begin{document}

\title{Faster Stochastic Quasi-Newton Methods}

\author{\name Qingsong Zhang \email qszhang1995@gmail.com      \\
\addr Xidian University, Xi’an, China, and also with JD Tech.
\\
\name Feihu Huang \email {huangfeihu2018@gmail.com, feh23@pitt.edu} \\
\addr Department of Electrical and Computer Engineering, University of Pittsburgh, USA \\
   \name Cheng Deng	\email {chdeng@mail.xidian.edu.cn} \\
 \addr School of Electronic Engineering, Xidian University, Xi'an, China
 \\
   \name Heng Huang \email {heng.huang@pitt.edu} \\
\addr JD Finance America Corporation \\ University of Pittsburgh, USA\\
}

\editor{}
\maketitle

\begin{abstract}
Stochastic optimization methods have become a class of popular optimization tools in machine learning. Especially, stochastic gradient descent (SGD) has been widely used for machine learning problems such as training neural networks due to low per-iteration computational complexity. In fact, the Newton or quasi-newton methods leveraging second-order information are able to achieve better solution than the first-order methods. Thus, stochastic quasi-Newton (SQN) methods have been developed to achieve better solution efficiently than the stochastic first-order methods by utilizing approximate second-order information. However, the existing SQN methods still do not reach the best known stochastic first-order oracle (SFO) complexity. To fill this gap, we propose a novel faster stochastic quasi-Newton method (SpiderSQN) based on the variance reduced technique of SIPDER. We prove that our SpiderSQN method {reaches} the best known SFO complexity of  $\mathcal{O}(n+n^{1/2}\epsilon^{-2})$ in the finite-sum setting to obtain an $\epsilon$-first-order stationary point. To further improve its practical performance, we incorporate SpiderSQN with different momentum schemes. Moreover, the proposed algorithms are generalized to the online setting, and the corresponding SFO complexity of $\mathcal{O}(\epsilon^{-3})$ is developed, which also matches the existing best result. Extensive experiments on benchmark datasets demonstrate that our new algorithms outperform state-of-the-art approaches for nonconvex optimization.
\end{abstract}

\begin{keywords}
Stochastic quasi-Newton method, nonconvex optimization, variance reduction, momentum acceleration
\end{keywords}

%

\section{Introduction}
 In  this paper, we focus on the following unconstrained stochastic nonconvex optimization:
\begin{align}
\min_{x\in \mathbb{R}^d} f(x) :=\left\{
\begin{aligned}
 {\mathbb{E}_{{u}\sim \mathbb{P}}[f_u(x)]}  & \qquad \mbox{(online)}\\
 \frac{1}{n}\sum_{i=1}^n f_i(x) & \qquad \mbox{(finite-sum)}
 \end{aligned} \right., \label{eq: P}\tag{P}
\end{align}
where $x\in \mathbb{R}^d $ corresponds to the parameters defining a model, ${\mathbb{E}_{u\sim \mathbb{P}}[f_u(x)] }$ denotes a population risk over $u \sim \mathbb{P}$, and {$f_i: \mathbb{R}^d \to \mathbb{R}$ denotes the loss on the $i$-th sample for $\forall\ i \in {1, ..., n}$ (or $i\sim\mathbb{P}$)}. Problem  (\ref{eq: P}) capsules a widely range of machine learning problems such as truncated square loss \cite{xu2018learning} for regression and deep neural network \cite{goodfellow2016deep}. In fact,
the SGD \cite{ghadimi2016mini} is {a representative} method to solve the problem (\ref{eq: P}) due to its per-iteration computation efficiency. Recently, there have been many works studying SGD and its variance reduction variants, including SVRG \cite{reddi2016stochastic}, SAGA \cite{reddi2016fast}, SCSG\cite{lei2017non}, SARAH \cite{nguyen2017stochastic}, {SNVRG} \cite{zhou2018stochastic} and SPIDER \cite{fang2018spider,NIPS2019_8511}. In particular, SPIDER has been shown in \cite{fang2018spider} to achieve the SFO complexity lower bound for a certain regime. Such idea has been {extended } to optimization over mainfolds in \cite{zhou2019faster}, zeroth-order optimization  in \cite{HuangNonconvex,ji2019improved}, cubic-regularized method in \cite{zhou2020stochastic}, and alternating direction method of multipliers in \cite{huang2019faster}.

Although SGD is very effective, its performance maybe poor owing that it only utilizes the first-order information. In contrast, Newton's method utilizing the Hessian information is more robust and can achieve better accuracy \cite{sohl2014fast,allen2018natasha}, while it is extremely time consuming to compute Hessian matrix and its inverse. Therefore, many works have been {proposed} toward designing better SGD methods integrated with approximate Hessian information, \ie, the SQN methods.  There have been many works focusing on developing SQN methods such as SGD with quasi-Newton (SGD-QN) studied in \cite{bordes2009sgd} and stochastic approximation based L-BFGS proposed in \cite{byrd2016stochastic}. Recently, some SQN methods equipped with the variance reduction technique have been developed to alleviate the effect of variance introduced by stochastic estimator \cite{kolte2015accelerating,lucchi2015variance,moritz2016linearly,gower2016stochastic}. Besides above methods concerning convex or strongly convex problems, progresses have been made toward designing SQN methods for nonconvex cases. Wang \emph{et al.} \cite{wang2017stochastic} analyzed the convergence guarantee of the SGD-QN for nonconvex problems, Wang \emph{et al.} \cite{wang2018stochastic} developed a stochastic proximal quasi-Newton for nonconvex composite optimization, and Gao \emph{et al.}  \cite{gao2018stochastic} proposed the stochastic L-BFGS method for nonconvex sparse learning problems.

Stochastic quasi-Newton methods inherit many appealing advantages from both SGD  and quasi-Newton methods, \eg, efficiency, robustness and better accuracy. However, existing SQN methods still do not reach the best known SFO complexity, resulting the limited application to machine learning. It is thus of vital importance to improve the SFO complexity of SQN methods for nonconvex optimization. For this reason, we propose a faster SQN method (namely SpiderSQN) by leveraging the variance reduction technique of SIPDER.

Albeit SpiderSQN achieves the optimal SFO complexity for nonconvex optimization, its practical performance may not exhibit such optimality. Thus, we consider utilizing momentum acceleration technology to obtain better practical performance. Moreover, to deal with cases where the number of training samples is extremely large or even infinite, the SpiderSQN based algorithms are extended to the online case with theoretical guarantee. To give a thorough comparison of our proposed algorithm with existing stochastic first-order algorithms
and SQN for nonconvex optimization, we summarize the SFO complexity of the most relevant algorithms to achieve an $\epsilon$-first-order stationary point in Table \ref{tab:comparsion}.  The main contributions of this paper are summarized as follows.
\begin{enumerate}[]
    \item
     We propose a novel faster stochastic quasi-Newton method (SpiderSQN) for nonconvex optimization in the form of finite-sum. Moreover, we prove that the SpiderSQN can achieve the best known optimal SFO complexity of $\mathcal{O}(n+n^{1/2}\epsilon^{-2})$ to obtain an $\epsilon$-first-order stationary point.
    \item
    We extend the SpiderSQN to the online setting, and propose the faster online SpiderSQN algorithms for nonconvex optimization. Moreover, we prove that the online SpiderSQN achieve the best known optimal SFO complexity of $\mathcal{O}(\epsilon^{-3})$.
     \item
    To improve the practical performance of the proposed methods, we apply momentum schemes to them, which are demonstrated to have satisfactory practical effects.
    \item
    Moreover, we prove that our SpiderSQN methods have the lower SFO complexity of $\mathcal{O}(n^{1/2}\epsilon^{-1/2})$, which achieves the optimal SFO complexity of $\mathcal{O}(n^{1/2}\epsilon^{-1/2})$.
\end{enumerate}
\begin{table}[!t]
\centering
\caption{Comparison of results on SFO complexity for smooth nonconvex optimization. Note that we omit the poly-logarithmic
factors of $d, n, \epsilon$. Especially, SpiderSQN-M represents SpiderSQN with different momentum schemes.}
\label{tab:comparsion}
\small{\begin{tabular}{lcc}
\toprule
\multicolumn{1}{l}{Algorithm}        & \multicolumn{1}{l}{Finite-sum}               & Online                               \\
\midrule
SGD \cite{ghadimi2016mini}
& $\cO({n\epsilon^{-2}})$                             & $\cO({\epsilon^{-4}})$                      \\
SVRG\cite{reddi2016stochastic}
& {$\cO({n+n^{\frac{2}{3}}\epsilon^{-2}})$}                   & $\cO({\epsilon^{-\frac{10}{3}}})$           \\
SARAH \cite{nguyen2017stochastic}
& $\cO({n+\epsilon^{-4}})$                                        & $\cO({\epsilon^{-4}})$                      \\
SNVRG  \cite{zhou2018stochastic}
& $\cO({n+n^{\frac{1}{2}}\epsilon^{-2}})$                             & $\cO({\epsilon^{-3}})$  \\
SPIDER \cite{fang2018spider,NIPS2019_8511}
& $\cO({n+n^{\frac{1}{2}}\epsilon^{-2}})$                             & $\cO({\epsilon^{-3}})$  \\
SQN with SGD   \cite{wang2017stochastic}
& $\cO({n\epsilon^{-2}})$                             & $N/A$ \\
SQN with SVRG   \cite{wang2017stochastic}
& $\cO({n+n^{\frac{2}{3}}\epsilon^{-2}})$                              & $N/A$ \\
{SpiderSQN ({\bf Ours})}
& $\cO({n+n^{\frac{1}{2}}\epsilon^{-2}})$                             & $\cO({\epsilon^{-3}})$  \\
{SpiderSQN-M ({\bf{Ours}})}
& $\cO({n+n^{\frac{1}{2}}\epsilon^{-2}})$                             & $\cO({\epsilon^{-3}})$  \\
\bottomrule
\end{tabular}}
\end{table}


\section{Preliminaries}\label{sec: pre}
In this section, some preliminaries are presented. Since finding the global minimum of problem (\ref{eq: P}) is general NP-hard \cite{hillar2013most}, this work instead focuses on finding an $\epsilon$-first-order stationary point and studies the SFO complexity of achieving it. First, we give the necessary definitions and assumptions.
\begin{definition}\label{defin1}
{An $\epsilon$-first-order stationary point denotes that for $x$ uniformly drawn from $x_1, \cdots, x_K$, where $K$ is the total number of iterations there is $\mathbb{E}\|\nabla f(x)\| \le \epsilon$, where $\epsilon > 0$ is the accuracy parameter.}
\end{definition}
\begin{definition}\label{definsfo}
Given a sample $i$ ($i\in 1,\cdots, n$ or $i\sim \mathbb{P}$) and a point $x\in \mathbb{R}^d$, a stochastic/incremental first-order oracle (SFO/IFO) \cite{reddi2016stochastic} returns the pair $(f_i(x), \nabla f_i(x))$.
\end{definition}
\begin{assumption}\label{assum1}
	Function $f$ is bounded below, i.e.,
		\begin{align}
		f^*:=\inf_{x\in \mathbb{R}^d} f(x) > -\infty.
		\end{align}
\end{assumption}
\begin{assumption}\label{assum2}
		Individual  function $f_i, i=1,\ldots,n$ or $i\sim\mathbb{P}$ is $L$-smooth, \ie, there exists an $L>0$ such that
		\begin{align}
		\|\nabla f_i (x) - \nabla f_i (y)\| \le L\|x-y\|, \quad \forall  x,y\in \mathbb{R}^d,
		\end{align}
\end{assumption}
Above two assumptions are standard in the analysis of nonconvex optimization \cite{ghadimi2016accelerated,huang2019faster,HuangNonconvex}, where Assumption \ref{assum1} guarantees the feasibility of problem (\ref{eq: P}) and Assumption \ref{assum2} imposes smoothness on the individual loss functions.

\begin{assumption}\label{assum6}
 For $\forall i \in 1,\cdots, n$ (or $i\sim \mathbb{P}$), function $f_i(x)$ is twice continuously differentiable with respect to $x$.
 There exists a positive constant $\kappa$ such that $\|\nabla^2 f_i(x)\|\le \kappa$ for $\forall$ $x$.
\end{assumption}
Note that Assumption \ref{assum6} is  standard  for SQN methods focusing on nonconvex problem \cite{wang2017stochastic}.
\begin{assumption}\label{assum3}
	There exist two positive constants $\sigma_{\mathrm{min}}$, and $\sigma_{\mathrm{max}}$ such that
\begin{align}
		\sigma_{\mathrm{min}}I \preceq H_k \preceq \sigma_{\mathrm{max}}I,
		\end{align}
where {$H_k$ is the inverse Hessian approximation matrix} and notation $A \preceq B $ with $A, B\in \mathbb{R}^{d\times d}$ means that $A-B$ is positive semidefinite.
\end{assumption}

\begin{assumption}\label{assum4}
	For any $k\ge 2$, the random variable $H_k$ ($k\ge 2$) depends only on $v_{k-1}$ and $\xi_{k}$
\begin{align}
		\mathbb{E}[H_kv_k|\xi_{k}, v_{k-1}] = H_kv_k,
		\end{align}
where the expectation is taken with respect to $|\xi_{k}|$ samples generated for calculation of $\nabla f_{\xi_k}$.
\end{assumption}
Assumptions \ref{assum3} and \ref{assum4} are commonly used for SQN methods \cite{wang2017stochastic,moritz2016linearly}, where Assumption \ref{assum3} shows that the matrix norm of $H_k$ is bounded and Assumption \ref{assum4} means although $H_k$ is generated iteratively based on historical gradient information by a random process, given $v_{k-1}$ and $\xi_{k}$ the $H_kv_k$ is determined.

\subsection{SGD Methods for Nonconvex Optimization}
Stochastic first-order optimization methods have been widely used for solving machine learning tasks. As for nonconvex optimization, a classical algorithm is the SGD \cite{ghadimi2016mini} which has an overall SFO complexity of $\mathcal{O}(\epsilon^{-4})$ to achieve an $\epsilon$-first-order stationary point. Also, a variety of SGD variants equipped with variance reduction have been proposed such as the SVRG, SAGA, and its application to federated learning \cite{zhang2020secure}. Moreover, the corresponding SFO complexity of obtaining an $\epsilon$-first-order stationary point is $\mathcal{O}(n^{2/3}\epsilon^{-2})$ \cite{reddi2016stochastic,reddi2016fast}. Recently, some algorithms with a new type of stochastic variance reduction technique have been exploited, including {SNVRG}, SARAH and SPIDER \cite{nguyen2017sarah,zhou2018stochastic,fang2018spider}, which uses more fresh gradient information to evaluate the gradient estimator. Therefore, take the {SNVRG} as an example, it has an improved SFO complexity of min$\{\mathcal{O}(n^{1/2}\epsilon^{-2}),\mathcal{O}(\epsilon^{-3})\}$ to achieve an $\epsilon$-first-order stationary point.

\subsection{SQN Methods For Nonconvex Optimization}
Newton's methods using Hessian information have rapid convergence rate (both in theory and practice) \cite{moritz2016linearly} and are popular for solving nonconvex problems \cite{kohler2017sub,zhou2018stochastic1,zhou2019stochastic}. However, time consumption of computing Hessian matrix and its inverse is extremely high. To address this problem, many quasi-Newton (QN)-based methods have been widely studied such as BFGS, L-BFGS, and the damped L-BFGS \cite{nocedal2006numerical}.
In this paper, we adopt the stochastic damped L-BFGS (SdLBFGS) \cite{wang2017stochastic} for nonconvex optimization. Let $k$ be current iteration, based on history information, SdLBFGS uses a two-loop recursion to generate a descent direction $d_k = H_kv_k$ without calculating inverse matrix $H_k$ explicitly.
\begin{algorithm}[!t]
\caption{{ Core step of stochastic damped L-BFGS} \cite{wang2017stochastic}}\label{SdLBFGS}
\begin{algorithmic}[1]
\REQUIRE {Let $k$ be current iteration. Given the stochastic gradient $v_{k-1}$ at iteration $k-1$, the samples batch $\xi_k$ at iteration $k$ and vector pairs $\{s_j, \bar{y}_j, \rho_{j}\}$ $j=k-m,\ldots,k-2$, {where $m$ is the memory size}, and $u_0=v_k$}
\STATE Calculate $s_{k-1} $, $\bar{y}_{k-1}$ and $\gamma_k$
\STATE  Calculate $\hat{y}_{k-1}$ through Eq.~\ref{2.2} and $\rho_{k-1}=(s_{k-1}^\top {\hat{y}_{k-1}})^{-1}$
  \FOR {$i=0,\ldots,\min\{m,k-1\}-1$}
  \STATE  Calculate $\mu_i=\rho_{k-i-1}u_i^\top s_{k-i-1}$
  \STATE  Calculate $u_{i+1} = u_i - \mu_i {\hat{y}_{k-i-1}}$
  \ENDFOR
  \STATE  Calculate $v_0=\gamma_k^{-1}u_p$
  \FOR {$i=0,\ldots,\min\{m,k-1\}-1$}
  \STATE  Calculate $\nu_i=\rho_{k-m+i}v_i^\top {\hat{y}_{k-m+i}}$
  \STATE  Calculate $
  \bar{v}_{i+1} = \bar{v}_i + (\mu_{m-i-1}-\nu_i)s_{k-m+i}$.
  \ENDFOR
  \ENSURE {$H_kv_k=\bar{v}_p$.}
\end{algorithmic}
\end{algorithm}
Specially, at step 1, vector pair {$\{s_{k-1}, \bar{y}_{k-1}\}$} is computed as $s_{k-1} = x_{k}-x_{k-1}$ and $\bar{y}_{k-1} = v_{k} - v_{k-1}$, and $\gamma_{k} = {\mathrm{max}}\{\frac{\bar{y}_{k-1}^\top \bar{y}_{k-1}}{s_{k-1}^\top \bar{y}_{k-1}}, \delta \}$, where $\delta$ is a positive constant. At setp 2, SdLBFGS introduces a vector {$\hat{y}_{k-1}$}
\be {\label{2.1}}
\hat{y}_{k-1} = \theta_{k-1} \bar{y}_{k-1} + (1-\theta_{k-1})H_{{k-1},0}^{-1}s_{k-1}, k\geq1,
\ee
where $H_{k,0} = \gamma_{k}^{-1}I_{d\times d} $, $k\geq0$, and $\theta_{k-1}$ is defined~as
\be \label{2.2}
\theta_{k-1}=\left\{\begin{array}{ll}{\frac{0.75 \sigma_{k-1}}{\sigma_{k-1}-s_{k-1}^{\top} \bar{y}_{k-1}},} & {\text { if } s_{k-1}^{\top} \bar{y}_{k-1}<0.25 \sigma_{k-1}} \\ {1,} & {\text { otherwise }}\end{array}\right.,
\ee
where $\sigma_{k-1} = s_{k-1}^{\top} H_{k, 0}^{-1} s_{k-1}$. Based on $\{s_{k-1}, \hat{y}_{k-1}\}$, $H_kv_k$ can be approximated through steps 3 to 10.

Importantly, SdLBFGS is a computation effective program because the whole procedure  takes only $(6m+6)d$ multiplications. Especially, the SdLBFGS with variance reduction is proposed \cite{wang2017stochastic} by incorporating SdLBFGS into SVRG. However,  its best SFO complexity to obtain an $\epsilon$-first-order stationary point is $\mathcal{O}(n^{2/3}\epsilon^{-2})$, which is not competitive to state-of-the-art stochastic first-order methods. Therefore, it is desirable to improve the SFO complexity of existing SQN methods.
\subsection{Momentum {Acceleratation} for Nonconvex Optimization}
Momentum acceleration scheme is a simple but widely used acceleration technique for optimization problem. Recently, a variety of accelerated methods have been developed for nonconvex optimization. For examples,
the stochastic gradient algorithms with momentum scheme is proposed in \cite{ghadimi2016accelerated}, which have been proved to converge as fast as gradient descent method for nonconvex problems. Li \emph{et al.} \cite{li2017convergence} explored the convergence of the algorithm proposed in \cite{yao2016efficient} under a certain local gradient dominance geometry for nonconvex optimization. Furthermore, Wang \emph{et al.} \cite{wang2018cubic} studied the convergence to a second-order stationary point under the momentum scheme. However, existing works hardly ever study the acceleration of the SQN method for nonconvex optimization. To this end, this paper focuses on accelerating SQN methods with different momentum schemes.
\section{Faster SQN Methods for Nonconvex Optimization}\label{sec: spider-sqn}
In this section, we propose a novel faster SQN method to solve the nonconvex problem (\ref{eq: P}) for finite-sum case.
\begin{algorithm}[!t]
\caption{SpiderSQN for Nonconvex Optimization}\label{SPIDER-SQN}
\begin{algorithmic}[1]
\REQUIRE {$ \left|\xi_{k}\right|, \eta, q, K \in \mathbb{N}$.}
\FOR {$k=0, 1, \ldots, K-1$}
  \IF {$\text{mod}(k, q)= 0$}
  \STATE Compute $v_{k} = \nabla f(x_k),$
  \ELSE
  \STATE Sample $\xi_k \overset{\text{Unif}}{\sim} \{1,\ldots,n\}$, and compute\\
   $v_k = \nabla f_{\xi_k} (x_k) - \nabla f_{\xi_k} (x_{k-1}) + v_{k-1}$.
  \ENDIF
  \STATE Compute $d_k = {H_k}v_k$ through SdLBFGS \cite{wang2017stochastic},
  \STATE $x_{k+1} =  x_{k} - \eta d_{k}$.
  \ENDFOR
 \STATE {\bfseries Output (in theory):} $x_\zeta$, where $\zeta \overset{\text{Unif}}{\sim} \{1,\ldots,K\}$.
 \STATE {\bfseries Output (in practice):}  $x_{K}$.
\end{algorithmic}
\end{algorithm}
\begin{algorithm}[!t]
\caption{SpiderSQN-M for Nonconvex Optimization}\label{SPIDER-SQN-M}
\begin{algorithmic}[1]
\REQUIRE {$ \left|\xi_{k}\right|, q, K \in \mathbb{N}$, $\{\beta_k\}_{k=0}^{K-1} >0$.}
\STATE Set $\alpha_k = \frac{2}{k+1}$ for $k=0,...,K$ and $\lambda_k \in [\beta_k, (1+\alpha_k)\beta_{k}]$ for $k=0,...,K-1$.
\STATE Initialize $y_0 = x_0\in \mathbb{R}^d$.
\FOR {$k=0, 1, \ldots, K-1$}
  \STATE $z_{k} = (1-\alpha_{k+1})y_{k} + \alpha_{k+1} x_{k}$,
  \IF {$\text{mod}(k, q)= 0$}
  \STATE Compute $v_{k} = \nabla f(z_k)$,
  \ELSE
  \STATE Sample $\xi_k \overset{\text{Unif}}{\sim} \{1,\ldots,n\}$, and compute\\
   $v_k = \nabla f_{\xi_k} (z_k) - \nabla f_{\xi_k} (z_{k-1}) + v_{k-1}$,
  \ENDIF
  \STATE Compute $d_k = {H_k}v_k$ through SdLBFGS \cite{wang2017stochastic},
  \STATE $x_{k+1} =  x_{k} - \lambda_{k} d_{k}$,
  \STATE $y_{k+1} = z_{k} - \beta_k d_{k}$.
  \ENDFOR
 \STATE {\bfseries Output (in theory):} $x_\zeta$, where $\zeta \overset{\text{Unif}}{\sim} \{1,\ldots,K\}$.
 \STATE {\bfseries Output (in practice):}  $x_{K}$.
\end{algorithmic}
\end{algorithm}
\subsection{Spider Stochastic Quasi-Newton Algorithm}
To improve the SFO complexity of SQN method, a new variance reduction technique SPIDER/SpiderBoost is adopted to control its intrinsic variance. The proposed SpiderSQN with improved SFO complexity is shown in Algorithm \ref{SPIDER-SQN}.

At each iteration, besides evaluating the full gradient every $q$ iterations, the stochastic gradient $v_k$ is updated as
\be \label{3.1}
v_k = \nabla f_{\xi_k} (x_k) - \nabla f_{\xi_k} (x_{k-1}) + v_{k-1},
\ee
where $\nabla f_{\xi_k}(x_k) =\frac{1}{\left|\xi_{k}\right|} \sum_{i \in \xi_{k}} \nabla f_{i}(x_k)$ and $\xi_k$ is a mini-batch where samples are uniformly sampled with replacement. It is obvious from Eq.~(\ref{3.1}), a more fresh stochastic gradient information $v_{k-1}$ is utilized to update $v_k$, and thus SpiderSQN has an improved SFO complexity compared with existing stochastic quasi-Newton methods. At step 8, $x_k$ is updated by the Hessian informative descent direction.

\subsection{Spider Stochastic Quasi-Newton with Momentum Scheme}
To improve the pratical performance of SpiderSQN, the momentum scheme is adopted for acceleration. The framework of SpiderSQN with momentum scheme (referred
as SpiderSQNM) is shown in Algorithm \ref{SPIDER-SQN-M}. The momentum scheme in Algorithm \ref{SPIDER-SQN-M} refers to steps 4, 11 and 12, where variables $x_k$ and $y_k$ are updated through the $d_k$, and $z_k$ is a convex combination of $x_k$ and $y_k$ controlled by the momentum coefficient $\alpha_k$. In this algorithm, an iteration-wise diminishing scheme is applied, where the momentum coefficient is set as $\alpha_k = \frac{2}{k+1}$.
\begin{algorithm}[!t]
\caption{SpiderSQN for Online Nonconvex Optimization}\label{Spider-SQN-online}
\begin{algorithmic}[1]
\REQUIRE {$ \left|\xi_{0}\right|, \left|\xi_{k}\right|,\eta, q, K \in \mathbb{N}$.}
\FOR {$k=0, 1, \ldots, K-1$}
  \IF {$\text{mod}(k, q)= 0$}
  \STATE Draw $|\xi_0|$ samples, and compute $v_{k} = \nabla f_{\xi_0} (z_k),$
  \ELSE
  \STATE Draw $|\xi_k|$ samples, and compute\\
   $v_k = \nabla f_{\xi_k} (z_k) - \nabla f_{\xi_k} (z_{k-1}) + v_{k-1}$.
  \ENDIF
  \STATE Compute $d_k = {H_k}v_k$ through SdLBFGS \cite{wang2017stochastic},
  \STATE $x_{k+1} =  x_{k} - \eta d_{k}$.
  \ENDFOR
 \STATE {\bfseries Output (in theory):} $x_\zeta$, where $\zeta \overset{\text{Unif}}{\sim} \{1,\ldots,K\}$.
 \STATE {\bfseries Output (in practice):}  $x_{K}$.
\end{algorithmic}
\end{algorithm}
~
\begin{algorithm}[!t]
\caption{SpiderSQN-M for Online Nonconvex Optimization}\label{Spider-SQNM-online}
\begin{algorithmic}[1]
\REQUIRE {$ \left|\xi_{0}\right|, \left|\xi_{k}\right|, q, K \in \mathbb{N}, \{\beta_k\}_{k=0}^{K-1} >0$.}
\STATE Set $\alpha_k = \frac{2}{k+1}$ for $k=0,...,K$ and $\lambda_k \in [\beta_k, (1+\alpha_k)\beta_{k}]$ for $k=0,...,K-1$.
\STATE Initialize $y_0 = x_0\in \mathbb{R}^d$.
\FOR {$k=0, 1, \ldots, K-1$}
  \STATE $z_{k} = (1-\alpha_{k+1})y_{k} + \alpha_{k+1} x_{k}$,
  \IF {$\text{mod}(k, q)= 0$}
  \STATE Draw $|\xi_0|$ samples, and compute $v_{k} = \nabla f_{\xi_0} (z_k),$
  \ELSE
  \STATE Draw $|\xi_k|$ samples, and compute\\ $v_k = \nabla f_{\xi_k} (z_k) - \nabla f_{\xi_k} (z_{k-1}) + v_{k-1}$.
  \ENDIF
  \STATE Compute $d_k = {H_k}v_k$ through SdLBFGS \cite{wang2017stochastic},
  \STATE $x_{k+1} =  x_{k} - \lambda_k d_{k}$,
  \STATE $y_{k+1} = z_{k} - \beta_k d_{k}$.
  \ENDFOR
 \STATE {\bfseries Output (in theory):} $x_\zeta$, where $\zeta \overset{\text{Unif}}{\sim} \{1,\ldots,K\}$.
 \STATE {\bfseries Output (in practice):}  $x_{K}$.
\end{algorithmic}
\end{algorithm}
\subsection{Other Momentum Acceleration Strategies}\label{momentum analysis}
The momentum scheme adopted in Algorithm \ref{SPIDER-SQN-M} is a vanilla one whose momentum coefficient $\alpha_k$ is iteration-wise diminishing. When the iteration $k$ becomes larger, $\alpha_k$ can be considerably small, leading to a limited acceleration. Thus, other momentum acceleration strategies are explored to alleviate this problem. Following are two powerful momentum schemes, where $\alpha_k$ can remain relatively large after many epochs. One is the epochwise-restart scheme, whose $\alpha_k$ is set as
\be \label{3.2}
\alpha_k = \frac{2}{\bmod (k, q)+1}, \quad k = 0, \ldots, K-1.
\ee
As the name suggests, $\alpha_k$ restarts at the beginning of each epoch.
Another effective momentum strategy is the epochwise-diminishing scheme with following momentum coefficient
\be \label{3.3}
\alpha_{k}=\frac{2}{\lceil k / q\rceil+ 1}, \quad k=0, \ldots, K-1,
\ee
where $\lceil  \cdot \rceil$ denotes the ceiling function. As defined in Eq.~(\ref{3.3}), the momentum coefficient $\alpha_k$ is a constant during a fixed epoch, and will diminish slowly as $k$ growing sharply.
To obtain the variants of SpiderSQN with above two momentum schemes, one just replace the $\alpha_k$ in Algorithm \ref{SPIDER-SQN-M} as defined.
 \section{Faster SQN Methods for Online Nonconvex Optimization}
In super large-scale learning, sample size $n$ can be considerably large or even infinite.
It is thus desirable to design algorithms with SFO complexity independent of $n$. Such algorithm are referred as online (streaming) algorithm. For this reason, we propose the online faster stochastic quasi-Newton method to solve the online problem:
\be
\min _{x\in \mathbb{R}^d} f(x) := {\mathbb{E}_{u\sim\mathbb{P}}[f_u(x)] }, \label{3.4}
\ee
where ${\mathbb{E}_{u\sim\mathbb{P}}[f_u(x)] }$ denotes a population risk over an underlying data distribution $\mathbb{P}$. Since the problem can be perceived as having infinite samples, it is impossible to evaluate the full gradient $\nabla f(x)$ by running across the whole dataset. The stochastic sampling thus is adopted as a surrogate strategy. Algorithm \ref{Spider-SQN-online} shows the detail steps of the proposed online SpiderSQN algorithm.

At steps 3 and 5 the gradient is estimated over the mini-batch samples {drawn from} the underlying distribution $\mathbb{P}$. Especially, due to the nature of the online data flow, these samples are sampled without replacement. The variant with vanilla momentum scheme is shown in Algorithm \ref{Spider-SQNM-online}. As for the counterparts with epochwise-restart momentum and  epochwise-diminishing momentum, one just replace the $\alpha_k$ in Algorithm \ref{Spider-SQNM-online} with the one defined in Eqs.~(\ref{3.2}) and (\ref{3.3}), respectively.
\begin{table*}[!t]
\centering
\caption{Total computational complexities of Algorithms~\ref{SdLBFGS} to \ref{Spider-SQNM-online} in an outer loop. Especially, the results of Algorithm~\ref{SdLBFGS} are obtained for $q$ iterations, an outer loop of Algorithms~\ref{SPIDER-SQN} to \ref{Spider-SQNM-online} includes $q$ computations of the stochastic gradient, $q$ calls of Algorithm~\ref{SdLBFGS}, and one computation of the full gradient.}
\label{complexity table}
\setlength{\tabcolsep}{1.3mm}{\small{\begin{tabular}{llllllllll}
\toprule
\multicolumn{2}{c}{Algorithm~\ref{SdLBFGS}} & \multicolumn{2}{c}{Algorithm~\ref{SPIDER-SQN}} & \multicolumn{2}{c}{Algorithm~\ref{SPIDER-SQN-M}} & \multicolumn{2}{c}{Algorithm~\ref{Spider-SQN-online}} & \multicolumn{2}{c}{Algorithm~\ref{Spider-SQNM-online}} \\ \midrule
step          & complexity                 & step         & complexity                     & step         & complexity                      & step        & complexity                             & step        & complexity                              \\
1              & $\mathcal{O}(d)$           & 3             & $\mathcal{O}(nd)$               & 4              & $\mathcal{O}(d)$                & 3            & $\mathcal{O}(\epsilon^{-2}d)$          & 4            & $\mathcal{O}(d)$                        \\
2              & $\mathcal{O}(d)$           & 5             & $\mathcal{O}(n^{1/2}d)$        & 6              & $\mathcal{O}(nd)$                & 5            & $\mathcal{O}(\epsilon^{-1}d)$          & 6            & $\mathcal{O}(\epsilon^{-2}d)$           \\
3-6            & $\mathcal{O}(md)$          & 7             & $\mathcal{O}(md)$              & 8              & $\mathcal{O}(n^{1/2}d)$         & 7            & $\mathcal{O}(md)$                      & 8            & $\mathcal{O}(\epsilon^{-1}d)$           \\
7              & $\mathcal{O}(d)$           & 8             & $\mathcal{O}(d)$               & 10             & $\mathcal{O}(md)$               & 8            & $\mathcal{O}(d)$                       & 10           & $\mathcal{O}(md)$                       \\
8-11           & $\mathcal{O}(md)$          &--               & --                               & 11-12          & $\mathcal{O}(d)$                &--              &--                                        & 11-12        & $\mathcal{O}(d)$                        \\ \hline
\textbf{total}          & $\mathcal{O}(qmd)$          & \textbf{total}           & $\mathcal{O}(nd+qmd)$           & \textbf{total}            & $\mathcal{O}(nd+qmd)$            & \textbf{total}          & $\mathcal{O}(\epsilon^{-2}d+qmd)$       & \textbf{total}          & $\mathcal{O}(\epsilon^{-2}d+qmd)$        \\ \bottomrule
\end{tabular}}}
\end{table*}
\section{Convergence Analysis}
In this section, we analyse the convergence rate of the faster stochastic quasi-Newton method and its online version. Detailed convergence analysis can be found in the Appendix.
\subsection{Convergence Analysis of Faster SQN Method}
First, the convergence properties of the four SpiderSQN-type of algorithms are presented. Let Assumptions \ref{assum1} to \ref{assum4} hold, and the following theorems are obtained.
\begin{theorem}\label{thm: ssqn}
	Apply Algorithm \ref{SPIDER-SQN} to solve the problem (\ref{eq: P}), and suppose $x_{\zeta}$ is its output. Let $q=|\xi_k|\equiv \sqrt{n}$, and $\eta \equiv \frac{(1+\sqrt{5})\sigma_{\mathrm{min}}}{2L\sigma_{\mathrm{max}}^2}$. Then, there is $x_{\zeta}$ satisfies $\mathbb{E}\|\nabla f(x_\zeta)\| \le \epsilon$ for any $\epsilon>0$ provided that the iterations number $K$ satisfies
	\begin{align}
		K \ge \mathcal{O}\bigg(\frac{f(x_0)-f^*}{\epsilon^2} \bigg).
	\end{align}
	Moreover, the total number of SFO calls is at most in the order of $\mathcal{O}(n+n^{1/2}\epsilon^{-2})$.
\end{theorem}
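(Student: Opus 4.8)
The plan is to combine the standard SPIDER error recursion with a descent lemma for the \emph{preconditioned} update $x_{k+1}=x_k-\eta H_k v_k$, carefully carrying the spectral bounds $\sigma_{\min},\sigma_{\max}$ of Assumption~\ref{assum3} through both. First I would record the descent lemma: since each $f_i$ is $L$-smooth (Assumption~\ref{assum2}), so is $f$, and hence
$$f(x_{k+1}) \le f(x_k) - \eta\langle \nabla f(x_k), H_k v_k\rangle + \frac{L\eta^2}{2}\|H_k v_k\|^2.$$
I would then split $\langle \nabla f(x_k), H_k v_k\rangle = \langle v_k, H_k v_k\rangle + \langle \nabla f(x_k)-v_k, H_k v_k\rangle$, lower-bound the first term by $\sigma_{\min}\|v_k\|^2$ and bound $\|H_k v_k\|^2 \le \sigma_{\max}^2\|v_k\|^2$ via Assumption~\ref{assum3}, and control the cross term by Young's inequality with a free parameter $a$, turning it into a multiple of $\|v_k\|^2$ plus a multiple of the estimator error $\|\nabla f(x_k)-v_k\|^2$. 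Assumption~\ref{assum4} is exactly what lets me take conditional expectations cleanly, treating $H_k$ as fixed given the relevant history.

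The second ingredient is the SPIDER error recursion. Within an epoch (between two full-gradient steps), the increment $\nabla f_{\xi_k}(x_k)-\nabla f_{\xi_k}(x_{k-1})$ is conditionally unbiased for $\nabla f(x_k)-\nabla f(x_{k-1})$, so $L$-smoothness gives
$$\mathbb{E}\|v_k - \nabla f(x_k)\|^2 \le \mathbb{E}\|v_{k-1}-\nabla f(x_{k-1})\|^2 + \frac{L^2}{|\xi_k|}\mathbb{E}\|x_k - x_{k-1}\|^2.$$
Since the error resets to zero at each full-gradient step and $\|x_k - x_{k-1}\|^2 = \eta^2\|H_{k-1}v_{k-1}\|^2 \le \eta^2\sigma_{\max}^2\|v_{k-1}\|^2$, summing over an epoch of length $q$ and using the balance $q=|\xi_k|=\sqrt n$ (so that $q/|\xi_k|=1$) yields $\sum_k \mathbb{E}\|v_k-\nabla f(x_k)\|^2 \le L^2\eta^2\sigma_{\max}^2\sum_k\mathbb{E}\|v_k\|^2$. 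This is precisely where the choice $q=|\xi_k|=\sqrt n$ is essential: it makes the accumulated variance proportional to $\sum\|v_k\|^2$ with a bounded constant.

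Combining the two, I would sum the one-step inequality over $k=0,\dots,K-1$, telescope $f$, and substitute the variance bound to absorb every $\|\nabla f(x_k)-v_k\|^2$ term into $\sum\mathbb{E}\|v_k\|^2$. This leaves an inequality $f^* - f(x_0) \le -c(\eta)\sum_{k}\mathbb{E}\|v_k\|^2$, where after Young optimization the net coefficient takes the form $c(\eta)=\eta\sigma_{\min}-\tfrac12\eta\sigma_{\max}a-\tfrac12 L\eta^2\sigma_{\max}^2-\tfrac{1}{2a}L^2\eta^3\sigma_{\max}^3$. Eliminating $a$ by AM-GM and requiring $c(\eta)$ to be a positive multiple of $\sigma_{\min}^2/(L\sigma_{\max}^2)$ reduces to a quadratic balance in the normalized step $t=L\eta\sigma_{\max}^2/\sigma_{\min}$ whose positive root is the golden ratio $\tfrac{1+\sqrt5}{2}$; this is exactly the origin of $\eta=\frac{(1+\sqrt5)\sigma_{\min}}{2L\sigma_{\max}^2}$. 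With $c(\eta)>0$ I obtain $\frac{1}{K}\sum_k\mathbb{E}\|v_k\|^2 \le \frac{f(x_0)-f^*}{c(\eta)K}$, and then $\|\nabla f(x_k)\|^2 \le 2\|v_k\|^2 + 2\|\nabla f(x_k)-v_k\|^2$ together with the variance bound transfers this into a bound on $\frac{1}{K}\sum_k\mathbb{E}\|\nabla f(x_k)\|^2$. Jensen's inequality gives $\mathbb{E}\|\nabla f(x_\zeta)\| \le \sqrt{\frac{1}{K}\sum_k\mathbb{E}\|\nabla f(x_k)\|^2}$, so forcing the right-hand side below $\epsilon$ requires $K = \mathcal{O}((f(x_0)-f^*)/\epsilon^2)$.

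For the SFO complexity I would count per epoch: one full gradient ($n$ calls) plus $q-1$ stochastic steps at $2|\xi_k|$ calls each. With $\lceil K/q\rceil$ epochs and $q=|\xi_k|=\sqrt n$, the total is $\lceil K/q\rceil\, n + 2K|\xi_k| = \mathcal{O}(n+\sqrt n\,K)$, which with $K=\mathcal{O}(\epsilon^{-2})$ gives $\mathcal{O}(n+n^{1/2}\epsilon^{-2})$. The main obstacle I anticipate is the third step: simultaneously carrying the preconditioner spectral bounds through both the descent inequality and the variance recursion, and verifying that the Young parameter $a$ and the step size can be chosen so that $c(\eta)$ is genuinely a positive constant. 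This is where the precise golden-ratio value is forced, and where Assumption~\ref{assum4} is needed to legitimize the conditional-expectation manipulations on $H_kv_k$.
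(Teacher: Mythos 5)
Your proposal follows essentially the same route as the paper's own proof: the same smoothness-based descent lemma for the preconditioned step $x_{k+1}=x_k-\eta H_k v_k$, the same splitting of $\langle \nabla f(x_k), H_k v_k\rangle$ controlled by the spectral bounds of Assumption~\ref{assum3} plus Young's inequality, the same telescoped SPIDER variance recursion with the reset at full-gradient steps and the balance $q=|\xi_k|=\sqrt{n}$, the same positivity requirement on the net coefficient (the paper's $\beta^*$, your $c(\eta)$) which reduces to the golden-ratio quadratic $m^2-m-c>0$ and fixes the step size, and the same per-epoch SFO count $\lceil K/q\rceil\, n + K|\xi_k| = \mathcal{O}(n+\sqrt{n}\,\epsilon^{-2})$. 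The only deviations are cosmetic — you carry a free Young parameter $a$ where the paper effectively fixes $a=1$, and your handling of the cross term is if anything cleaner than the paper's (whose intermediate step applies Cauchy--Schwarz in the wrong direction before arriving at the same bound), while you also inherit the paper's own ambiguity about whether the normalized step equals the golden ratio or its reciprocal — so the proposal is correct and matches the paper's argument.
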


\begin{theorem}\label{thm: ssqnm}
Apply Algorithm \ref{SPIDER-SQN-M} to solve the problem (\ref{eq: P}), and suppose $z_{\zeta}$ is its output. Let $\alpha_k = \frac{2}{k+1}$, $q=|\xi_k|\equiv \sqrt{n}$, $\beta_k \equiv \frac{\sigma_{\mathrm{min}}}{(3+\sqrt{15})L\sigma_{\mathrm{max}}^2}$ and $\lambda_k \in [\beta_k, (1+\alpha_k)\beta_{k}]$. Then, there is $z_{\zeta}$ satisfies $\mathbb{E}\|\nabla f(z_\zeta)\| \le \epsilon$ for any $\epsilon>0$ provided that the iterations number $K$ satisfies
	\begin{align}
		K \ge \mathcal{O}\bigg(\frac{f(x_0)-f^*}{\epsilon^2} \bigg).
	\end{align}
	Moreover, the total number of SFO calls is at most in the order of $\mathcal{O}(n+n^{1/2}\epsilon^{-2})$.
\end{theorem}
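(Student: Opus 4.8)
The plan is to combine a one-step descent inequality on the auxiliary sequence $\{y_k\}$ with an epoch-wise control of the SPIDER estimation error, and then collapse everything into a telescoping sum whose decrement is proportional to $\sum_k \mathbb{E}\|\nabla f(z_k)\|^2$. Since Assumption~\ref{assum2} makes $f$ itself $L$-smooth, applying the descent lemma to the update $y_{k+1}=z_k-\beta_k H_k v_k$ gives
\[
f(y_{k+1}) \le f(z_k) - \beta_k \inner{\nabla f(z_k)}{H_k v_k} + \frac{L\beta_k^2}{2}\|H_k v_k\|^2 .
\]
Writing $v_k=\nabla f(z_k)+(v_k-\nabla f(z_k))$ and invoking the spectral bounds $\sigma_{\min}I\preceq H_k\preceq\sigma_{\max}I$ of Assumption~\ref{assum3}, I would lower-bound $\inner{\nabla f(z_k)}{H_k\nabla f(z_k)}\ge\sigma_{\min}\|\nabla f(z_k)\|^2$, bound $\|H_kv_k\|^2\le\sigma_{\max}^2\|v_k\|^2$, and split the cross terms with Young's inequality to reach an estimate of the form $f(y_{k+1})\le f(z_k)-c_1\beta_k\|\nabla f(z_k)\|^2+c_2\beta_k\mathbb{E}\|v_k-\nabla f(z_k)\|^2$ with explicit $c_1,c_2$ depending on $\sigma_{\min},\sigma_{\max},L$.

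For the variance I would exploit the martingale structure of the SPIDER estimator together with Assumption~\ref{assum4}, which supplies the unbiasedness and conditional independence needed to drop cross terms. Decomposing $v_k-\nabla f(z_k)$ into $v_{k-1}-\nabla f(z_{k-1})$ plus the mean-zero increment, and using $L$-smoothness of each $f_i$ on the sampled increment, yields the one-step recursion $\mathbb{E}\|v_k-\nabla f(z_k)\|^2 \le \mathbb{E}\|v_{k-1}-\nabla f(z_{k-1})\|^2 + \tfrac{L^2}{|\xi_k|}\mathbb{E}\|z_k-z_{k-1}\|^2$. Iterating from the start of the current epoch, where $v=\nabla f(z)$ holds exactly, I obtain $\mathbb{E}\|v_k-\nabla f(z_k)\|^2 \le \tfrac{L^2}{|\xi_k|}\sum_{j}\mathbb{E}\|z_j-z_{j-1}\|^2$, the sum running over the at most $q$ iterations of the epoch. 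The choice $|\xi_k|=q=\sqrt{n}$ is precisely what keeps this prefactor small enough for the telescoping to close.

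The crucial and most delicate step is translating $\|z_j-z_{j-1}\|$ back into the descent directions $\|d_j\|$, and this is where I expect the main obstacle to lie. From the momentum recursions of Algorithm~\ref{SPIDER-SQN-M} I would derive the identities $x_{k+1}-y_{k+1}=(1-\alpha_{k+1})(x_k-y_k)+(\beta_k-\lambda_k)d_k$ and $z_{k+1}-z_k=\alpha_{k+2}(x_{k+1}-y_{k+1})-\beta_k d_k$. Here the prescribed range $\lambda_k\in[\beta_k,(1+\alpha_k)\beta_k]$ guarantees $|\beta_k-\lambda_k|\le\alpha_k\beta_k$, which is exactly the slack that keeps the coupling sequence $\|x_k-y_k\|$ uniformly controlled and lets me bound $\|z_j-z_{j-1}\|$ by a constant multiple of $\beta_{j-1}\|d_{j-1}\|$. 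Verifying that this coupling does not degrade the epoch-wise variance bound, and doing the bookkeeping uniformly in $k$ despite the diminishing $\alpha_k=\tfrac{2}{k+1}$, is the part that requires the most care.

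Finally, summing the descent inequality over $k=0,\dots,K-1$, substituting the epoch-wise variance bound, and using $\|d_k\|^2\le\sigma_{\max}^2\|v_k\|^2$ turns the telescoped right-hand side into a weighted combination of $\mathbb{E}\|\nabla f(z_k)\|^2$ and the accumulated direction terms. Choosing $\beta_k\equiv\frac{\sigma_{\min}}{(3+\sqrt{15})L\sigma_{\max}^2}$ makes the net coefficient of the accumulated variance nonpositive; this is the step that forces the quadratic $x^2-6x-6=0$ with root $x=3+\sqrt{15}$, leaving a clean decrement in $f(y_0)-f^*=f(x_0)-f^*$. The result is $\frac{1}{K}\sum_{k=0}^{K-1}\mathbb{E}\|\nabla f(z_k)\|^2 \le \mathcal{O}\!\big(\tfrac{f(x_0)-f^*}{\beta K}\big)$, and since $z_\zeta$ is drawn uniformly, Jensen's inequality $\mathbb{E}\|\nabla f(z_\zeta)\|\le\sqrt{\mathbb{E}\|\nabla f(z_\zeta)\|^2}$ yields $\mathbb{E}\|\nabla f(z_\zeta)\|\le\epsilon$ once $K=\mathcal{O}(\epsilon^{-2})$. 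For the oracle count, each epoch of $q=\sqrt{n}$ inner iterations costs one full gradient ($n$ calls) plus $q|\xi_k|=n$ stochastic calls, i.e.\ $\mathcal{O}(n)$ per epoch; with $K/q=\mathcal{O}(\epsilon^{-2}/\sqrt{n})$ epochs the total is $\mathcal{O}(n+n^{1/2}\epsilon^{-2})$, as claimed.
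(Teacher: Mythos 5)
Your proposal reproduces several correct ingredients of the paper's argument (the SPIDER variance recursion on the $z$-sequence with $q=|\xi_k|=\sqrt n$, the identity $x_{k+1}-y_{k+1}=(1-\alpha_{k+1})(x_k-y_k)+(\beta_k-\lambda_k)d_k$, the role of $\lambda_k\in[\beta_k,(1+\alpha_k)\beta_k]$, the quadratic giving $3+\sqrt{15}$, Jensen's inequality, and the SFO count), but it has a structural gap at its very first step. You anchor the descent lemma at the $y$-update, $f(y_{k+1})\le f(z_k)-\beta_k\inner{\nabla f(z_k)}{H_kv_k}+\tfrac{L\beta_k^2}{2}\|H_kv_k\|^2$, and then claim everything "collapses into a telescoping sum" ending at $f(y_0)-f^*$. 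But this inequality connects $f(y_{k+1})$ to $f(z_k)$, not to $f(y_k)$, and since $z_k=(1-\alpha_{k+1})y_k+\alpha_{k+1}x_k$ with $f$ nonconvex, there is no free inequality relating $f(z_k)$ to $f(y_k)$; closing the chain requires either tracking $f(x_k)$ as well (via the smoothness bound on convex combinations, which injects extra $\|x_k-y_k\|^2$ terms) or abandoning the $y$-anchor. The paper avoids this entirely by applying the descent lemma to the $x$-update $x_k=x_{k-1}-\lambda_{k-1}H_{k-1}v_{k-1}$, which telescopes in $f(x_k)$ directly; the price is a mismatch between $\nabla f(x_{k-1})$ and $v_{k-1}\approx\nabla f(z_{k-1})$, which is then repaired by the bound $\|\nabla f(x_{k-1})-v_{k-1}\|\le L(1-\alpha_k)\|x_{k-1}-y_{k-1}\|+\|\nabla f(z_{k-1})-v_{k-1}\|$ together with the coupling lemma. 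Your proposal never supplies the missing link, so the telescoping you rely on does not close as stated.

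The second gap is your claim that the coupling lets you "bound $\|z_j-z_{j-1}\|$ by a constant multiple of $\beta_{j-1}\|d_{j-1}\|$". This is false: $z_j-z_{j-1}=-\beta_{j-1}d_{j-1}+\alpha_{j+1}(x_j-y_j)$, and $x_j-y_j$ is the $\Gamma$-weighted accumulation $\Gamma_j\sum_{t\le j}\frac{\lambda_{t-1}-\beta_{t-1}}{\Gamma_t}H_{t-1}v_{t-1}$ of \emph{all} past directions, which is not controlled by the current $\|d_{j-1}\|$ (it can be large when $\|d_{j-1}\|$ is tiny). The paper's Lemma on the momentum iterates keeps this history sum intact, and disposing of it is exactly the heavy bookkeeping in the bound of the accumulation term $T$: exchanging the order of summation, using $\sum_{t\ge k}\Gamma_t\le 2/k$, $\alpha_{i+2}^2\Gamma_{i+1}\lesssim((\tau(k)-1)q+1)^{-3}$, and $\lambda_t-\beta_t\le\alpha_t\beta$ so that $\frac{(\lambda_t-\beta_t)^2}{\alpha_{t+1}\Gamma_{t+1}}\lesssim(t+1)\beta^2$. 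You explicitly defer this step ("requires the most care") rather than carry it out, but it is the crux of why the momentum variant retains the $\mathcal{O}(n+n^{1/2}\epsilon^{-2})$ complexity; without it, and without the corrected descent anchor, the proposal is an outline of the right shape rather than a proof.
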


\begin{theorem}\label{thm: ssqnmer}
Apply the SpiderSQN with either epochwise-restart momentum (SpiderSQNMER) or epochwise-diminishing momentum (SpiderSQNMED) to solve the problem (\ref{eq: P}), and suppose $z_{\zeta}$ is its output. Let $\alpha_k$ defined as Eqs.~(\ref{3.2}) and (\ref{3.3}) for SpiderSQNMER and SpiderSQNMED, respectively. Set $q=|\xi_k|\equiv \sqrt{n}$, $\beta_k \equiv \frac{\sigma_{\mathrm{min}}}{(3+\sqrt{15})L\sigma_{\mathrm{max}}^2}$ and $\lambda_k \in [\beta_k, (1+\alpha_k)\beta_{k}]$.
Then, for both algorithms there is $x_{\zeta}$ satisfies $\mathbb{E}\|\nabla f(x_\zeta)\| \le \epsilon$ for any $\epsilon>0$ provided that the iterations number $K$ satisfies
	\begin{align}
		K \ge \mathcal{O}\bigg(\frac{f(x_0)-f^*}{\epsilon^2} \bigg).
	\end{align}
	Moreover, the total number of SFO calls is at most in the order of $\mathcal{O}(n+n^{1/2}\epsilon^{-2})$.
\end{theorem}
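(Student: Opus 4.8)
The plan is to reuse the proof of Theorem~\ref{thm: ssqnm} almost verbatim, since Algorithm~\ref{SPIDER-SQN-M} is run unchanged and only the rule generating $\alpha_k$ is replaced. First I would isolate the ingredients of that proof that are independent of the particular momentum sequence: (i) the SPIDER variance bound on $\E\|v_k-\nabla f(z_k)\|^2$, which follows from the recursion $v_k=\nabla f_{\xi_k}(z_k)-\nabla f_{\xi_k}(z_{k-1})+v_{k-1}$ together with $L$-smoothness (Assumption~\ref{assum2}) and the full-gradient restart every $q$ steps; (ii) the two-sided Hessian bounds $\sigma_{\min}I\preceq H_k\preceq\sigma_{\max}I$ and the measurability relation of Assumptions~\ref{assum3}--\ref{assum4}, which control $\langle\nabla f(z_k),d_k\rangle$ and $\|d_k\|$ in terms of $\|v_k\|$; and (iii) the per-step descent obtained by applying $L$-smoothness to $y_{k+1}=z_k-\beta_k d_k$. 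None of these three ingredients references $\alpha_k$, so they transfer directly to both new schemes.

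Next I would locate the only places where $\alpha_k$ actually enters the estimate. It appears through the convex combination $z_k=(1-\alpha_{k+1})y_k+\alpha_{k+1}x_k$ and through the constraint $\lambda_k\in[\beta_k,(1+\alpha_k)\beta_k]$. For the step-size condition it suffices to retain a uniform bound $\lambda_k\le 3\beta_k$, and since $\alpha_k\le 2$ for both the epochwise-restart rule~(\ref{3.2}) and the epochwise-diminishing rule~(\ref{3.3}), this bound holds exactly as in the vanilla case; the constant $\beta_k\equiv\sigma_{\min}/((3+\sqrt{15})L\sigma_{\max}^2)$ was chosen precisely so that the resulting quadratic step-size condition is met under $\lambda_k\le 3\beta_k$. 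The remaining appearance of $\alpha_k$ is in the telescoping of the momentum potential, and this is the part that must be checked for each scheme separately.

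The key observation making both schemes work is that the epoch structure of the momentum coincides with the restart structure of SPIDER: $v_k$ is recomputed as a full (or large-batch) gradient exactly when $\bmod(k,q)=0$, and at those same indices the momentum either restarts (scheme~(\ref{3.2})) or switches to a new constant value (scheme~(\ref{3.3})). I would therefore carry out the telescoping \emph{within} one epoch of length $q$ and then sum over the $K/q$ epochs. Inside an epoch, scheme~(\ref{3.2}) reproduces the vanilla sequence $2/(j+1)$ re-indexed from the start of the epoch, so the monotone relation $\frac{1-\alpha_{k+1}}{\alpha_{k+1}^2}\le\frac{1}{\alpha_k^2}$ used for Theorem~\ref{thm: ssqnm} holds verbatim; for scheme~(\ref{3.3}) the coefficient is constant within an epoch, so the potential telescopes trivially. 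Across the $K/q$ epochs, scheme~(\ref{3.3}) diminishes like $2/(\lceil k/q\rceil+1)$, i.e. the vanilla rate lifted to epoch level, which is enough to close the sum.

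The main obstacle I anticipate is the boundary bookkeeping at the epoch resets, where two points need care. First, for scheme~(\ref{3.2}) the reset sets $\alpha_{k+1}=2$ at the start of each epoch, so $z_k=2x_k-y_k$ becomes an \emph{extrapolation} rather than a convex combination; I would verify that the descent step invokes only $L$-smoothness at the point $z_k$ and never the convex-combination structure, so this is harmless. Second, one must confirm that the potential is never \emph{anti}-telescoping across a reset. Both are settled by the monotonicity of $g(\alpha):=(1-\alpha)/\alpha^2$ on $(0,2]$: since $g'(\alpha)=\alpha^{-3}(\alpha-2)\le 0$, the jump to a larger $\alpha$ can only shrink (indeed render negative) the coefficient multiplying the accumulated function-value gap, so $\frac{1-\alpha_{k+1}}{\alpha_{k+1}^2}\le\frac{1}{\alpha_k^2}$ in fact persists across boundaries as well as within epochs. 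The whole argument then reduces to $K/q$ copies of the single-epoch estimate; summing the per-epoch descent and invoking Assumption~\ref{assum1} yields $K\ge\cO((f(x_0)-f^*)/\epsilon^2)$, and since each epoch costs one full gradient plus $q$ two-point SPIDER evaluations with $|\xi_k|=q=\sqrt{n}$---a count that does not depend on $\alpha_k$---the total SFO complexity is $\cO(n+n^{1/2}\epsilon^{-2})$, identical to Theorems~\ref{thm: ssqn} and~\ref{thm: ssqnm}.
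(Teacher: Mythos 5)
Your proposal follows essentially the same route as the paper's own proof: for the epochwise-diminishing scheme it re-derives the Theorem~\ref{thm: ssqnm} estimates with the iteration index $k$ replaced by the epoch index $\lceil k/q\rceil$ (constant $\alpha_k$ within an epoch, vanilla-rate decay across epochs), and for the epochwise-restart scheme it applies the single-epoch vanilla analysis to each restart period and sums over the $K/q$ epochs, exploiting exactly the alignment of the momentum restarts with the SPIDER full-gradient computations that the paper also relies on. The only difference is cosmetic bookkeeping at epoch boundaries: the paper sidesteps the $\alpha=2$ extrapolation step by excluding boundary iterates from the output set and telescoping via $x_{(t+1)q-1}=x_{(t+1)q}$, whereas you control it directly through the monotonicity of $(1-\alpha)/\alpha^2$ on $(0,2]$ — both devices resolve the same issue and lead to the identical bound $K\ge\mathcal{O}((f(x_0)-f^*)/\epsilon^2)$ and SFO complexity $\mathcal{O}(n+n^{1/2}\epsilon^{-2})$.
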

\begin{remark}
There are two differences between Algorithm \ref{thm: ssqn} and Algorithm \ref{thm: ssqnm}\&\ref{thm: ssqnmer}: 1) Algorithm \ref{thm: ssqnm}\&\ref{thm: ssqnmer} introduce an extra parameter, \ie~$\alpha_k$, because of using momentum scheme; 2) the choice of $\beta_k$ in Algorithm \ref{thm: ssqnm}\&\ref{thm: ssqnmer} are different from that of $\eta$ in Algorithm \ref{thm: ssqn} (note that $\beta_k$ plays a same role as $\eta$). Algorithm \ref{thm: ssqnm}\&\ref{thm: ssqnmer} are the same except for the choice of $\alpha_k$ due to using different momentum schemes. Moreover, given required conditions in Algorithm \ref{thm: ssqn,thm: ssqnm,thm: ssqnmer}, the SFO complexity of Algorithm \ref{SPIDER-SQN} and its variants with different momentum schemes to satisfy the $\epsilon$-first-order stationary condition are $\mathcal{O}(n+n^{1/2}\epsilon^{-2})$, which matches the state-of-the-art results of first-order stochastic methods.
\end{remark}
\begin{figure*}[!t]
	\centering
	\begin{subfigure}{0.3\linewidth}
		\includegraphics[width=\linewidth]{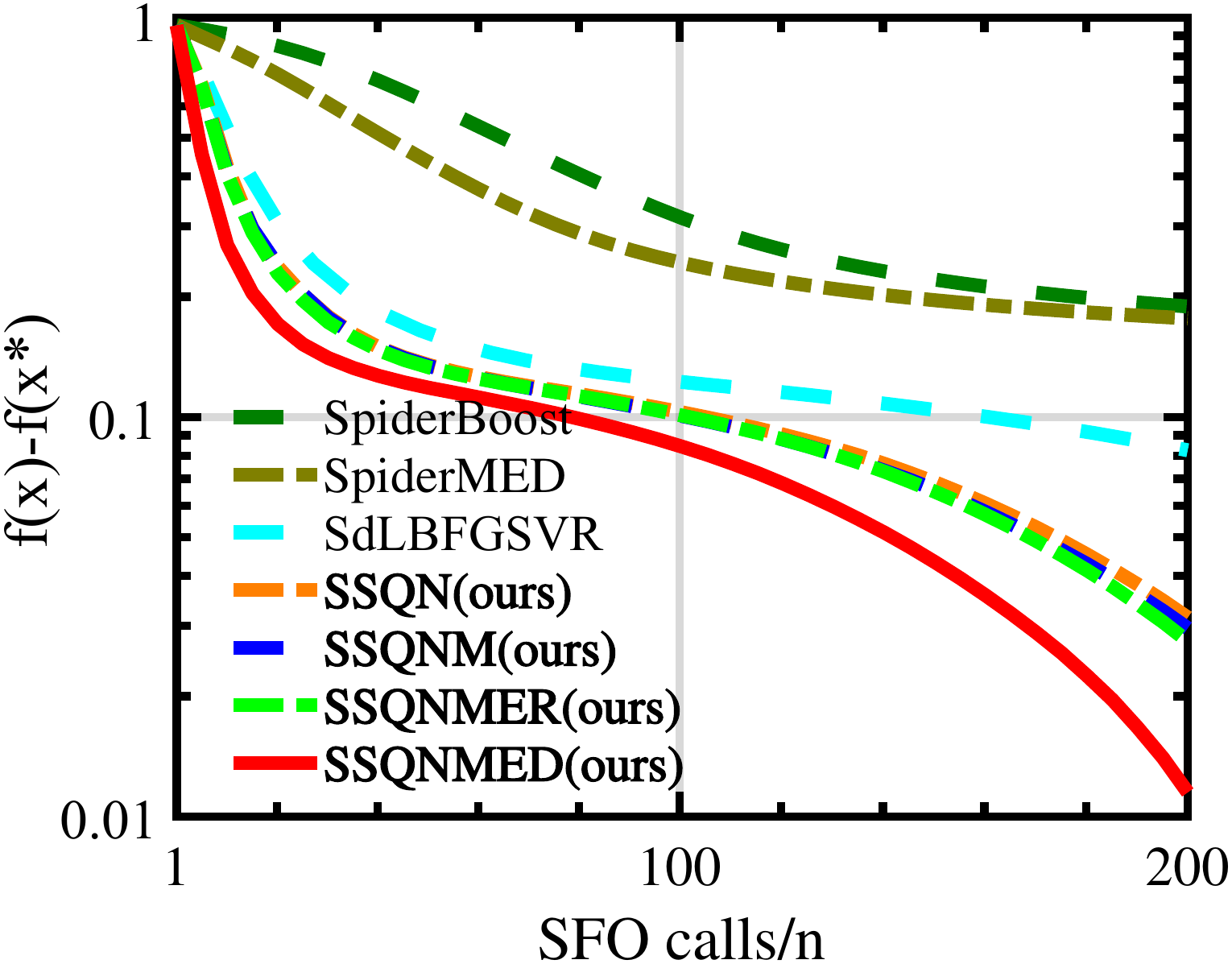}
		\caption{Data: a9a}
	\end{subfigure}
\begin{subfigure}{0.3\linewidth}
		\includegraphics[width=\linewidth]{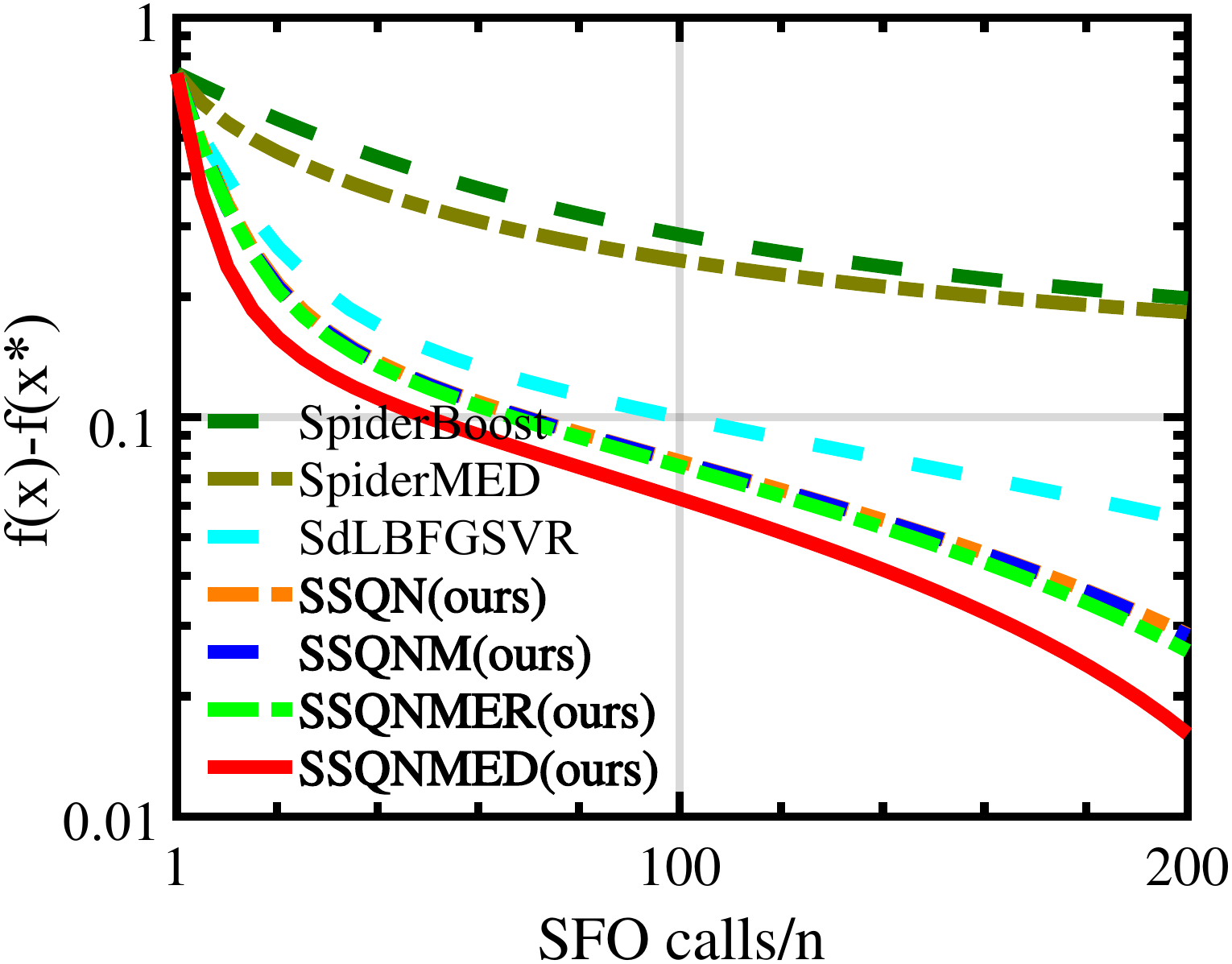}
		\caption{Data: w8a}
	\end{subfigure}
\begin{subfigure}{0.3\linewidth}
		\includegraphics[width=\linewidth]{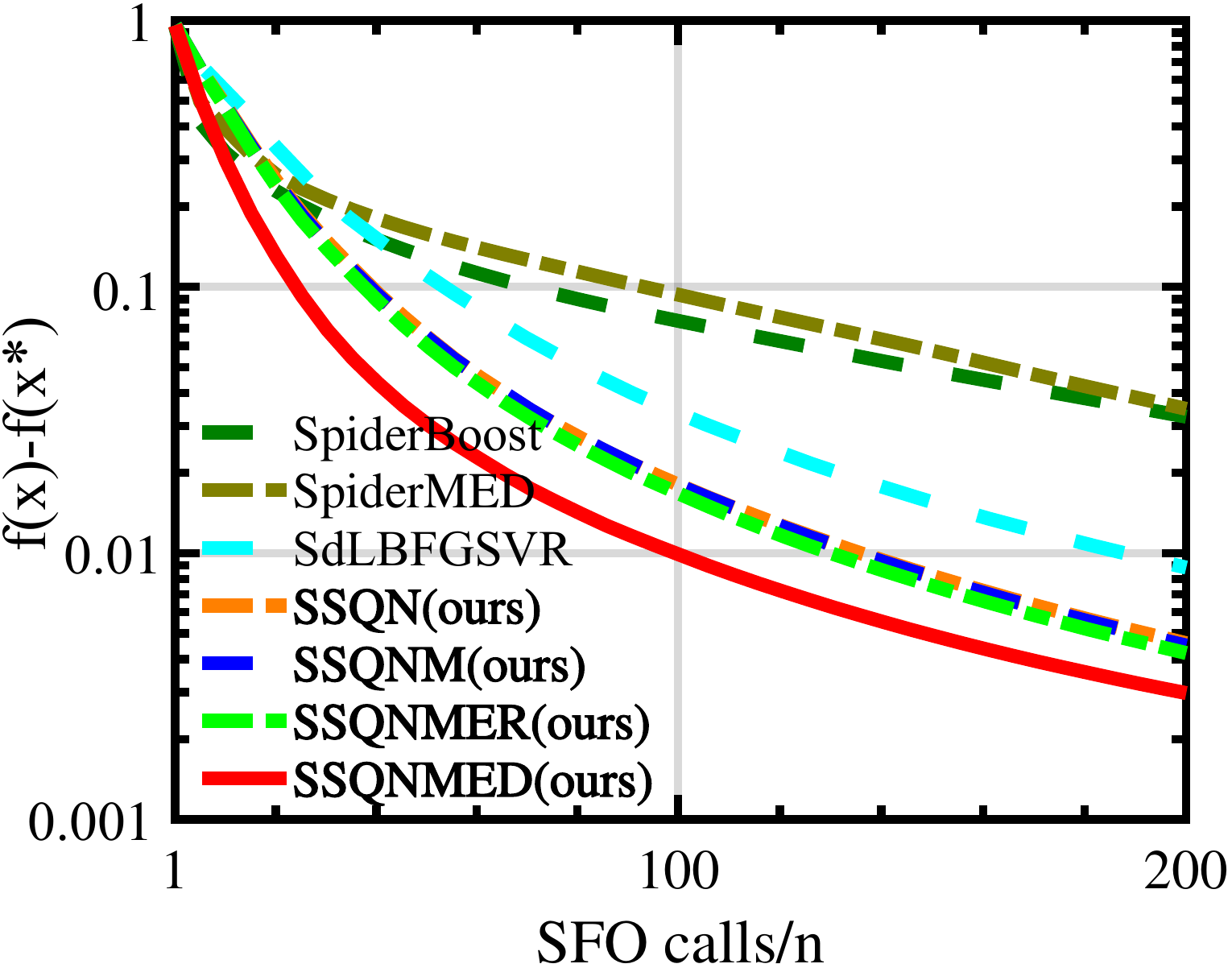}
		\caption{Data: mnist}
	\end{subfigure}%

	\begin{subfigure}{0.3\linewidth}
		\includegraphics[width=\linewidth]{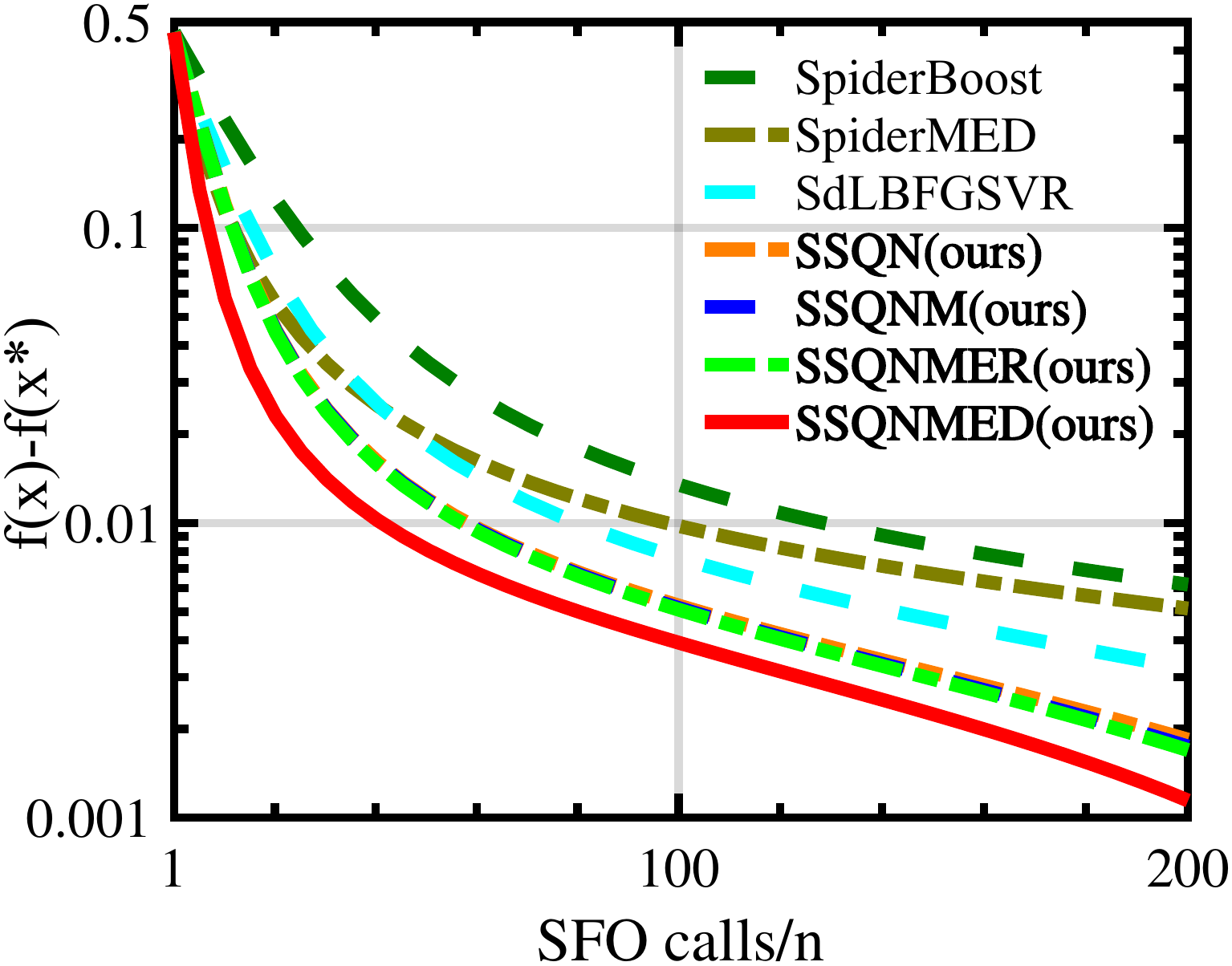}
		\caption{Data: ijcnn1}
	\end{subfigure}%
	\begin{subfigure}{0.3\linewidth}
		\includegraphics[width=\linewidth]{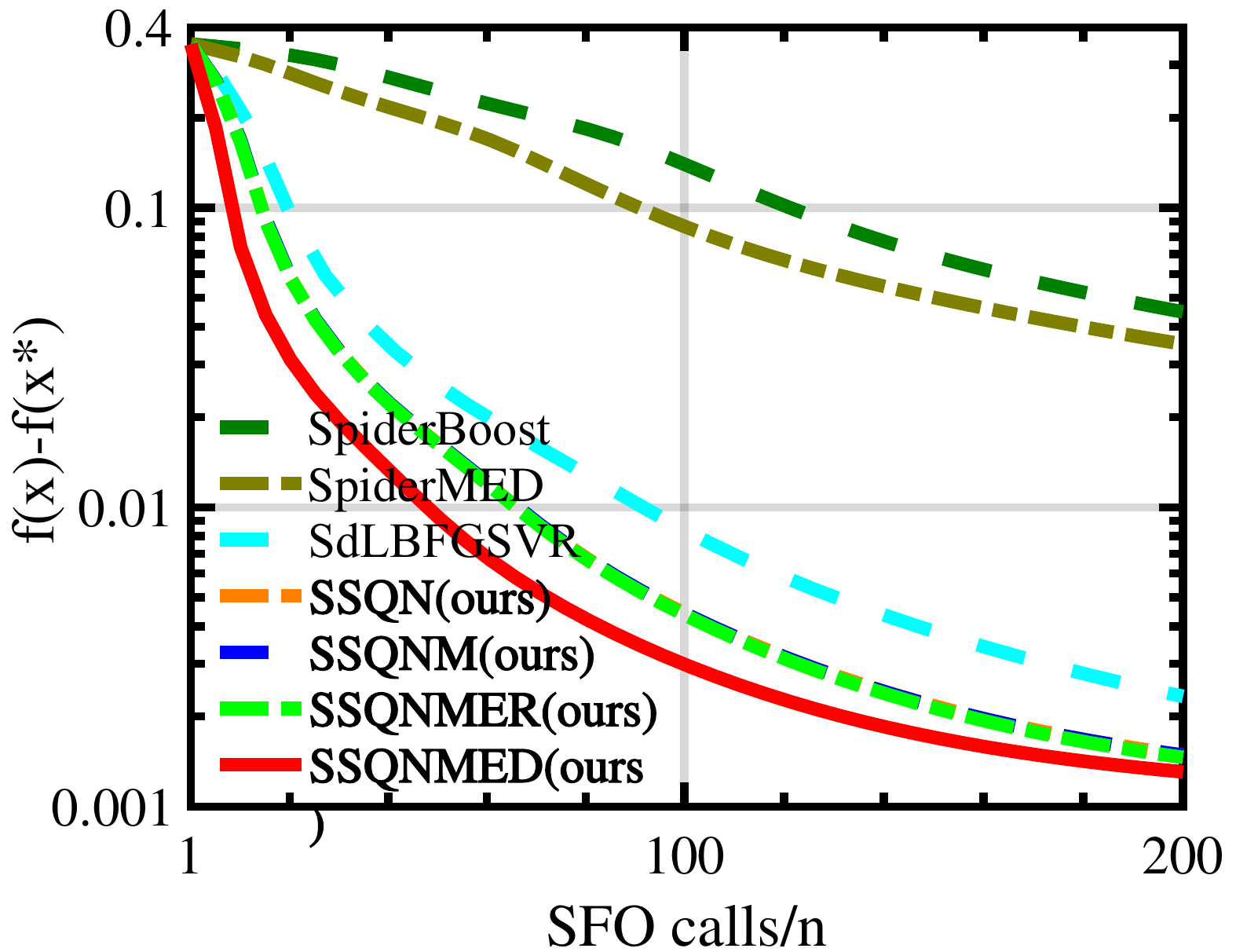}
		\caption{Data: covtype}
	\end{subfigure}%
	\begin{subfigure}{0.3\linewidth}
		\includegraphics[width=\linewidth]{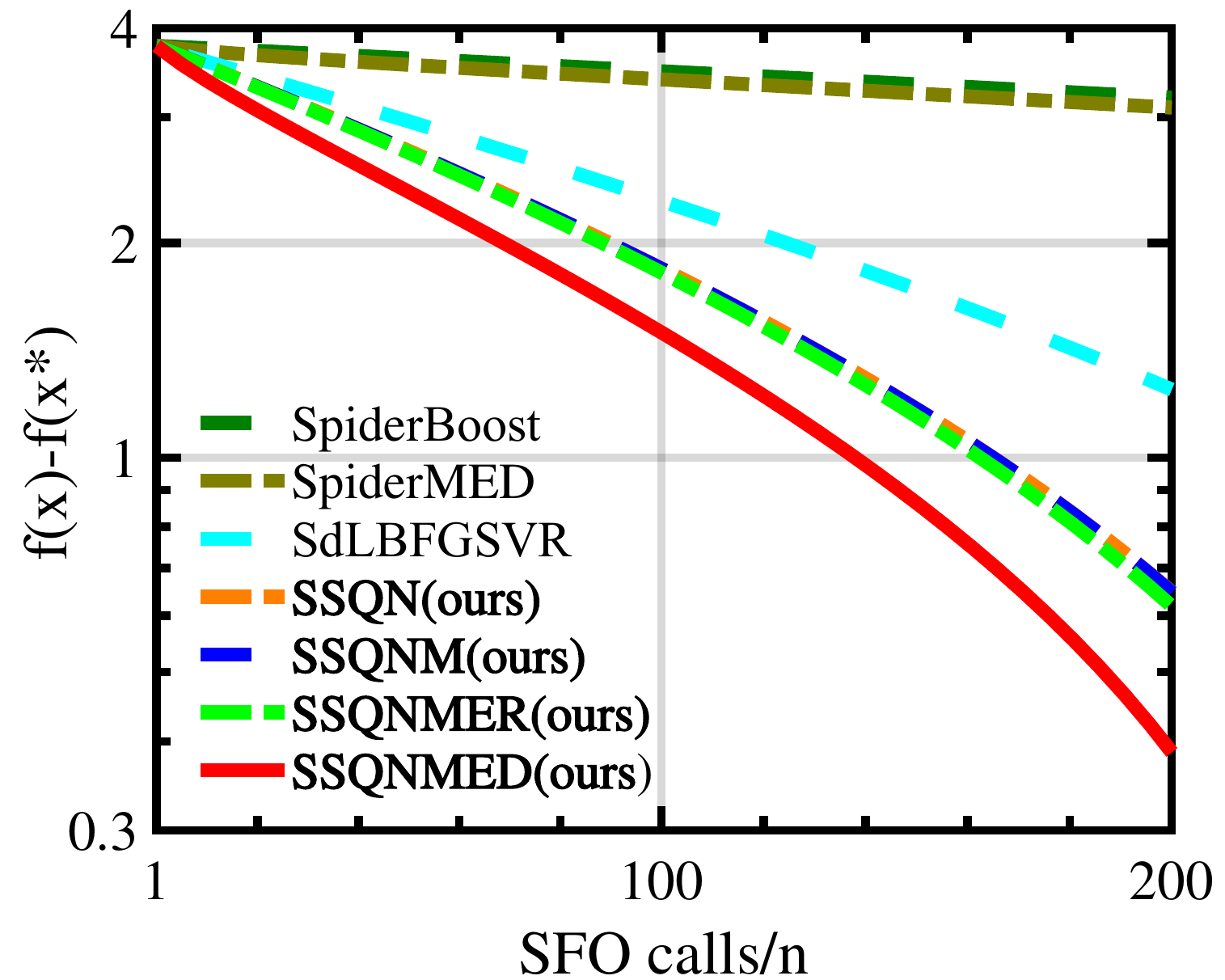}
		\caption{Data: synthetic data}
	\end{subfigure}%
	\caption{Comparison among algorithms for solving nonconvex SVM problems.}
\label{Experment1}
\end{figure*}
\subsection{Convergence Analysis of Online Faster SQN Method}
To study the SFO complexity of the online SpiderSQN-type of algorithms we let Assumptions \ref{assum1} to \ref{assum4} hold, and make an extra standard assumption (Algorithm \ref{assum5}).
{\begin{assumption}\label{assum5}
	There exists a constant $\sigma_1>0$ such that for all $x\in \mathbb{R}^d$ and all random samples $u\sim \mathbb{P}$, it holds that $\mathbb{E}_{u\sim \mathbb{P}} \|\nabla f_u(x) - \nabla f(x) \|^2 \leqslant \sigma_1^2$.
\end{assumption}
Assumption \ref{assum5} shows that the $\nabla f_u(x)$ is an unbiased estimator of $\nabla f(x)$ with bounded variance. Assumption \ref{assum5} is a standard assumption in online optimization analysis \cite{zhou2019momentum} and is for online case only.}
\begin{theorem}\label{thm: ssqn-online}
	Let additional Algorithm \ref{assum5} hold. Apply Algorithm \ref{Spider-SQN-online} to solve the online optimization problem (\ref{3.4}). Choose any desired accuracy $\epsilon>0$ and set parameters as
\begin{align}
		q=|\xi_k| = \sqrt{|\xi_0|} \equiv \sqrt{\left( \frac{\eta\sigma_{\mathrm{max}}}{ \beta^*} + 2 + \frac{ L^2 \eta^3\sigma_{\mathrm{max}}^3}{ \beta^*} \right)\frac{2\sigma_1^2}{\epsilon^2}}, \nonumber
	\end{align}
where $\beta^* = \frac{\eta\sigma_{\mathrm{min}} }{2} - \frac{L\eta^2 \sigma_{\mathrm{max}}^2}{2}- \frac{\eta^3\sigma_{\mathrm{max}}^3L^2 }{2 }$, and let $\eta \equiv \frac{(1+\sqrt{5})\sigma_{\mathrm{min}}}{2L\sigma_{\mathrm{max}}^2}$. Then, the output $x_{\zeta}$ of this algorithm satisfies $\mathbb{E}\|\nabla f(x_\zeta)\| \le \epsilon$ given that the total number of iterations $K$ satisfies
	\begin{align}
		K \ge \mathcal{O}\bigg(\frac{f(x_0)-f^*}{\epsilon^2} \bigg).
	\end{align}
	Moreover, the SFO complexity is in the order of $\mathcal{O}(\epsilon^{-3})$.
\end{theorem}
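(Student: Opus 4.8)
The plan is to mirror the SPIDER variance-reduction analysis while threading the quasi-Newton preconditioner $H_k$ through every step, treating the online case exactly as the finite-sum case of Theorem~\ref{thm: ssqn} except that the periodic full gradient is replaced by the large mini-batch $\nabla f_{\xi_0}$, which injects a controllable variance of order $\sigma_1^2/|\xi_0|$ at the start of each epoch via Assumption~\ref{assum5}. The two load-bearing ingredients are (i) a variance bound on the estimator $v_k$ relative to $\nabla f(x_k)$ that accumulates only slowly inside an epoch, and (ii) a descent inequality that exploits the spectral bounds $\sigma_{\mathrm{min}}I\preceq H_k\preceq\sigma_{\mathrm{max}}I$ together with the measurability in Assumption~\ref{assum4}.

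First I would establish the variance recursion. Writing the SPIDER update as
\begin{align}
v_k - \nabla f(x_k) = \big(v_{k-1}-\nabla f(x_{k-1})\big) + \big(\nabla f_{\xi_k}(x_k) - \nabla f_{\xi_k}(x_{k-1}) - \nabla f(x_k) + \nabla f(x_{k-1})\big),\nonumber
\end{align}
the bracketed correction has zero conditional mean, so its cross term with the first summand vanishes in expectation; bounding its conditional second moment via $L$-smoothness of the individual $f_i$ (Assumption~\ref{assum2}) yields
\begin{align}
\mathbb{E}\|v_k - \nabla f(x_k)\|^2 \le \mathbb{E}\|v_{k-1}-\nabla f(x_{k-1})\|^2 + \frac{L^2}{|\xi_k|}\,\mathbb{E}\|x_k - x_{k-1}\|^2.\nonumber
\end{align}
At an epoch boundary $\mathrm{mod}(k,q)=0$, Assumption~\ref{assum5} gives $\mathbb{E}\|v_k-\nabla f(x_k)\|^2 \le \sigma_1^2/|\xi_0|$, so telescoping over at most $q$ inner steps controls the estimator error by $\sigma_1^2/|\xi_0|$ plus an accumulated sum of successive-iterate displacements $\|x_j - x_{j-1}\|^2$.

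Next I would prove the one-step descent. Applying $L$-smoothness of $f$ to $x_{k+1}=x_k-\eta H_k v_k$, splitting $v_k = \nabla f(x_k) + (v_k-\nabla f(x_k))$, and using $\inner{\nabla f(x_k)}{H_k\nabla f(x_k)}\ge \sigma_{\mathrm{min}}\|\nabla f(x_k)\|^2$ together with $\|H_kv_k\|\le \sigma_{\mathrm{max}}\|v_k\|$ from Assumption~\ref{assum3}, plus Young's inequality on the cross term $\inner{\nabla f(x_k)}{H_k(v_k-\nabla f(x_k))}$, produces a per-iteration bound of the form
\begin{align}
\mathbb{E}[f(x_{k+1})] \le \mathbb{E}[f(x_k)] - c_1\,\mathbb{E}\|\nabla f(x_k)\|^2 + c_2\,\mathbb{E}\|v_k-\nabla f(x_k)\|^2,\nonumber
\end{align}
with $c_1,c_2$ depending on $\eta,L,\sigma_{\mathrm{min}},\sigma_{\mathrm{max}}$. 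The critical manipulation is to substitute $\|x_j-x_{j-1}\|^2=\eta^2\|H_jv_j\|^2\le \eta^2\sigma_{\mathrm{max}}^2\|v_j\|^2$ into the variance recursion and feed it back, which is precisely what generates the cubic-in-$\eta$ terms appearing in $\beta^*$. Summing over $k=0,\dots,K-1$ and telescoping $f$, the displacement terms accumulated by the variance recursion are absorbed exactly because $q=|\xi_k|=\sqrt{|\xi_0|}$ matches the epoch length to the batch size, leaving
\begin{align}
\beta^*\sum_{k=0}^{K-1}\mathbb{E}\|\nabla f(x_k)\|^2 \le f(x_0)-f^* + C\,\frac{\sigma_1^2}{|\xi_0|}\,K,\nonumber
\end{align}
with $\beta^*=\frac{\eta\sigma_{\mathrm{min}}}{2}-\frac{L\eta^2\sigma_{\mathrm{max}}^2}{2}-\frac{\eta^3\sigma_{\mathrm{max}}^3L^2}{2}$. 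Dividing by $\beta^*K$, invoking the uniform random index $\zeta$, and choosing $|\xi_0|=q^2=\Theta(\sigma_1^2/\epsilon^2)$ so the residual variance term is $O(\epsilon^2)$, Jensen's inequality upgrades $\mathbb{E}\|\nabla f(x_\zeta)\|^2\le\epsilon^2$ to $\mathbb{E}\|\nabla f(x_\zeta)\|\le\epsilon$ once $K=\Omega((f(x_0)-f^*)/\epsilon^2)$. The SFO count then follows by multiplying the per-epoch cost $|\xi_0|+2(q-1)|\xi_k|=O(\epsilon^{-2})$ by the $K/q=O(\epsilon^{-1})$ epochs, giving $\mathcal{O}(\epsilon^{-3})$.

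The main obstacle I anticipate is keeping $\beta^*$ strictly positive: the displacement feedback forces a balance among the linear, quadratic, and cubic powers of $\eta$, and verifying that the prescribed $\eta=\frac{(1+\sqrt{5})\sigma_{\mathrm{min}}}{2L\sigma_{\mathrm{max}}^2}$ (a golden-ratio-type root of the governing polynomial) indeed renders $\beta^*>0$ while still admitting the clean $\sqrt{|\xi_0|}$ batch/epoch scaling is the delicate part. A secondary subtlety is that the descent direction is $H_kv_k$ rather than $v_k$, so the spectral constants $\sigma_{\mathrm{min}},\sigma_{\mathrm{max}}$ enter both the curvature lower bound and the step-length upper bound simultaneously, and Assumption~\ref{assum4} must be deployed carefully to ensure the cross terms involving $H_k(v_k-\nabla f(x_k))$ behave correctly under the conditional expectation.
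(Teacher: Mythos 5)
Your high-level plan coincides with the paper's: the paper also proves this theorem by re-running the finite-sum analysis of Theorem~\ref{thm: ssqn}, with the single change that telescoping Lemma~\ref{lemma: Zhang} from an epoch start now leaves a residual $\sigma_1^2/|\xi_0|$ by Assumption~\ref{assum5} (its Lemma~\ref{aux: 5}); that extra term is carried through Eq.~(\ref{eq: 1}) to the bound (\ref{final3}), and then $K$ and $|\xi_0|$ are chosen so that each of the two resulting terms is at most $\epsilon^2/2$. Your parameter choices and the SFO count $(K+q)|\xi_0|/q + K|\xi_k| = \mathcal{O}(\epsilon^{-3})$ also agree. The gap is in your one-step descent and the ``absorption'' it is supposed to support. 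You split $v_k = \nabla f(x_k) + e_k$ with $e_k := v_k - \nabla f(x_k)$, so your per-iteration bound has its only negative term in $\|\nabla f(x_k)\|^2$. But the variance recursion, after substituting $\|x_{j+1}-x_j\|^2 \le \eta^2\sigma_{\mathrm{max}}^2\|v_j\|^2$, feeds back sums of $\|v_j\|^2$, not of $\|\nabla f(x_j)\|^2$. With your inequality there is nothing to absorb them: you must write $\|v_j\|^2 \le 2\|\nabla f(x_j)\|^2 + 2\|e_j\|^2$, which re-creates error terms that again invoke the variance recursion, and this loop closes only under a contraction condition roughly of the form $2L^2\eta^2\sigma_{\mathrm{max}}^2 q/|\xi_k| < 1$. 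At the prescribed step size this quantity equals $\tfrac{(1+\sqrt{5})^2}{2}(\sigma_{\mathrm{min}}/\sigma_{\mathrm{max}})^2$, which exceeds $1$ unless $\sigma_{\mathrm{min}}/\sigma_{\mathrm{max}}$ is well below $1/2$, and even when the loop does close the surviving coefficient is not the stated $\beta^*$. Hence your displayed inequality $\beta^*\sum_k \mathbb{E}\|\nabla f(x_k)\|^2 \le f(x_0)-f^* + CK\sigma_1^2/|\xi_0|$, with $\beta^* = \frac{\eta\sigma_{\mathrm{min}}}{2}-\frac{L\eta^2\sigma_{\mathrm{max}}^2}{2}-\frac{\eta^3\sigma_{\mathrm{max}}^3L^2}{2}$, does not follow from the descent bound you derived.

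The paper sidesteps this by the opposite split, $\nabla f(x_k) = v_k + (\nabla f(x_k)-v_k)$, so its descent inequality keeps the negative term in $\|v_k\|^2$, namely $-(\frac{\eta\sigma_{\mathrm{min}}}{2}-\frac{L\eta^2\sigma_{\mathrm{max}}^2}{2})\|v_k\|^2 + \frac{\eta\sigma_{\mathrm{max}}}{2}\|e_k\|^2$; the fed-back $\|v_j\|^2$ sums are then absorbed term-by-term into these negatives --- which is exactly where $\beta^*$ comes from --- and gradient norms appear only at the sampled output index through $\mathbb{E}\|\nabla f(x_\zeta)\|^2 \le 2\mathbb{E}\|\nabla f(x_\zeta)-v_\zeta\|^2 + 2\mathbb{E}\|v_\zeta\|^2$ (Eq.~(\ref{final_0})). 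If you want to keep your gradient-norm formulation, replace Young's inequality on the cross term by the polarization identity $\inner{\nabla f(x_k)}{H_kv_k} = \frac{1}{2}\left[\inner{\nabla f(x_k)}{H_k\nabla f(x_k)} + \inner{v_k}{H_kv_k} - \inner{e_k}{H_ke_k}\right]$, valid since $H_k$ is symmetric positive definite; this retains both $-\frac{\eta\sigma_{\mathrm{min}}}{2}\|\nabla f(x_k)\|^2$ and a negative multiple of $\|v_k\|^2$, and the latter absorbs the feedback so that your argument goes through. The remaining ingredients of your proposal (the martingale variance recursion, the epoch-start variance $\sigma_1^2/|\xi_0|$, the parameter tuning, and the complexity accounting) are correct and match the paper.
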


\begin{theorem}\label{thm: ssqnm-online}
	Let additional Algorithm \ref{assum5} hold. Apply online Algorithm \ref{Spider-SQNM-online} to solve the online optimization problem (\ref{3.4}). Choose any desired accuracy $\epsilon>0$ and set parameters as
\begin{align}
		\alpha_{k} = \frac{2}{k+1}, \quad q=|\xi_k| = \sqrt{|\xi_0|} \equiv \sqrt{ \frac{4(1+\beta/\beta^*)\sigma_1^2}{\epsilon^2}}, \nonumber
\end{align}
where $\beta^* = \beta (\frac{\sigma_{\mathrm{min}}}{2}-3L\beta\sigma_{\mathrm{max}}^2 - 3L^2\beta^2\sigma_{\mathrm{max}}^3)$, $\beta \equiv \frac{\sigma_{\mathrm{min}}}{(3+\sqrt{15})L\sigma_{\mathrm{max}}^2}$. Let $\beta_k=\beta$, $\lambda_k \in [\beta_k, (1+\alpha_k)\beta_{k}]$. Then, the output $z_{\zeta}$ of this algorithm satisfies $\mathbb{E}\|\nabla f(z_\zeta)\| \le \epsilon$ provided that the total number of iterations $K$ satisfies
	\begin{align}
		K \ge \mathcal{O}\bigg(\frac{f(x_0)-f^*}{\epsilon^2} \bigg).
	\end{align}
	Moreover, the SFO complexity is in the order of $\mathcal{O}(\epsilon^{-3})$.
\end{theorem}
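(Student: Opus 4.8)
The plan is to adapt the finite-sum momentum analysis behind Theorem \ref{thm: ssqnm} to the streaming setting, where the only structural change is that the ``anchor'' gradient computed when $\mathrm{mod}(k,q)=0$ is now a mini-batch estimate $\nabla f_{\xi_0}(z_k)$ rather than the exact full gradient. First I would control the variance of the SPIDER estimator. Writing $k_0 = q\lfloor k/q\rfloor$ for the current anchor and using the martingale structure of the recursive increments in Algorithm \ref{Spider-SQNM-online} together with Assumption \ref{assum4}, the cross terms vanish in expectation and, after applying $L$-smoothness (Assumption \ref{assum2}) to each increment, I obtain the telescoping bound
\begin{align}
\mathbb{E}\|v_k - \nabla f(z_k)\|^2 \le \mathbb{E}\|v_{k_0}-\nabla f(z_{k_0})\|^2 + \frac{L^2}{|\xi_k|}\sum_{j=k_0+1}^{k}\mathbb{E}\|z_j - z_{j-1}\|^2. \nonumber
\end{align}
By Assumption \ref{assum5} the anchor term is at most $\sigma_1^2/|\xi_0|$, which is the genuinely new contribution absent in the finite-sum case.

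Next I would establish the per-step descent. Applying $L$-smoothness to $y_{k+1}=z_k-\beta_k d_k$ with $d_k = H_k v_k$ gives $f(y_{k+1}) \le f(z_k) - \beta_k\langle \nabla f(z_k), H_k v_k\rangle + \tfrac{L\beta_k^2}{2}\|H_k v_k\|^2$. I would split $v_k = \nabla f(z_k) + (v_k-\nabla f(z_k))$, lower-bound $\langle\nabla f(z_k),H_k\nabla f(z_k)\rangle \ge \sigma_{\mathrm{min}}\|\nabla f(z_k)\|^2$ and upper-bound $\|H_k v_k\| \le \sigma_{\mathrm{max}}\|v_k\|$ via Assumption \ref{assum3}, and absorb the gradient-error cross term by Young's inequality. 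Combined with the variance bound and the estimate $\|z_j-z_{j-1}\|^2 \le c\,\beta^2\sigma_{\mathrm{max}}^2\|v_{j-1}\|^2$ (obtained from the coupled $x_k,y_k,z_k$ recursions), this yields a one-step inequality whose coefficient on $\mathbb{E}\|\nabla f(z_k)\|^2$ is exactly $\beta^* = \beta(\tfrac{\sigma_{\mathrm{min}}}{2} - 3L\beta\sigma_{\mathrm{max}}^2 - 3L^2\beta^2\sigma_{\mathrm{max}}^3)$; the three subtracted terms originate respectively from the smoothness quadratic, the gradient-error cross term, and the variance propagated through $\|z_j-z_{j-1}\|^2$. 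The prescribed $\beta = \sigma_{\mathrm{min}}/((3+\sqrt{15})L\sigma_{\mathrm{max}}^2)$ is precisely the threshold keeping $\beta^*>0$.

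I would then sum the one-step inequalities over $k=0,\dots,K-1$, controlling the $f(z_k)$-to-$f(y_k)$ discrepancy through the gap recursion $x_k-y_k=(1-\alpha_k)(x_{k-1}-y_{k-1})-(\lambda_{k-1}-\beta_{k-1})d_{k-1}$ with $\lambda_{k-1}-\beta_{k-1}\in[0,\alpha_{k-1}\beta_{k-1}]$, which keeps the gap higher order, so the function values collapse to $f(x_0)-f^*$ via Assumption \ref{assum1} while the accumulated SPIDER variance collapses to a multiple of the per-epoch anchor error $\sigma_1^2/|\xi_0|$. Rearranging gives
\begin{align}
\frac{1}{K}\sum_{k=0}^{K-1}\mathbb{E}\|\nabla f(z_k)\|^2 \le \frac{f(x_0)-f^*}{\beta^* K} + \frac{(1+\beta/\beta^*)\sigma_1^2}{|\xi_0|}. \nonumber
\end{align}
With $|\xi_0| = q^2$ and the prescribed $q=\sqrt{4(1+\beta/\beta^*)\sigma_1^2/\epsilon^2}$ the second term is at most $\epsilon^2/4$, and $K \ge \mathcal{O}((f(x_0)-f^*)/\epsilon^2)$ makes the first term at most $\epsilon^2/4$; since $\zeta$ is uniform, Jensen's inequality yields $\mathbb{E}\|\nabla f(z_\zeta)\| \le \sqrt{\mathbb{E}\|\nabla f(z_\zeta)\|^2}\le\epsilon$. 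For the complexity, each epoch of $q$ steps uses $|\xi_0| + (q-1)\cdot 2|\xi_k| = \mathcal{O}(q^2)$ samples, so over $K/q$ epochs the total is $\mathcal{O}(Kq)$; substituting $K=\mathcal{O}(\epsilon^{-2})$ and $q=\mathcal{O}(\epsilon^{-1})$ gives the claimed $\mathcal{O}(\epsilon^{-3})$ SFO complexity.

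The hard part will be the momentum bookkeeping in the descent step: keeping the $x_k-y_k$ discrepancy and the $\|z_j-z_{j-1}\|^2$ variance-propagation terms at a scale that leaves $\beta^*$ strictly positive, since the prescribed $\beta$ sits exactly at the boundary where $\beta^*$ would vanish. Matching the factor-$3$ constants so that the threshold $(3+\sqrt{15})$ emerges, while simultaneously ensuring the online anchor variance $\sigma_1^2/|\xi_0|$ is propagated across an epoch without blow-up, is where the real care is needed.
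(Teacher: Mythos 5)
Your outline coincides with the paper's proof in all of its supporting components: your variance bound with the anchor term $\sigma_1^2/|\xi_0|$ is exactly the paper's Lemma~\ref{aux: 5}, your final estimate has the same shape as the paper's Eq.~(\ref{eq: final4}), and your parameter choices and SFO accounting $\left((K/q)|\xi_0|+2K|\xi_k| = \mathcal{O}(Kq)=\mathcal{O}(\epsilon^{-3})\right)$ are the paper's. Where you diverge is the descent step, and that is where there is a genuine gap. The paper centers smoothness on the $x$-iterates, $f(x_{k+1}) \le f(x_k) - \lambda_k\langle\nabla f(x_k),H_kv_k\rangle + \frac{L\lambda_k^2}{2}\|H_kv_k\|^2$, so the only function values appearing belong to the single sequence $\{x_k\}$ and telescope cleanly; the momentum sequences enter only through $\|\nabla f(x_k)-v_k\|\le L(1-\alpha_{k+1})\|y_k-x_k\|+\|\nabla f(z_k)-v_k\|$, every cross term pairs two \emph{step-sized} quantities ($\|v_k\|$ against $\|y_k-x_k\|$), and the Lyapunov decrease is $-\beta^*\mathbb{E}\|v_k\|^2$ on the estimator, with $\mathbb{E}\|\nabla f(z_\zeta)\|^2$ recovered only at the very end via $\|\nabla f(z_\zeta)\|^2\le 2\|\nabla f(z_\zeta)-v_\zeta\|^2+2\|v_\zeta\|^2$; the online anchor variance then enters additively, as in Eq.~(\ref{eq: 28}).

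Your route instead telescopes $f(y_{k+1})\le f(z_k)-\cdots$ across two \emph{different} sequences, so you must bound $\sum_k\left[f(z_k)-f(y_k)\right]$, and your justification --- that the gap recursion keeps this ``higher order'' --- is false. Smoothness gives $f(z_k)-f(y_k)\le\langle\nabla f(y_k),z_k-y_k\rangle+\frac{L}{2}\|z_k-y_k\|^2$, whose cross term is \emph{first} order in the gap multiplied by $\|\nabla f(y_k)\|$, a quantity no assumption in the paper bounds. Absorbing it by Young's inequality against your negative term on $\|\nabla f(z_k)\|^2$ forces the Young parameter to be $O(\beta)$, hence a factor $\Omega(1/\beta)$ multiplying $\|z_k-y_k\|^2$; this wipes out the $L\beta$ scale separation on which positivity of $\beta^*$ rests, and it is fatal at early iterations, where $\alpha_{k}=O(1)$, $\lambda_k-\beta_k$ can be as large as $2\beta$, and the gap is genuinely of the same order as a step. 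Concretely, the induced coefficient on the early $\mathbb{E}\|v_t\|^2$ is $\Omega(\beta\sigma_{\mathrm{max}}^2/\sigma_{\mathrm{min}})$, which exceeds the available per-step decrease $O(\beta\sigma_{\mathrm{min}})$ for every $\sigma_{\mathrm{min}}\le\sigma_{\mathrm{max}}$; so your coefficient cannot come out ``exactly $\beta^*$,'' and the theorem's prescribed $|\xi_0|=4(1+\beta/\beta^*)\sigma_1^2/\epsilon^2$, calibrated to that exact $\beta^*$, would not suffice in your bound. A second omission of the same kind: your one-term estimate $\|z_j-z_{j-1}\|^2\le c\,\beta^2\sigma_{\mathrm{max}}^2\|v_{j-1}\|^2$ drops the history term $2\alpha_{j+1}^2\sigma_{\mathrm{max}}^2\Gamma_j\sum_t\frac{(\lambda_{t-1}-\beta_{t-1})^2}{\alpha_t\Gamma_t}\|v_{t-1}\|^2$ of Lemma~\ref{aux: 3}, whose control is precisely the paper's ``term $T$'' computation and the origin of the $3L^2\beta^2\sigma_{\mathrm{max}}^3$ piece of $\beta^*$. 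The clean repair is the paper's organization (smoothness at $x_k$, decrease in $\mathbb{E}\|v_k\|^2$, gradient conversion at the end); alternatively one could avoid the first-order cross term via the interpolation bound $f(z_k)\le(1-\alpha_{k+1})f(y_k)+\alpha_{k+1}f(x_k)+\frac{L}{2}\alpha_{k+1}(1-\alpha_{k+1})\|x_k-y_k\|^2$, but that forces you to track $f(x_k)$ anyway and collapses back into the paper's argument.
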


\begin{theorem}\label{thm: ssqnmer-online}
	Let additional Algorithm \ref{assum5} hold. Apply the online SpiderSQNMER or online SpiderSQNMED to solve the problem (\ref{3.4}). Choose any desired accuracy $\epsilon>0$, let $\alpha_k$ defined as Eqs.~(\ref{3.2}) and (\ref{3.3}) for online SpiderSQNMER and online SpiderSQNMED, respectively. And set parameters as
\begin{align}
		q=|\xi_k| = \sqrt{|\xi_0|} \equiv \sqrt{ \frac{4(1+\beta/\beta^*)\sigma_1^2}{\epsilon^2}}, \nonumber
	\end{align}
where $\beta^* = \beta (\frac{\sigma_{\mathrm{min}}}{2}-3L\beta\sigma_{\mathrm{max}}^2 - 3L^2\beta^2\sigma_{\mathrm{max}}^3)$, $\beta \equiv \frac{\sigma_{\mathrm{min}}}{(3+\sqrt{15})L\sigma_{\mathrm{max}}^2}$. Let $\beta_k =\beta$, $\lambda_k \in [\beta_k, (1+\alpha_k)\beta_{k}]$. Then, the output $z_{\zeta}$ of both algorithms satisfy $\mathbb{E}\|\nabla f(z_\zeta)\| \le \epsilon$ provided that the total number of iterations $K$ satisfies
	\begin{align}
		K \ge \mathcal{O}\bigg(\frac{f(x_0)-f^*}{\epsilon^2} \bigg).
	\end{align}
	Moreover, the SFO complexity is in the order of $\mathcal{O}(\epsilon^{-3})$.
\end{theorem}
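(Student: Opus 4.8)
The plan is to reduce the argument to the analysis already carried out for the vanilla online momentum method (Theorem~\ref{thm: ssqnm-online}), and then to verify that the only places where the momentum schedule enters go through unchanged for the schedules~(\ref{3.2}) and~(\ref{3.3}). The decisive observation is that Algorithms \ref{Spider-SQNM-online} with the epochwise-restart or epochwise-diminishing $\alpha_k$ differ from the vanilla online momentum algorithm \emph{only} in the schedule $\{\alpha_k\}$, while the parameter choices $q=|\xi_k|=\sqrt{|\xi_0|}$, $\beta_k\equiv\beta$, $\beta^\ast=\beta(\tfrac{\sigma_{\mathrm{min}}}{2}-3L\beta\sigma_{\mathrm{max}}^2-3L^2\beta^2\sigma_{\mathrm{max}}^3)$, and $\lambda_k\in[\beta_k,(1+\alpha_k)\beta_k]$ are identical. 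Hence I would first re-trace the proof of Theorem~\ref{thm: ssqnm-online} and isolate every inequality that invokes $\alpha_k$, then show that each such inequality depends on $\alpha_k$ only through the per-step bounds $0<\alpha_{k+1}\le 1$ and the constraint $\lambda_k\le(1+\alpha_k)\beta_k$, both of which hold verbatim under~(\ref{3.2}) and~(\ref{3.3}).

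Concretely, the core estimates are three. \textbf{(i) One-step descent.} Using $L$-smoothness (Assumption~\ref{assum2}) together with the spectral bounds $\sigma_{\mathrm{min}}I\preceq H_k\preceq\sigma_{\mathrm{max}}I$ (Assumption~\ref{assum3}) and the unbiasedness property of $H_kv_k$ (Assumption~\ref{assum4}), and feeding in the update $y_{k+1}=z_k-\beta_k d_k$ with $d_k=H_kv_k$, I obtain a bound on $\E[f(y_{k+1})]$ in terms of $f(z_k)$, $\|\nabla f(z_k)\|^2$, and the estimation error $\E\|v_k-\nabla f(z_k)\|^2$; the coefficient multiplying $\|\nabla f(z_k)\|^2$ is exactly $\beta^\ast>0$ for the stated $\beta$. \textbf{(ii) SPIDER variance recursion (online).} Within an epoch the recursion $v_k=\nabla f_{\xi_k}(z_k)-\nabla f_{\xi_k}(z_{k-1})+v_{k-1}$ and $L$-smoothness give
\[
\E\|v_k-\nabla f(z_k)\|^2\le \E\|v_{k-1}-\nabla f(z_{k-1})\|^2+\frac{L^2}{|\xi_k|}\,\E\|z_k-z_{k-1}\|^2,
\]
while at each epoch head Assumption~\ref{assum5} yields $\E\|v_k-\nabla f(z_k)\|^2\le\sigma_1^2/|\xi_0|$. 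Summing over the $q$ inner steps with $|\xi_k|=q$ and $|\xi_0|=q^2$ keeps the accumulated error at order $\sigma_1^2/q^2$ per step, which is where the setting $q^2=|\xi_0|=4(1+\beta/\beta^\ast)\sigma_1^2/\epsilon^2$ is calibrated. \textbf{(iii) Potential/telescoping.} Combining (i) and (ii) into a Lyapunov function built from $f(y_k)$ and the displacement terms controlled by $\lambda_k\in[\beta_k,(1+\alpha_k)\beta_k]$, I show an expected decrease of order $\beta^\ast\|\nabla f(z_k)\|^2$ minus the $O(\sigma_1^2/q^2)$ error; telescoping over $k=0,\dots,K-1$ and using $f\ge f^\ast$ (Assumption~\ref{assum1}) gives $\tfrac1K\sum_k\E\|\nabla f(z_k)\|^2\le\epsilon^2$ once $K\ge\cO\!\big((f(x_0)-f^\ast)/\epsilon^2\big)$. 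By uniform sampling of $\zeta$ and Jensen's inequality this yields $\E\|\nabla f(z_\zeta)\|\le\epsilon$.

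For the SFO complexity, each epoch of length $q$ costs $|\xi_0|+(q-1)|\xi_k|=\cO(q^2)$ oracle calls, and there are $K/q$ epochs, so the total is $\cO(Kq)$. With $q=\cO(\epsilon^{-1})$ and $K=\cO(\epsilon^{-2})$ this is $\cO(\epsilon^{-3})$, as claimed, and it is independent of the sample size.

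The step I expect to be the main obstacle is the telescoping in~(iii) for the \emph{non-monotone} schedules. Unlike the vanilla schedule $\alpha_k=2/(k+1)$, the epochwise-restart coefficient~(\ref{3.2}) jumps back to its maximal value at every epoch boundary, and the epochwise-diminishing coefficient~(\ref{3.3}) is piecewise constant; so I must ensure that the Lyapunov argument never uses monotonicity of $\alpha_k$ in $k$, but only the pointwise facts $0<\alpha_{k+1}\le1$ (for the relevant indices) and $\beta_k\le\lambda_k\le(1+\alpha_k)\beta_k$. Verifying that the cross terms coupling consecutive iterates $z_k,z_{k+1}$ telescope (or are dominated by the descent gain) using these pointwise bounds alone — rather than summation-by-parts that would exploit a decreasing schedule — is the delicate part; once this is established, the identical chain of inequalities closes for all three momentum schemes and the rate is unaffected.
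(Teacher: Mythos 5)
Your reduction to Theorem~\ref{thm: ssqnm-online} is the right starting point, but the claim on which your whole argument rests --- that every inequality in that proof depends on $\alpha_k$ only through the pointwise bounds $0<\alpha_{k+1}\le 1$ and $\lambda_k\le(1+\alpha_k)\beta_k$ --- is not correct. The proof works with the schedule-dependent products $\Gamma_k=(1-\alpha_k)\Gamma_{k-1}$ and uses estimates that are specific to the schedule: $\sum_{t=1}^{k}\alpha_t/\Gamma_t=1/\Gamma_k$, $\sum_{t=k}^{K-1}\Gamma_t\le 2/k$, $\alpha_{k+1}/\Gamma_{k+1}=k+1$, and $\sum_{i=(\tau(k)-1)q}^{\tau(k)q-1}\alpha_{i+2}^2\Gamma_{i+1}\le 2/((\tau(k)-1)q+1)^3$. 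The balance between the growth of $1/\Gamma_k$ (which multiplies the momentum error $(\lambda_k-\beta_k)^2$) and the decay of $\alpha_k^2\Gamma_k$ (which controls the accumulated drift of the $z_k$ sequence) is exactly what makes the accumulation term $T$ in Eq.~(\ref{eq: 18}) summable; pointwise bounds on $\alpha_k$ alone cannot deliver this. Accordingly, for the epochwise-diminishing scheme the paper does not argue pointwise: it recomputes $\Gamma_k=2/(\lceil k/q\rceil(\lceil k/q\rceil+1))$, $\sum_{t\ge k}\Gamma_t\le 2/\lceil k/q\rceil$, $\alpha_{k+1}/\Gamma_{k+1}=\lceil k/q\rceil+1$, and $\sum_i\alpha_{i+2}^2\Gamma_{i+1}\le 2/\tau(k)^3$, and then re-verifies that Eqs.~(\ref{eq: 19}), (\ref{eq: 31}) and (\ref{eq: 32}) survive with these replacements before adding the online variance term $\sigma_1^2/|\xi_0|$.

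The gap is fatal for the epochwise-restart scheme: there $\alpha_k$ returns to its maximal value at every epoch boundary, so the product $\Gamma_k$ degenerates across boundaries and no single global Lyapunov/telescoping argument of the kind you describe in step (iii) can close. The paper resolves this with an idea your proposal is missing: it applies the vanilla-momentum online result \emph{separately within each restart period} $\{tq,\dots,(t+1)q-2\}$, obtaining for $\zeta$ sampled in that period a bound of order $\big(f(x_{tq})-\mathbb{E}[f(x_{(t+1)q-1})]\big)/(q-1)+1/|\xi_0|$, and then stitches the periods together: summing over $t$, the function-value differences telescope because $x_{(t+1)q-1}=x_{(t+1)q}$ at a restart, and sampling $\zeta$ uniformly over the non-boundary indices yields the global bound of order $(f(x_0)-f^*)/K+1/|\xi_0|$, from which the stated $K$ and the $\mathcal{O}(\epsilon^{-3})$ SFO count follow. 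Without this per-period decomposition (or a genuine substitute for it), your argument covers at best the epochwise-diminishing case, and even there only after redoing the schedule-specific computations you tried to avoid.
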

\begin{remark}
There are two differences between Algorithm \ref{thm: ssqn-online} and  Algorithm \ref{thm: ssqnm-online}\&\ref{thm: ssqnmer-online}: 1) Algorithm \ref{thm: ssqnm-online}\&\ref{thm: ssqnmer-online} introduce an extra parameter, \ie, momentum coefficient $\alpha_k$ because of using momentum scheme; 2) the choice of $\beta_k$ in Algorithm \ref{thm: ssqnm-online}\&\ref{thm: ssqnmer-online} are different from that of $\eta$ in Algorithm \ref{thm: ssqn-online} (note that $\beta_k$ plays a same role as $\eta$). Algorithm \ref{thm: ssqnm-online} and Algorithm \ref{thm: ssqnmer-online} are the same except for the choice of $\alpha_k$ due to using different momentum schemes.  Moreover, given required conditions in Algorithm \ref{thm: ssqn-online,thm: ssqnm-online,thm: ssqnmer-online}, the SFO complexity of Algorithm \ref{Spider-SQN-online} and its variants with different momentum schemes to satisfy the $\epsilon$-first-order stationary condition are $\mathcal{O}(\epsilon^{-3})$, which matches the state-of-the-art results of first-order stochastic methods.
\end{remark}
\begin{figure*}[!t]
	\centering
	\begin{subfigure}{0.3\linewidth}
		\includegraphics[width=\linewidth]{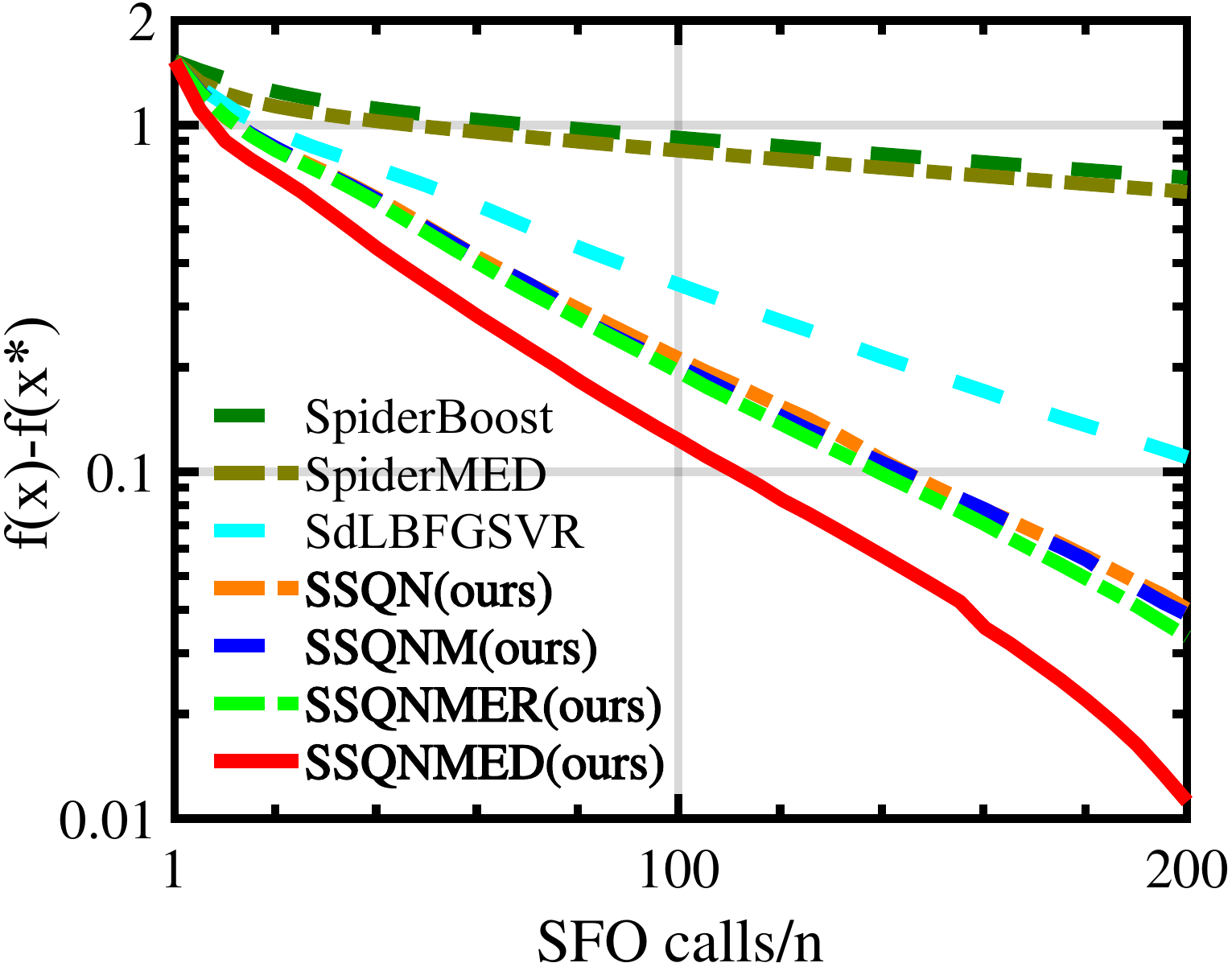}
		\caption{Data: a9a}
	\end{subfigure}
\begin{subfigure}{0.3\linewidth}
		\includegraphics[width=\linewidth]{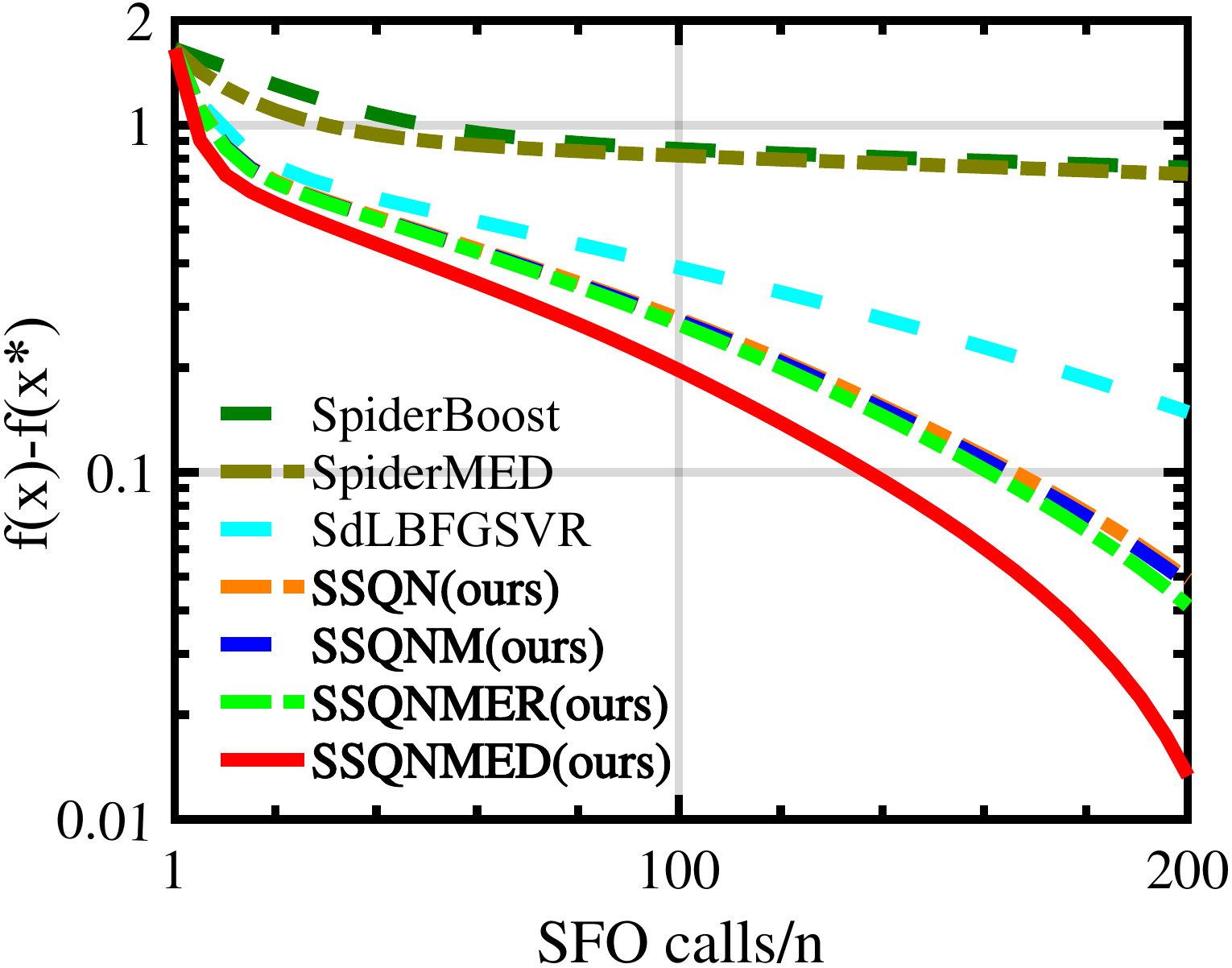}
		\caption{Data: w8a}
	\end{subfigure}
\begin{subfigure}{0.3\linewidth}
		\includegraphics[width=\linewidth]{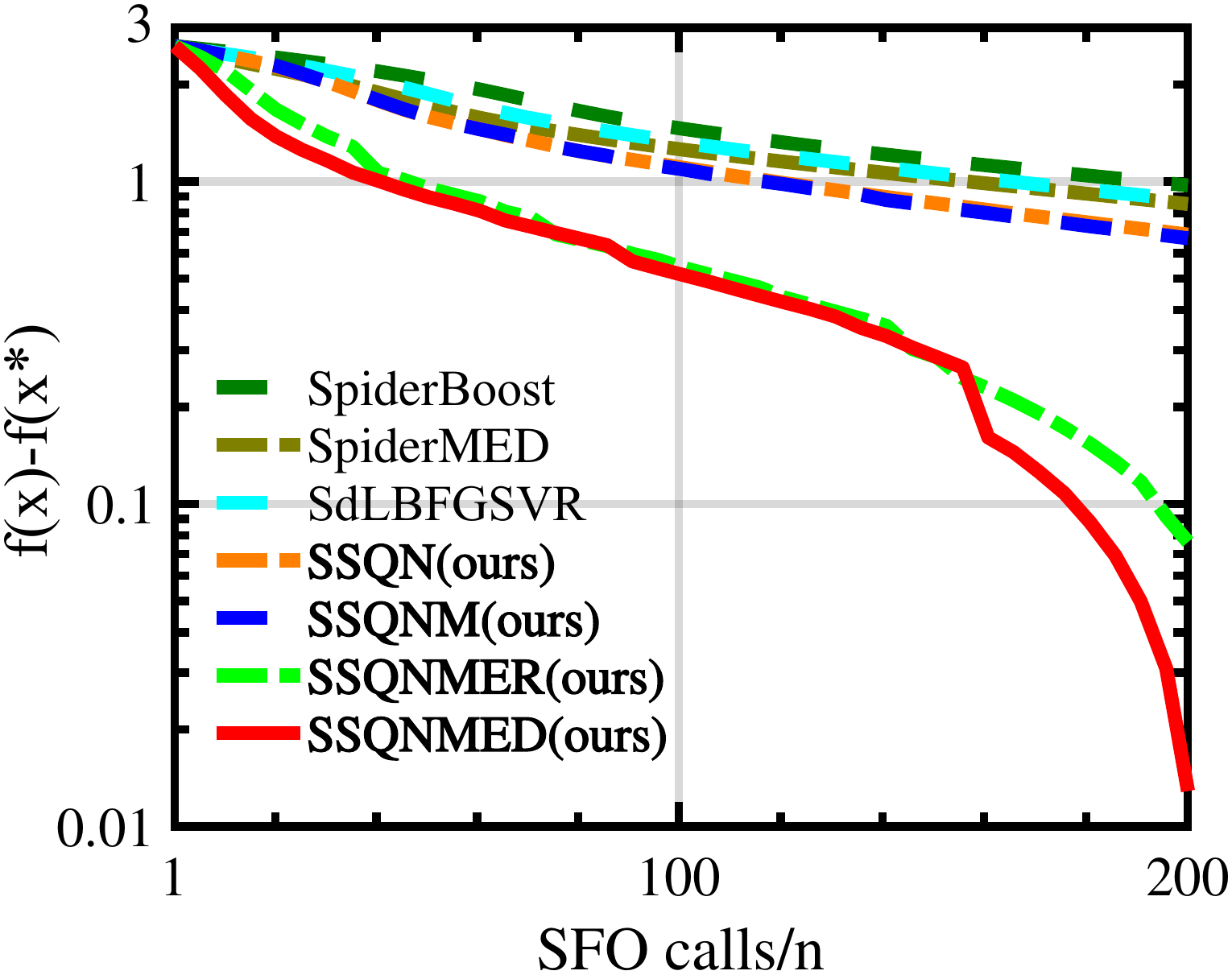}
		\caption{Data: mnist}
	\end{subfigure}%

	\begin{subfigure}{0.3\linewidth}
		\includegraphics[width=\linewidth]{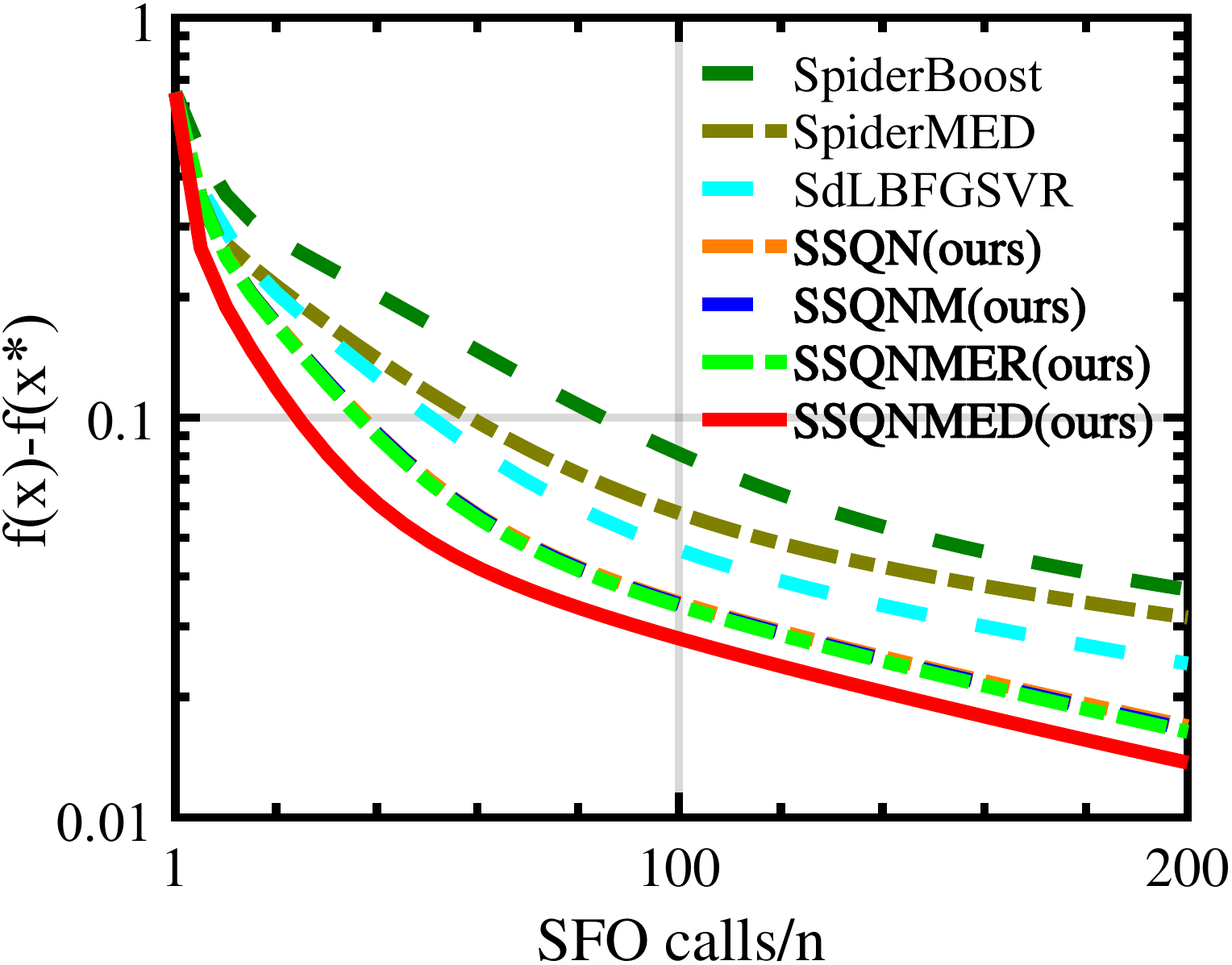}
		\caption{Data: ijcnn1}
	\end{subfigure}%
	\begin{subfigure}{0.3\linewidth}
		\includegraphics[width=\linewidth]{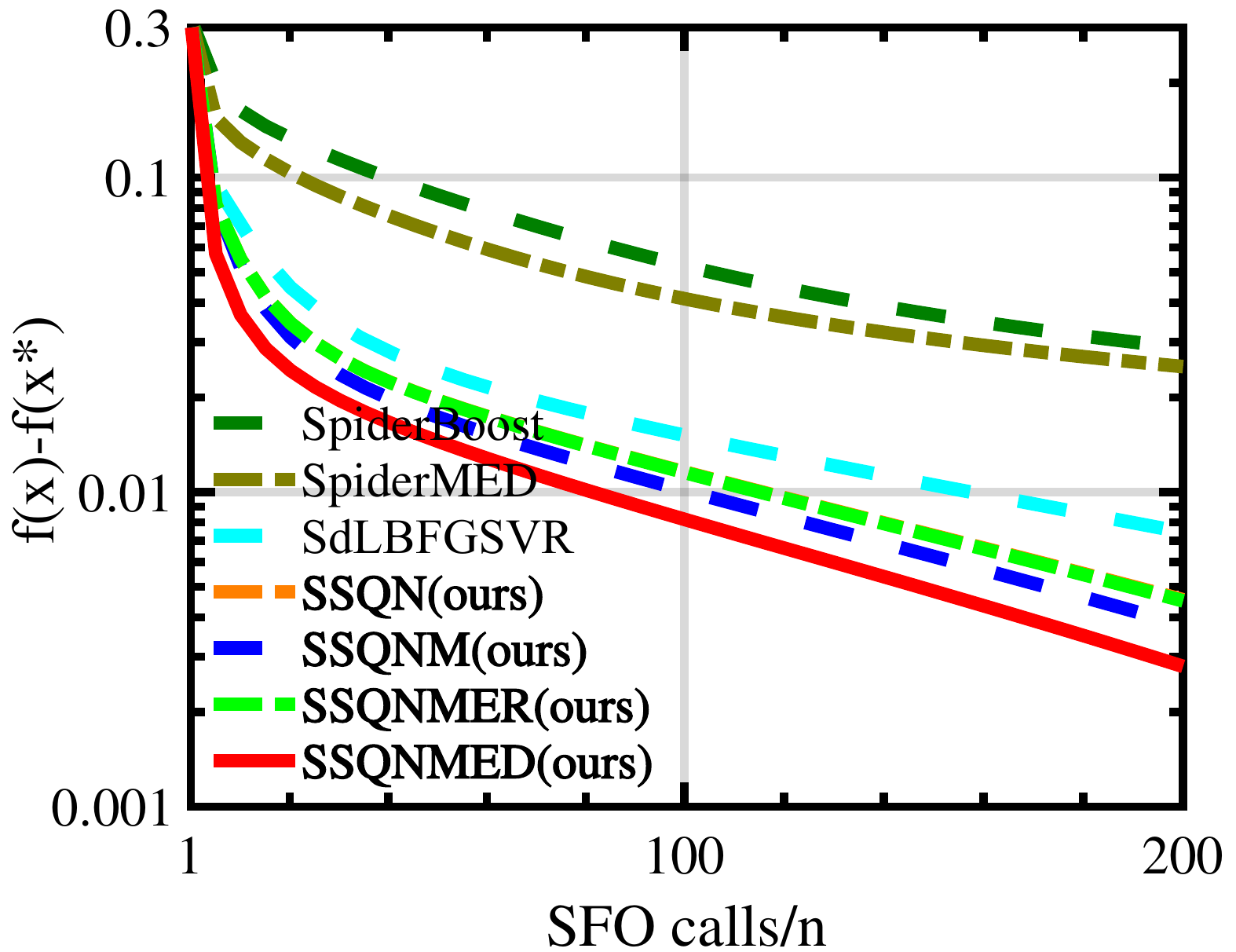}
		\caption{Data: covtype}
	\end{subfigure}%
	\begin{subfigure}{0.3\linewidth}
		\includegraphics[width=\linewidth]{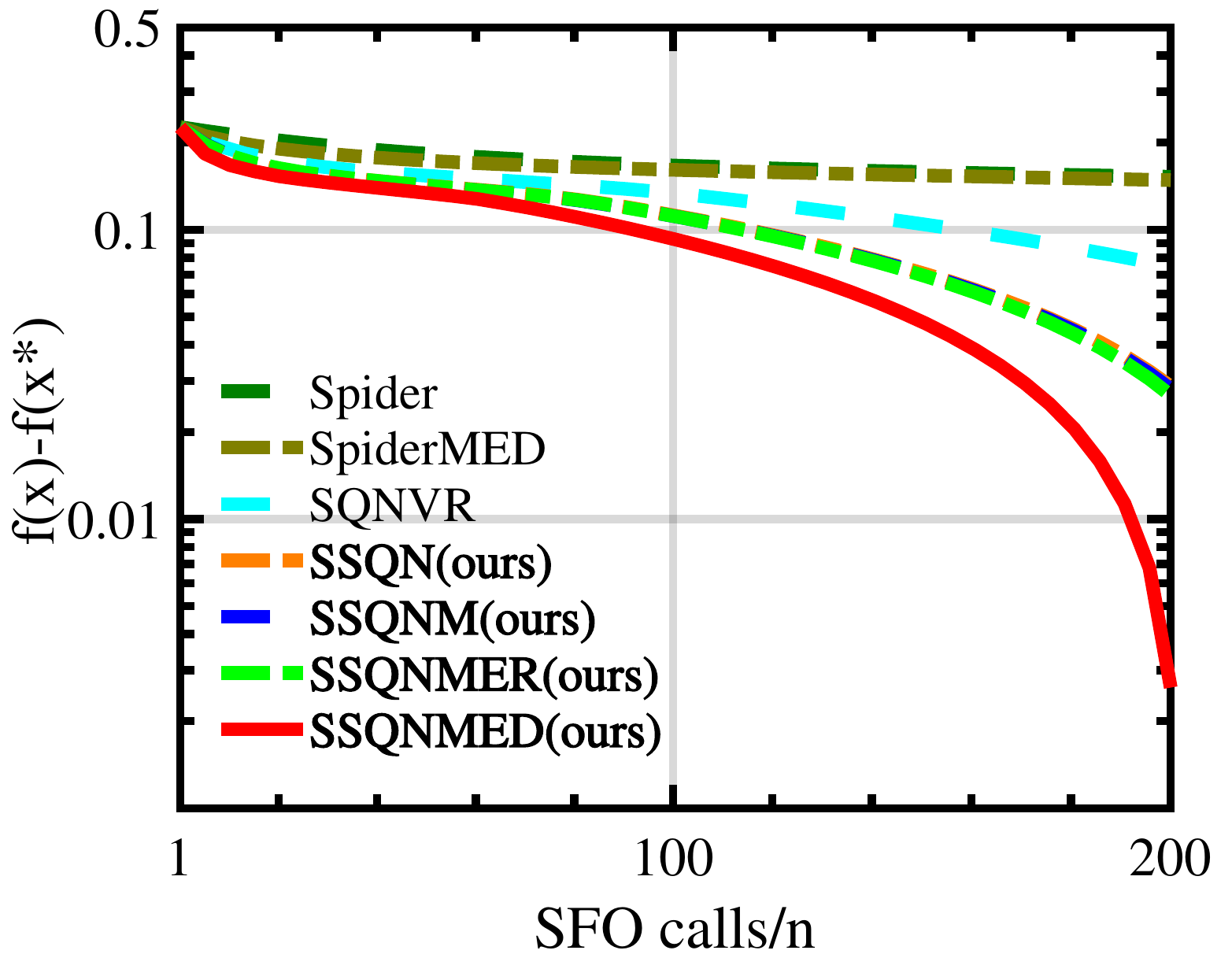}
		\caption{Data: synthetic data}
	\end{subfigure}%
	\caption{Comparison among algorithms for solving nonconvex robust linear regression problems.}
\label{Experment2}
\end{figure*}
\subsection{The Lower Bound}
We will present the optimality of our algorithms in the perspective of algorithmic lower bound result \cite{carmon2017lower}, which can be obtained by following the analyses in \cite{fang2018spider}. For the finite-sum case, given any  random algorithm ${\mathcal{A}}$ that maps functions $f: \mathbb{R}^d \to \mathbb{R}$ to a sequence of iterates in $\mathbb{R}^{d+1}$, with
\begin{align}\label{algor-need}
[\mathbf{x}^k; i_k] &=
{\mathcal{A}}^{k-1} \big(\mathbbm{\xi}, \nabla f_{i_0}(\mathbf{x}^0), \nabla f_{i_1}(\mathbf{x}^1), \ldots, \nabla f_{i_{k-1}}(\mathbf{x}^{k-1})   \big)
,\nonumber \\ & \quad k\geq 1,
\end{align}
where ${\mathcal{A}}^{k}$  denotes measure mapping into $\mathbb{R}^{d+1}$, $i_k$ is the individual function chosen by ${\mathcal{A}}$ at iteration $k$, and $\mathbbm{\xi}$ is uniform random vector from $[0,1]$.  Moreover, there is $[\mathbf{x}^0; i_0] = {\mathcal{A}}^0(\mathbbm{\xi})$,  where  ${\mathcal{A}}^0$ is a  measure mapping.
The lower bound result for solving (\ref{eq: P}) is stated in Theorem \ref{theo:lowerbdd}.
\begin{theorem}[Lower bound for SFO complexity for the finite-sum case]\cite{fang2018spider}
\label{theo:lowerbdd}
	For any $L>0$, $\Delta >0$, and {$2\leq n \leq 	\mathcal{O}\left( \Delta^2 L^2\cdot \epsilon^{-4} \right)$}, for any algorithm ${\mathcal{A}}$ satisfying \eqref{algor-need}, there exists a dimension
$d = {\mathcal{O}}\big(\Delta^2 L^2 \cdot n^2\epsilon^{-4}\big),$
and a function $f$ satisfying Assumptions~\ref{assum1}-\ref{assum5} for the finite-sum case,  such that in order to find an $\epsilon$-first-order stationary point must cost at least $
\mathcal{O}\big(L \Delta \cdot n^{1/2}\epsilon^{-2}\big)
$ stochastic gradient accesses.
\end{theorem}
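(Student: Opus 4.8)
The plan is to adopt the randomized hard-instance methodology of \cite{carmon2017lower,fang2018spider}: rather than reasoning against a fixed adversary, I would invoke Yao's principle and construct a \emph{distribution} over finite-sum functions $f=\frac1n\sum_{i=1}^n f_i$ such that every \emph{deterministic} algorithm of the form \eqref{algor-need}, started at $x^0=0$, requires in expectation $\Omega(L\Delta\, n^{1/2}\epsilon^{-2})$ component-gradient accesses before producing a point with $\|\nabla f(x)\|\le\epsilon$. Lower-bounding the expected cost of the best deterministic algorithm against this distribution then lower-bounds the worst-case cost of any randomized ${\mathcal{A}}$, which is what the statement asks for.

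First I would build the analytic gadget: a \emph{robust zero-chain} $\Psi$ on a chain of $m$ coordinates, in the style of \cite{carmon2017lower}. The two properties I need are (i) that, after scaling by parameters chosen to enforce $L$-smoothness and a function gap at most $\Delta$, the gradient norm stays bounded below by a constant multiple of $\epsilon$ as long as the last coordinate of the chain has not been ``activated''; and (ii) the zero-chain structure, namely that $\nabla_j\Psi$ vanishes unless the neighbouring coordinate $x_{j-1}$ is already nonzero, so a single gradient evaluation can activate at most one new coordinate. Matching the gradient floor to $\epsilon$ and the gap to $\Delta$ fixes the chain length at $m\asymp L\Delta\epsilon^{-2}$, and embedding the chain together with the randomization below fixes the ambient dimension at $d\asymp\Delta^2L^2 n^2\epsilon^{-4}$; the stated range $2\le n\le\mathcal{O}(\Delta^2L^2\epsilon^{-4})$ is exactly what keeps the chain long enough to host the construction.

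The crux is the second step: distributing the chain over the $n$ summands so that advancing along the chain costs $\Omega(n^{1/2})$ component queries per link rather than $O(1)$. Here I would follow the finite-sum randomization of \cite{fang2018spider}: the chain is interleaved across the components and a hidden uniformly random assignment determines which component controls the advance of each link, so the algorithm cannot predict in advance which $f_i$ to query. Defining a progress potential that records how far the algorithm has traversed the chain, a conditioning argument over the random assignment shows that any single component-gradient evaluation increases this potential by at most $O(n^{-1/2})$ in expectation, because a query to a ``wrong'' component leaves the active link essentially untouched. Hence advancing one link requires $\Omega(n^{1/2})$ queries, and multiplying by the $m\asymp L\Delta\epsilon^{-2}$ links yields the claimed $\Omega(L\Delta\, n^{1/2}\epsilon^{-2})$.

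I expect this randomized-assignment argument to be the main obstacle, since it is where the exponent $1/2$ is won or lost: getting it right requires a carefully calibrated potential that tracks how much each component query can move the responsible coordinate, together with the conditioning over the hidden assignment. The remaining work is verification, namely checking that each $f_i$ of the construction is twice continuously differentiable with $\|\nabla^2 f_i\|\le\kappa$ and $L$-smooth (Assumptions \ref{assum2} and \ref{assum6}), that $f$ is bounded below by $f^*$ with $f(x^0)-f^*\le\Delta$ (Assumption \ref{assum1}), and --- for the finite-sum instance --- that the component variance is controlled (Assumption \ref{assum5}); these follow from the explicit scaling of $\Psi$ and are routine once the parameters are fixed.
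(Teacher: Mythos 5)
Your high-level framework --- robust zero-chains in the style of \cite{carmon2017lower}, random orthogonal embeddings to defeat randomized algorithms, and parameter scaling to meet the smoothness and gap budgets --- is the same one the paper (following \cite{fang2018spider}) uses. The genuine gap is in the step you yourself flag as the crux: the finite-sum mechanism that is supposed to win the factor $n^{1/2}$. You propose a \emph{single} chain of length $m \asymp L\Delta\epsilon^{-2}$ whose links are interleaved across the $n$ summands via a hidden uniformly random assignment, and you assert that each component-gradient query advances a progress potential by at most $O(n^{-1/2})$ in expectation. That assertion is unsupported, and the mechanism cannot be repaired: with a hidden uniform assignment the natural per-query progress bound is $O(1/n)$ (the probability of querying the controlling component), and, more fatally, distributing one chain over the summands forces each private link to appear in $f_i$ with an extra factor of $n$ (since $f=\frac{1}{n}\sum_i f_i$), so enforcing smoothness of the individual components scales the link potentials down by $n$; after re-balancing the gradient floor against $\epsilon$ and the gap against $\Delta$, the admissible chain length shrinks to $m \lesssim \Delta L/(n\epsilon^2)$, the queries-per-link gain is exactly cancelled, and the net bound degenerates to $\Omega(L\Delta\epsilon^{-2})$ with no dependence on $n$. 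This is precisely why the paper does not use the interleaved-chain device.

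What the paper's proof does instead: it plants $n$ \emph{independent} scaled copies of the hard function in \emph{mutually orthogonal} subspaces, $f_i(x)=\frac{ln^{1/2}\epsilon^2}{L}\,\widehat{f}_{M,B_i^M}\!\left(C_i^{\top}x/b\right)$ with $C_i^{\top}C_j=0$ for $i\neq j$ and $b=l\epsilon/L$. Orthogonality yields the norm aggregation $\|\nabla f(x)\|^2=\frac{1}{n^2}\sum_{i=1}^n\|\nabla f_i(x)\|^2$, and a query to $f_j$ reveals nothing about chain $i\neq j$; hence to drive $\|\nabla f\|$ below $\epsilon$ the algorithm must ``solve'' at least $n/2$ of the components, while each unsolved component still has gradient norm at least $n^{1/2}\epsilon/2$ --- note the per-component gradient floor is $n^{1/2}\epsilon$, not $\epsilon$ as in your parameterization. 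This inflated floor is exactly what lets each chain be shortened to $M=\frac{\Delta L}{12\,l\,n^{1/2}\epsilon^2}$ while keeping the total gap at most $\Delta$, and since solving component $i$ requires at least $M$ queries \emph{to that specific component}, the total cost is at least $nM/2=\Omega(L\Delta\, n^{1/2}\epsilon^{-2})$. Two further corrections fall out of this: the restriction $n\leq\mathcal{O}(\Delta^2L^2\epsilon^{-4})$ is there to ensure $M\geq 1$ (not, as you suggest, to keep a single $\epsilon$-calibrated chain long enough), and the smoothness verification is inherently mean-square rather than routine --- each $f_i$ is only $n^{1/2}L$-smooth along its own subspace, with orthogonality giving $\mathbb{E}_i\|\nabla f_i(x)-\nabla f_i(y)\|^2\leq L^2\|x-y\|^2$ --- a subtlety your per-component verification step would have run into.
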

Note that the condition $n\leq \mathcal{O}(\epsilon^{-4})$ in Theorem \ref{theo:lowerbdd} ensures the lower bound $\mathcal{O}(n^{1/2}\epsilon^{-2}) = \mathcal{O}(n+n^{1/2}\epsilon^{-2})$. Therefore, the upper bound in Theorem \ref{thm: ssqn} matches the lower bound in Theorem \ref{theo:lowerbdd} up to a constant factor of relevant parameters, and is thus near-optimal.
 The proof of Theorem \ref{theo:lowerbdd} provided in the Appendix utilizes a specific counterexample function that requires at least $\mathcal{O}(n^{1/2}\epsilon^{-2})$ stochastic gradient accesses, which is inspired by \cite{fang2018spider,carmon2017lower,nesterov2018lectures}.
\begin{remark}
Through setting $n = \mathcal{O}(\epsilon^{-4})$ the lower bound complexity in Theorem \ref{theo:lowerbdd} can achieve $\mathcal{O}(\epsilon^{-4})$.
It is necessary to emphasize that this does not violate the upper bound in the online case, i.e. $ \mathcal{O}(\epsilon^{-3})$ (Theorems~\ref{thm: ssqn-online}-\ref{thm: ssqnmer-online}), since the counterexample established in the lower bound depends not on  the stochastic gradient variance  $\sigma_1^2$ specified in Assumption~\ref{assum5} but the example number $n$.
To obtain the lower bound result for the online case with the additional Assumption~\ref{assum5},  one can just construct a counterexample that requires $\mathcal{O}({\epsilon}^{-3})$ stochastic gradient accesses with the knowledge of $\sigma_1^2$ instead of $n$.
\end{remark}
\subsection{Computational  Complexity}
In the following, we will analyze the time complexity of the proposed algorithms and show that the extra computation costs of computing inverse Hessian approximation matrix and using momentum acceleration are negligible.

First, we analyze the computational cost of Algorithm~\ref{SdLBFGS}. In Step 1, the computation of $\gamma_k^{-1}$ involves two inner product, which takes $2d$ multiplications. In Step 2, the computation involves two inner product and one scalar-vector product, which takes $3d$ multiplications. First recursive loop (\ie, Steps 3 to 5) involves $2m$ scalar-vector multiplications and $m$ vector inner products, which takes $3md$ multiplications. So does the second loop (\ie, Steps 8 to 10). Step 7 involving a scalar-vector product takes $d$ multiplications. Therefor, the whole procedure takes $(6m+6)d$ multiplications.

Then, we turn to Algorithm~\ref{SPIDER-SQN-M}. Step~4 involves scalar-vector products, which takes $2d$ multiplications. In Step 6, the computation of full gradient takes at least $2nd$ multiplications. In Step~8, the computation of stochastic gradients with batch-size $n^{1/2}$ takes $2n^{1/2}d$ multiplications. In Steps 10, $(6m+6)d$ multiplications are necessary  for calling Algorithm~\ref{SdLBFGS}. Steps 11 and 12 involving scalar-vector products need $d$ multiplications. Therefore, the total computational cost in an outer loop involves $ [(6m+6)q + 2n+2n^{1/2}q + 4q]d$ multiplications.

Based on above analyses, the computational cost of other algorithms can be obtained easily. For algorithms without momentum acceleration, one {needs to} omit the extra computation cost ($2d$ multiplications) of computing momentum term. As for algorithms without using  approximate Hessian information, one needs to omit the extra computational cost of calling Algorithm~\ref{SdLBFGS}.

We summarize the computational complexity of each algorithm during an outer loop with $q$ iterations (for finite-sum case there is $\cO(q)=\cO(n^{1/2})$, while for online case there is $\cO(q)=\cO(\epsilon^{-1})$) in Table~\ref{complexity table}.
As shown in Table~\ref{complexity table}, for finite-sum case, the extra computation costs of computing approximate Hessian information and using momentum acceleration take up $\frac{mn^{1/2}}{n+mn^{1/2}}$ in the whole procedure. Since $m$ usually ranges from 5 to 20 as suggested in \cite{nocedal2006numerical} and $n$ is sufficiently large in big data situation, the extra computation thus is negligible. So does the online case, when $\epsilon$ is considerably small. Note that for analyses convenience, we reasonably assume $2d$ multiplications are needed when computing a stochastic gradient for general machine learning problem.
\section{Experiments}\label{sec: exp}
In this section, to demonstrate the promising performance of the proposed algorithms, we compare our methods with some state-of-the-art stochastic quasi-Newton algorithms and stochastic first-order algorithms for nonconvex optimization. Following are brief introductions of algorithms used in our experiments.\\
  \noindent{\bf SpiderBoost} \cite{wang2018spiderboost}: SpiderBoost is a boosting version of SPIDER, which takes up a more aggressive stepsize than SPIDER and thus outperforms SPIDER in practice.\\
  \noindent{\bf SdLBFGSVR} \cite{wang2017stochastic}: SdLBFGSVR is a SQN method (more specifically, stochastic damped L-BFGS method) equipped with the SVRG variance reduction technique.\\
  \noindent{\bf SpiderMED} \cite{zhou2019momentum}: ProxSPIDER-MED \cite{zhou2019momentum} is a proximal method that uses the  epochwise-diminishing momentum scheme to improve the practical performance of SpiderBoost. Especially, ProxSPIDER-MED is the faster one among all momentum variants of SpiderBoost proposed in \cite{zhou2019momentum}. Since our paper does not touch upon nonconvex nonsmooth optimization, we adopt the ProxSPIDER-MED without proximal operator and call it SpiderMED.\\
  \noindent{\bf Our methods}: Our methods include four SpiderSQN (SSQN) type of methods, \ie, SSQN (Aslgorithm \ref{SPIDER-SQN}), SSQN with vanilla momentum scheme (SSQNM, \ie., Algorithm \ref{SPIDER-SQN-M}),  SSQN with  epochwise-restart momentum (SSQNMER) and SSQN with epochwise-diminishing momentum (SSQNMED). Note that SSQNMER and SSQNMED are proposed in section \ref{momentum analysis}.

Follow the experiment setting in \cite{zhou2019momentum}, we choose a fixed mini-batch size $256$ and the epoch length $q$ is set to $2n/256$. When implement the SdLBFGS \cite{wang2017stochastic}, we set the memory size to $m = 5$ as suggested in \cite{nocedal2006numerical}, and fix the $\sigma$  for each comparison.  Moreover, we implement experiments on synthetic data for the complement of real datasets, which are generated as \cite{wang2017stochastic}.\\
{\noindent\bf{Generating Synthetic Data:}} The training and testing points $(a, b)$ are generated in the following manner. First, we generate a sparse vector $a$ with 5\% nonzero components following the uniform distribution on $[0,1]^{n}$, and then set $b = \text{sign}({u}, a)$ for some $u \in \mathbb{R}^n$ drawn from the uniform distribution on $[-1,1]^{n}$.\\
{\noindent\bf{Descriptions of Datasets:}}
We implement all experiments on five public datasets from the LIBSVM \cite{chang2011libsvm} and a synthetic data as the complement to these public datasets is summarized in Algorithm \ref{tab1}. Especially, as for the mnist dataset we use the one-vs-rest technique to convert it to a binary class data.
\begin{table}[!t]
\centering
\caption{Descriptions of Datasets.}
\label{tab1}
\scalebox{1}{\begin{tabular}{@{}cccc@{}}
\toprule
datasets & \#samples & \#features & \#classes \\ \midrule
a9a & 32,561 & 123 & 2 \\
w8a & 64,700 & 300 & 2 \\
ijcnn1 & 141,691 & 22 & 2 \\
mnist & 60,000 & 780 & 2 \\
covtype & 581,012 & 54 & 2 \\
synthetic data & 100,000 & 5,000 & 2 \\\bottomrule
\end{tabular}}
\end{table}

\subsection{Nonconvex Support Vector Machine}
First, above algorithms are applied to solve the nonconvex support vector machine (SVM) problem with a sigmoid loss function:
\begin{align*}
\min_{x\in \mathbb{R}^d} f(x):=\frac{1}{n} \sum_{i=1}^{n} (1-\mathrm{tanh}(b_i\inner{x}{a_i})) + r \|x\|^2,
\end{align*}
where $a_i\in \mathbb{R}^d$ denotes the $i$-th sample and $b_i\in {\pm 1}$ is the corresponding label. In the experiments, the learning rate $\eta$ and regular coefficient $r$ for all algorithms are both fixed as $0.001$. Moreover, in algorithms with momentum scheme $\beta_k$ is fixed as $\eta$, and $\lambda_k$ remains the same for each comparison.

The experiment results on those four datasets are shown in Fig.~\ref{Experment1}, where $f(x)$ is the function value and $f(x^*)$ is a suitable constant for each case. First, as for datasets w8a and ijcnn1 the initial solutions to all algorithms are {drawn from} the
standard norm distribution, while for datasets a9a and mnist they take the original point.
 As Fig.~\ref{Experment1} depicts, all these stochastic quasi-Newton methods (including SdlBFGSVR and four SpiderSQN (SSQN)-type of algorithms) outperform stochastic first-order methods (including Spider and SpiderMED) by a considerably large margin, which demonstrates the promising nature of stochastic quasi-Newton methods for nonconvex optimization. And one can see that the basic algorithm SSQN converges more faster than SdLBFGSVR, which is corresponding to the theoretical result that the proposed method has a lower SFO complexity than SdLBFGSVR. Meanwhile, among the four SSQN-type of algorithms, three algorithms with different momentum schemes all have a better performance than the SSQN. Moreover, among these three algorithms, the one using epochwise-diminishing momentum (SSQNMED) achieves the best performance, while the one using the iterationwise-diminishing momentum (SSQNM) achieves the poorest.
\begin{figure*}[!t]
	\centering
	\begin{subfigure}{0.3\linewidth}
		\includegraphics[width=\linewidth]{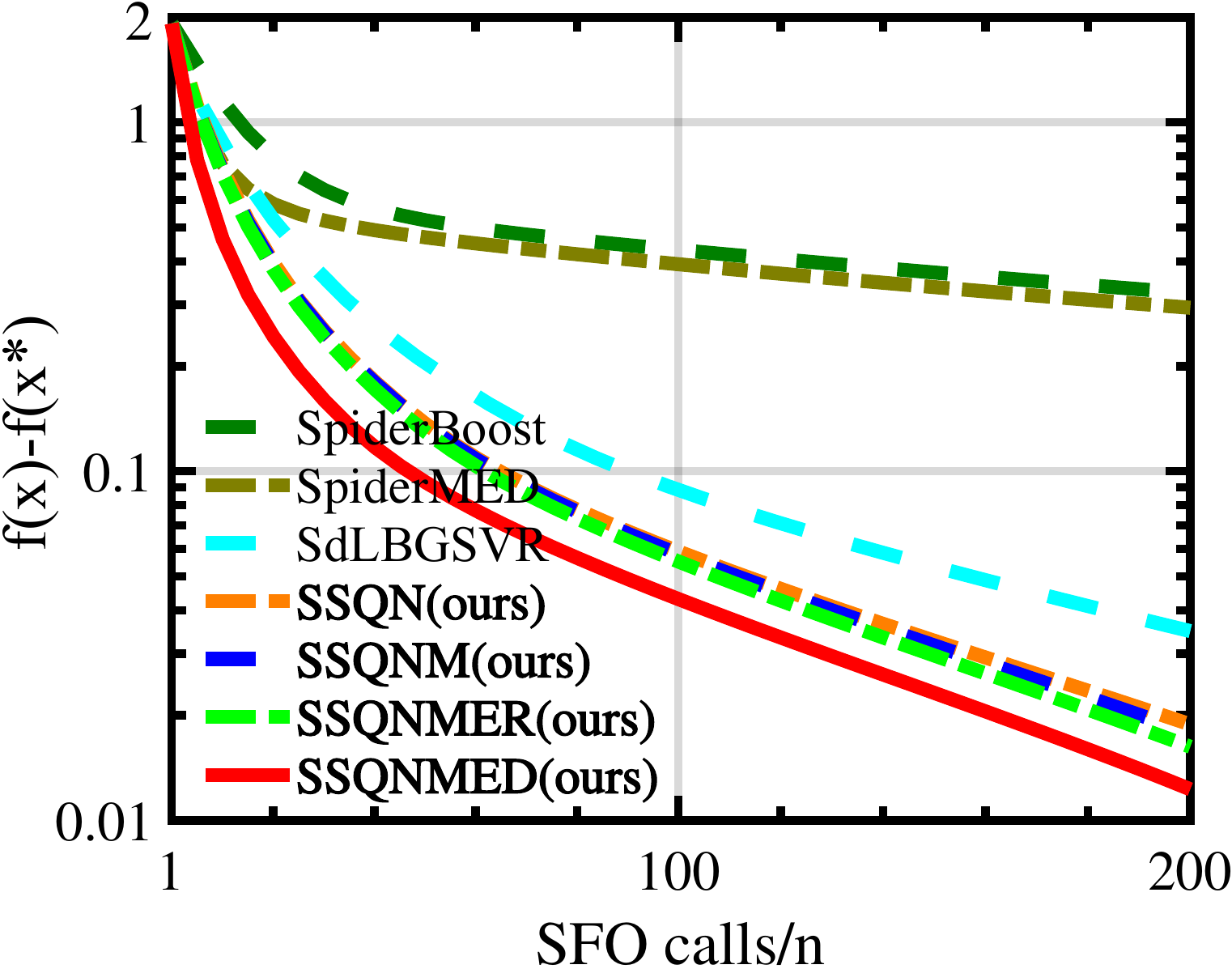}
		\caption{Data: a9a}
	\end{subfigure}
\begin{subfigure}{0.3\linewidth}
		\includegraphics[width=\linewidth]{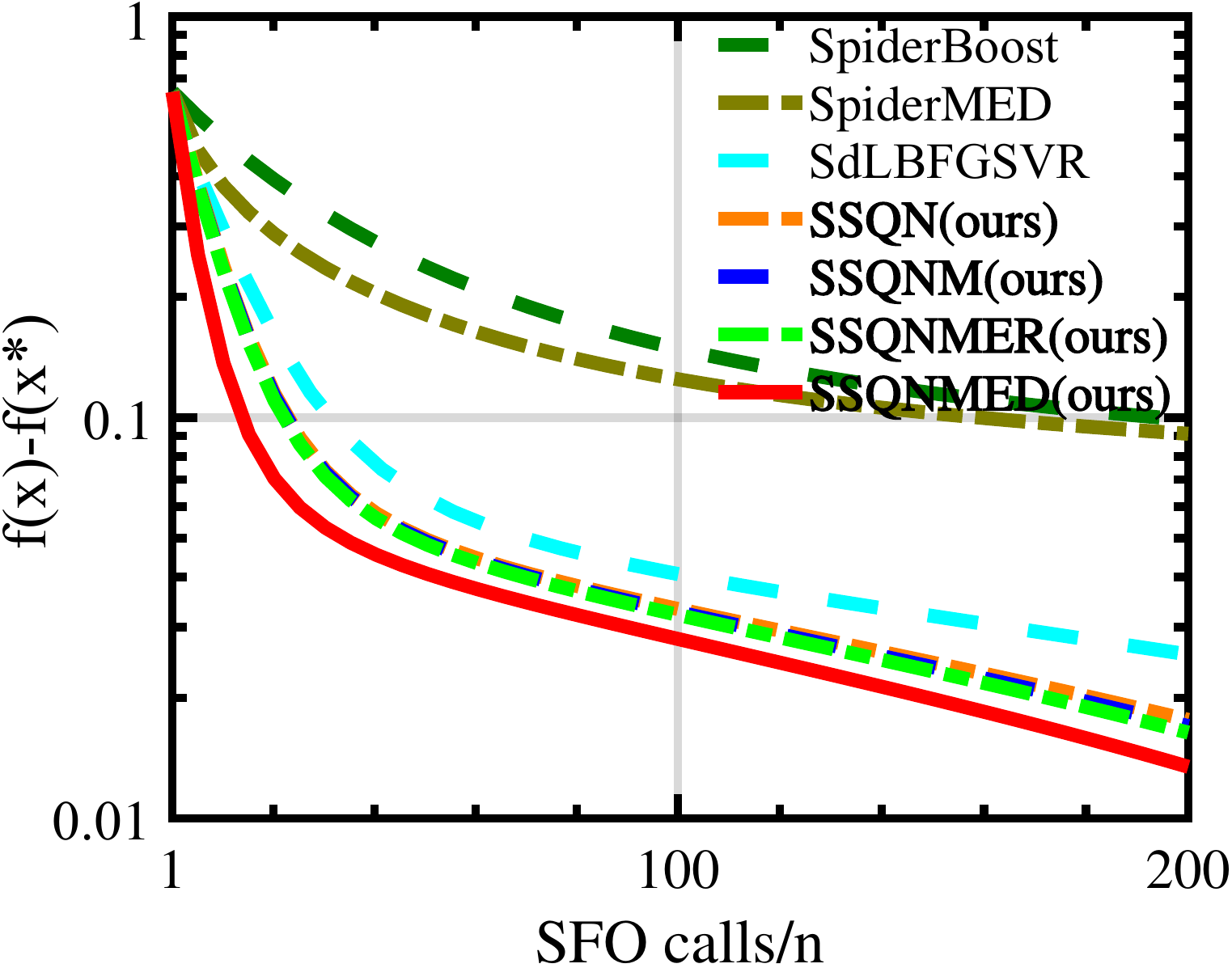}
		\caption{Data: w8a}
	\end{subfigure}
\begin{subfigure}{0.3\linewidth}
		\includegraphics[width=\linewidth]{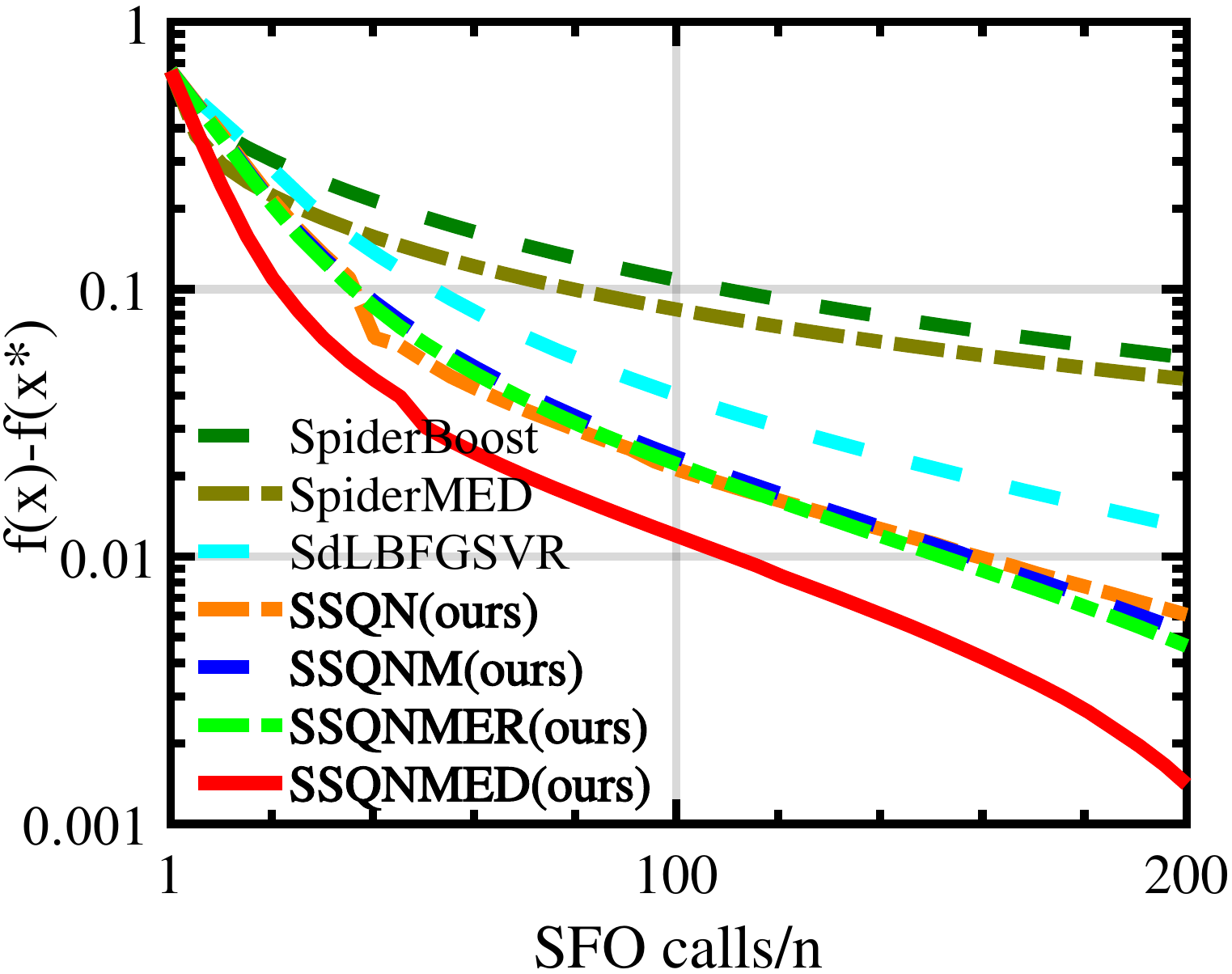}
		\caption{Data: mnist}
	\end{subfigure}%

	\begin{subfigure}{0.3\linewidth}
		\includegraphics[width=\linewidth]{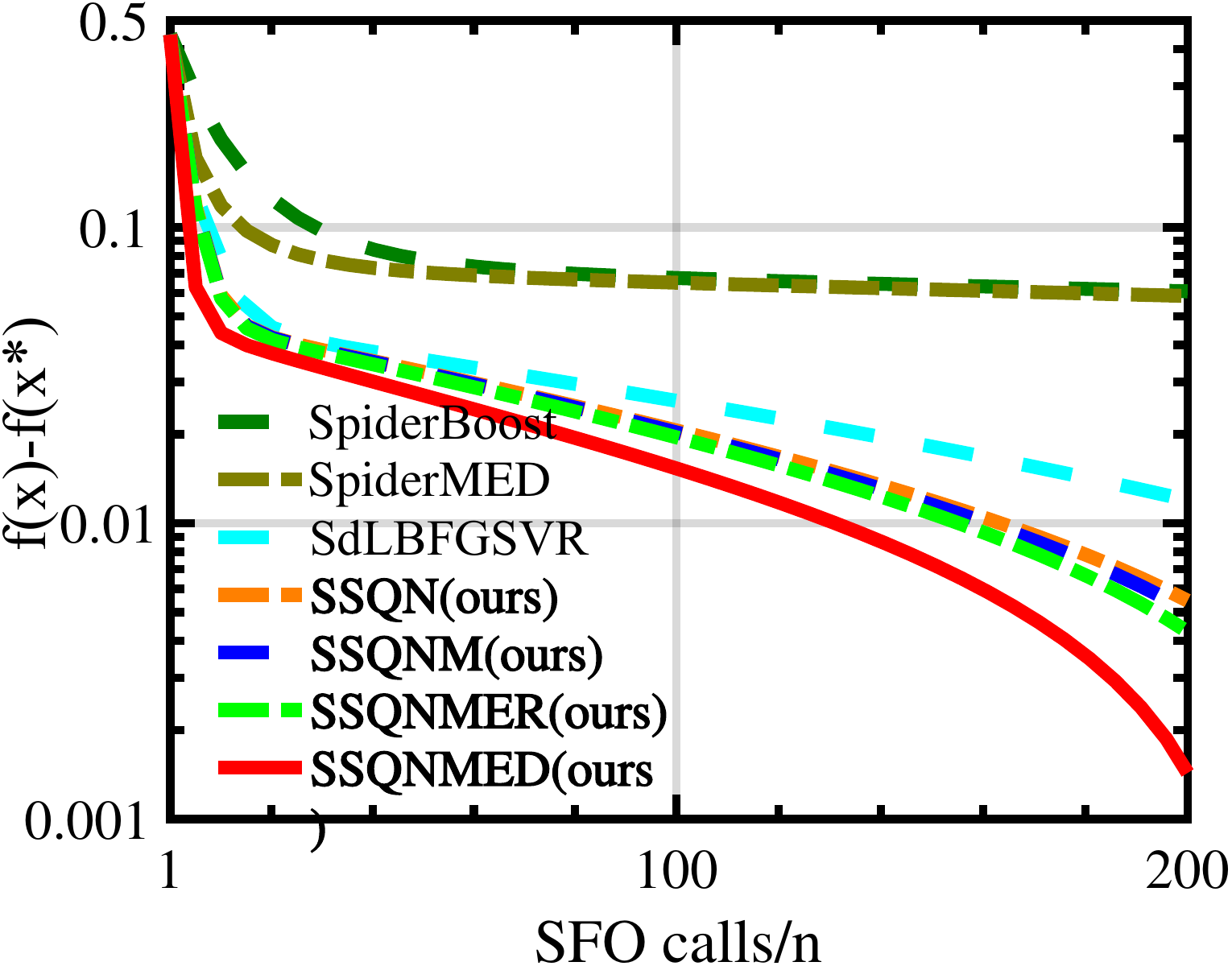}
		\caption{Data: ijcnn1}
	\end{subfigure}%
	\begin{subfigure}{0.3\linewidth}
		\includegraphics[width=\linewidth]{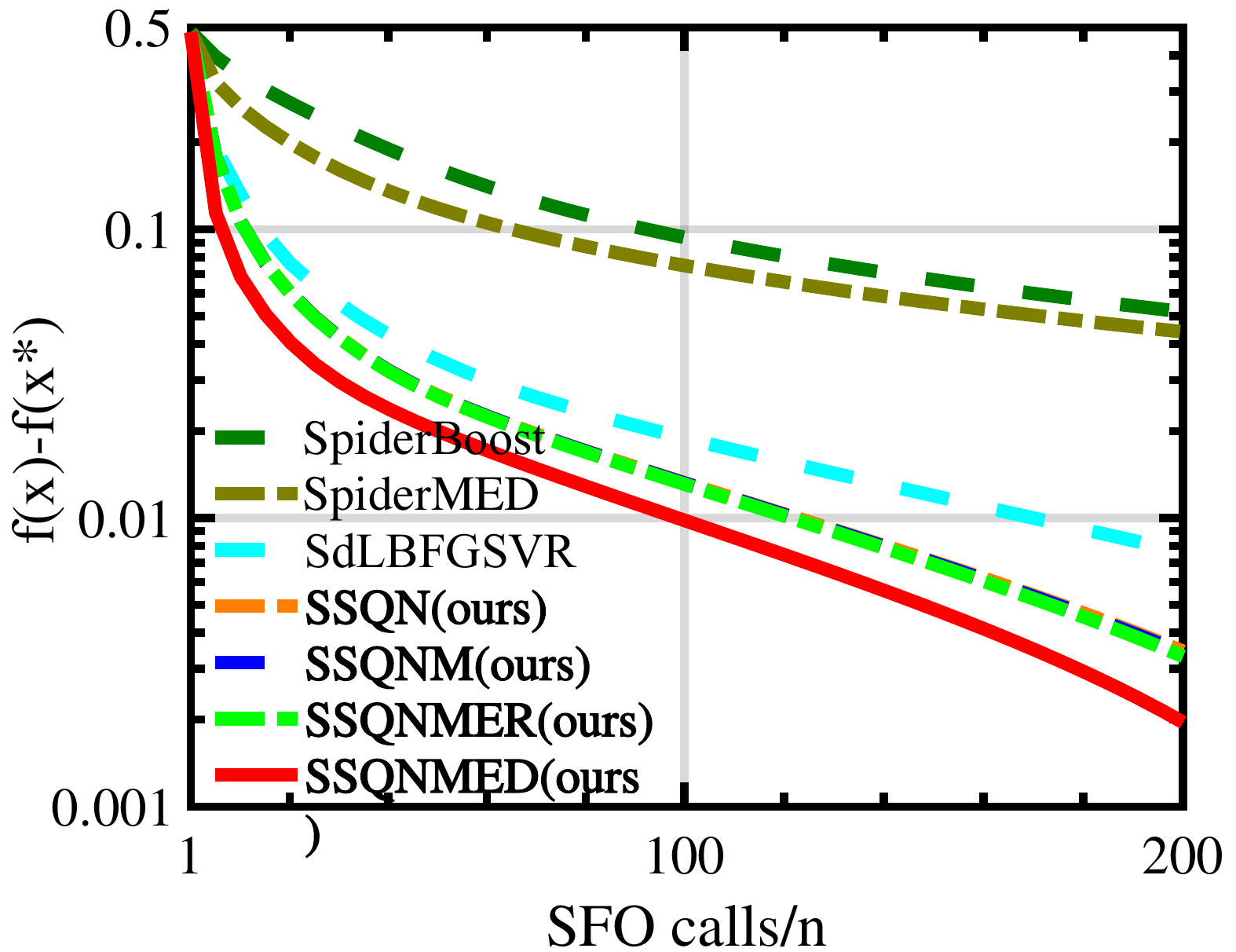}
		\caption{Data: covtype}
	\end{subfigure}%
	\begin{subfigure}{0.3\linewidth}
		\includegraphics[width=\linewidth]{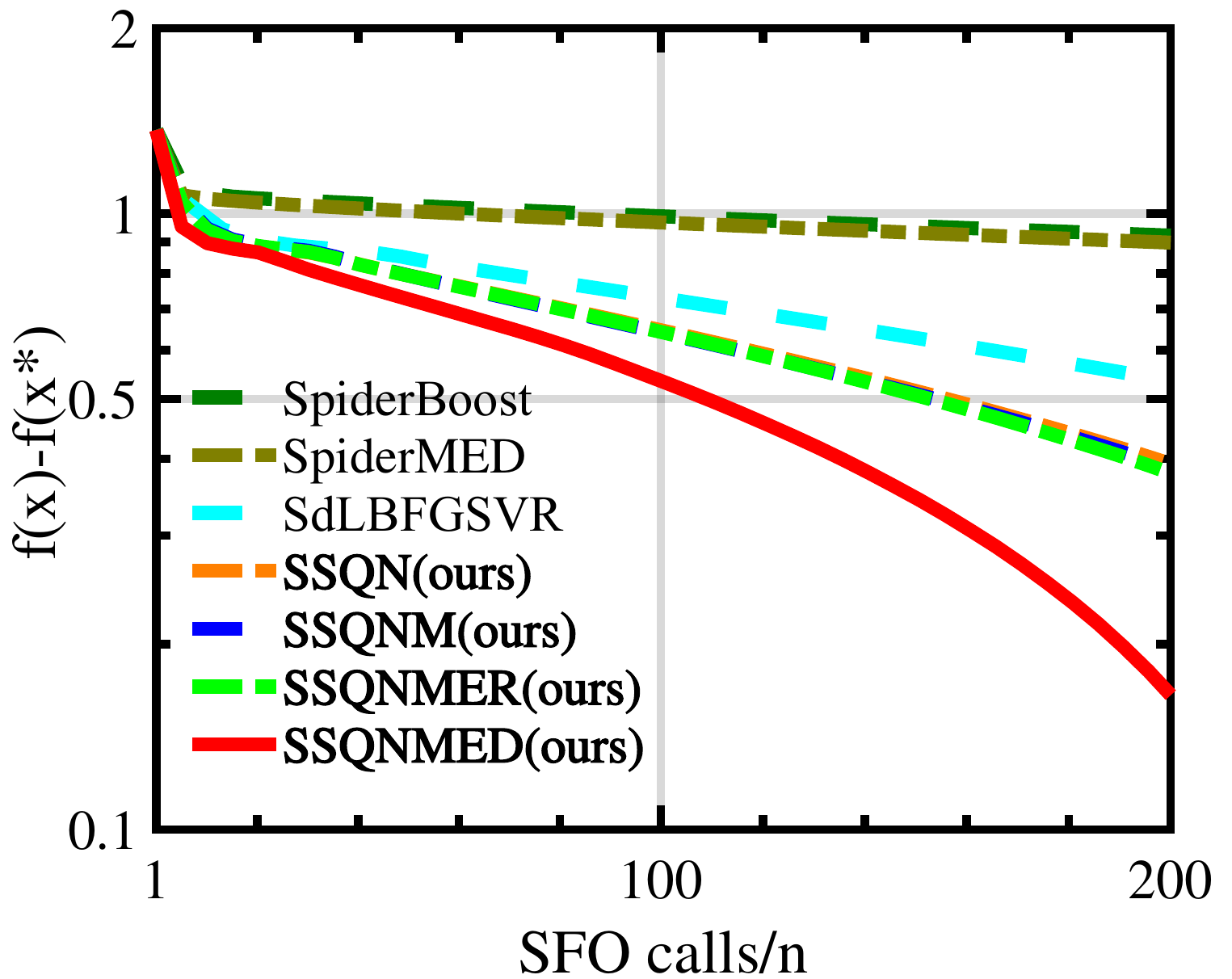}
		\caption{Data: synthetic data}
	\end{subfigure}%
	\caption{Comparison among algorithms for solving nonconvex logistic regression problems.}
\label{Experment3}
\end{figure*}

\subsection{Nonconvex Robust Linear Regression}
We consider comparing these algorithms for solving such a nonconvex robust linear regression problem:
\begin{align*}
\min_{x \in \mathbb{R}^d} f(x):=\frac{1}{n} \sum_{i=1}^{n}  \ell(b_i -\inner{x}{a_i}),
\end{align*}
where the nonconvex loss function is defined as $\ell (x) :=\log (\frac{x^2}{2} + 1)$. The experiment settings are same as those in the nonconvex SVM problem, except that the initial solutions in all cases are {drawn from} the standard norm distribution. The learning curves on the gap between $f(x)$ and $f(x^*)$ are reported in Fig.~\ref{Experment2}. As one can see from Fig.~\ref{Experment2}, the stochastic quasi-Newton methods still have a significantly better performance than the stochastic first-order methods. Also, the proposed four SSQN-type algorithms outperform the SdLBFGSVR with a considerably large margin. In most cases, SSQNMED outperforms SSQNM and SSQNMER by a large gap, except in the dataset mnist where SSQNMER and SSQNMED have similar performances and are both significantly better than that of SSQNM.
\subsection{Nonconvex Logistic Regression}
Comparisons are conducted among all algorithms for solving a nonconvex logistic regression problem:
\begin{align*}
\min_{x \in \mathbb{R}^d} f(x):=\frac{1}{n} \sum_{i=1}^{n}  \ell(b_i, \inner{x}{a_i}) + r \sum_{i=1}^{d} \frac{x_i^2}{1+x_i^2},
\end{align*}
where the loss function $\ell$ is set to be the cross-entropy loss. For this problem, the initial solutions to all algorithms on datasets w8a and a9a are {drawn from} the standard norm distribution, while experiments on datasets ijcnn1 and mnist take the original point. Other experiment settings are same as those of the nonconvex SVM problem. The learning curves on the gap between $f(x)$ and $f(x^* )$ are reported in Fig.~\ref{Experment3}. Obviously, the stochastic quasi-Newton methods outperform those stochastic first-order methods by a significantly large gap. Meanwhile, the proposed four SSQN-type of algorithms all have a better performance than the SdLBFGSVR. As for the four SSQN-type of algorithms, their performance is related to the momentum coefficient setting which means that algorithm with a larger momentum coefficient will converge faster. Moreover, in all cases the SSQNMED has the best performance among four SSQN-type algorithms, and SSQN has the worst.

\section{Conclusion}
In the paper, we presented the novel faster stochastic quasi-Newton (SpiderSQN) methods. Moreover, we proved that the SpiderSQN methods reach the best known SFO complexity of $\mathcal{O}(\min(n+n^{1/2}\epsilon^{-2},\epsilon^{-3}))$ for finding an $\epsilon$-approximated stationary point.  At the same time, we studied the lower bound of SFO complexity of the SpiderSQN methods. As presented in the theoretical results, our methods reach the near-optimal SFO complexity in solving the nonconvex problems. Moreover, we applied three different momentum schemes to SpiderSQN to further improve its practical performance.
%
\section*{Acknowledgment}
We thank the anonymous reviewers for their helpful comments. We also thank the IT Help Desk at University of Pittsburgh. Q.S. Zhang and C. Deng were supported in part by the National Natural Science Foundation of China under Grant 62071361, the National Key R\&D Program of China under Grant 2017YFE0104100, and the China Research Project under Grant 6141B07270429. F.H. Huang and H. Huang were in part supported by U.S. NSF IIS 1836945, IIS 1836938, IIS 1845666, IIS 1852606, IIS 1838627, IIS 1837956. No. 61806093.

\newpage
\onecolumn
\appendix
\section{Proof of Algorithm \ref{thm: ssqn}} \label{proof_of_ssqn}

Throughout the paper, let $n_k = \lceil k/q \rceil$ such that $(n_k-1)q \le k \le n_k q - 1$.
Note that this convergence analysis is mainly following \cite{fang2018spider}. We first present an auxiliary lemma from \cite{fang2018spider}.
\begin{lemma}[\cite{fang2018spider}, Lemma 1]\label{lemma: Zhang}
	Under Assumptions \ref{assum1} and \ref{assum2}, the SPIDER estimator satisfies for all $(n_k-1)q +1 \le k \le n_kq-1$,
	\begin{align}
	\mathbb{E} \|v_k - \nabla f(x_k)\|^2 \le \frac{L^2}{|\xi_k|} \mathbb{E}\|x_k - x_{k-1}\|^2 + \mathbb{E}\|v_{k-1} - \nabla f(x_{k-1})\|^2.
	\end{align}
\end{lemma}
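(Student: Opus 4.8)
The plan is to track the error sequence $e_k := v_k - \nabla f(x_k)$ and exploit that the SPIDER increment is conditionally unbiased, which makes the error a martingale whose squared norm telescopes exactly.

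First I would insert the update $v_k = \nabla f_{\xi_k}(x_k) - \nabla f_{\xi_k}(x_{k-1}) + v_{k-1}$ into $e_k$ and add and subtract $\nabla f(x_{k-1})$ to obtain the decomposition
\[
e_k = w_k + e_{k-1}, \qquad w_k := \big(\nabla f_{\xi_k}(x_k) - \nabla f_{\xi_k}(x_{k-1})\big) - \big(\nabla f(x_k) - \nabla f(x_{k-1})\big).
\]
Let $\mathcal{F}_{k-1}$ be the $\sigma$-field generated by all randomness through the construction of $x_k$, so that $x_k$, $x_{k-1}$, $v_{k-1}$, and $e_{k-1}$ are $\mathcal{F}_{k-1}$-measurable whereas $\xi_k$ is fresh. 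Because the indices in $\xi_k$ are drawn uniformly, $\mathbb{E}[\nabla f_{\xi_k}(x)\mid\mathcal{F}_{k-1}] = \nabla f(x)$ for the fixed points $x\in\{x_k,x_{k-1}\}$, hence $\mathbb{E}[w_k\mid\mathcal{F}_{k-1}] = 0$. The cross term then vanishes, $\mathbb{E}\langle w_k, e_{k-1}\rangle = \mathbb{E}\langle \mathbb{E}[w_k\mid\mathcal{F}_{k-1}], e_{k-1}\rangle = 0$, and I obtain the exact identity $\mathbb{E}\|e_k\|^2 = \mathbb{E}\|w_k\|^2 + \mathbb{E}\|e_{k-1}\|^2$.

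Next I would bound $\mathbb{E}\|w_k\|^2$. Writing $g_i := \nabla f_i(x_k) - \nabla f_i(x_{k-1})$ and $\bar g := \nabla f(x_k) - \nabla f(x_{k-1})$, one has $w_k = \frac{1}{|\xi_k|}\sum_{i\in\xi_k}(g_i - \bar g)$, a normalized sum of $|\xi_k|$ conditionally i.i.d.\ zero-mean terms whose pairwise cross terms vanish in expectation, so
\[
\mathbb{E}\big[\|w_k\|^2 \mid \mathcal{F}_{k-1}\big] = \frac{1}{|\xi_k|}\,\mathbb{E}_i\|g_i - \bar g\|^2 \le \frac{1}{|\xi_k|}\,\mathbb{E}_i\|g_i\|^2,
\]
where $\mathbb{E}_i$ is expectation over a single uniform index and the inequality is the elementary $\mathbb{E}\|X - \mathbb{E}X\|^2 \le \mathbb{E}\|X\|^2$. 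Assumption \ref{assum2} gives $\|g_i\| = \|\nabla f_i(x_k) - \nabla f_i(x_{k-1})\| \le L\|x_k - x_{k-1}\|$, hence $\mathbb{E}[\|w_k\|^2\mid\mathcal{F}_{k-1}] \le \frac{L^2}{|\xi_k|}\|x_k - x_{k-1}\|^2$. Taking total expectations and substituting into the telescoping identity from the previous step delivers the stated bound.

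The computation is largely routine; the conceptual crux, and the reason SPIDER controls variance, is the vanishing of the cross term $\mathbb{E}\langle w_k, e_{k-1}\rangle$. This relies on choosing the filtration correctly and on the fact that $x_k$ is generated \emph{before} $\xi_k$ is sampled (it is a deterministic function of $v_{k-1}$ and the past), so that $w_k$ is a genuine martingale difference relative to $\mathcal{F}_{k-1}$. This is exactly what prevents cross terms from accumulating across iterations and makes the per-step variance recursion purely additive, which is the property the downstream convergence analysis will iterate within each length-$q$ epoch.
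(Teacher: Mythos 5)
Your proof is correct, and it is essentially the standard argument: the paper itself does not prove this lemma but imports it verbatim from \cite{fang2018spider} (Lemma~1), whose proof is exactly the martingale-difference decomposition you give --- conditional unbiasedness of the mini-batch increment kills the cross term, the i.i.d.\ with-replacement sampling yields the $1/|\xi_k|$ variance factor, and Assumption~\ref{assum2} bounds each increment by $L\|x_k-x_{k-1}\|$. Your observation that $x_k$ is measurable with respect to the history before $\xi_k$ is sampled is indeed the point that makes the filtration argument valid here.
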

Telescoping Algorithm \ref{lemma: Zhang} over $k$ from $(n_k-1)q +1$ to $k$, we obtain that
\begin{align}
\mathbb{E} \|v_k - \nabla f(x_k)\|^2 &\leq \sum_{i=(n_k-1)q}^{k-1}\frac{L^2}{|\xi_k|} \mathbb{E}\|x_{i+1} - x_{i}\|^2 + \mathbb{E}\|v_{(n_k-1)q} - \nabla f(x_{(n_k-1)q})\|^2 \nonumber \\
&\leq  \sum_{i=(n_k-1)q}^{k}\frac{L^2}{|\xi_k|} \mathbb{E}\|x_{i+1} - x_{i}\|^2 + \mathbb{E}\|v_{(n_k-1)q} - \nabla f(x_{(n_k-1)q})\|^2.  \label{eq: variance bound}
\end{align}
Note that the above inequality also holds for $k = (n_k-1)q$, which can be simply checked by
plugging $k = (n_k- 1)q$ into above inequality. As for finite-sum case, when $\textrm{mod}(k,q) =0$ there is $v_k = \nabla f(x_k)$ for all $k$ such that $\mathbb{E}\|v_{k} - \nabla f(x_{k})\|^2=0$, and then we obtain the following bound for finite-sum case
\begin{lemma}\label{lemma: dound}
	Under Assumptions \ref{assum1} and \ref{assum2}, the SPIDER estimator satisfies for all $k \in \mathbb{N}$,
\begin{align}
\mathbb{E} \|v_k - \nabla f(x_k)\|^2 \leq \sum_{i=(n_k-1)q}^{k-1}\frac{L^2}{|\xi_k|} \mathbb{E}\|x_{i+1} - x_{i}\|^2
\leq  \sum_{i=(n_k-1)q}^{k}\frac{L^2}{|\xi_k|} \mathbb{E}\|x_{i+1} - x_{i}\|^2  \label{eq: variance bound1}
\end{align}
\end{lemma}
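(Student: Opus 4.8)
The plan is to obtain Lemma \ref{lemma: dound} as an immediate consequence of the telescoped variance bound \eqref{eq: variance bound} already established above, by exploiting the structure of the finite-sum SPIDER estimator at epoch boundaries. Recall that \eqref{eq: variance bound} was produced by telescoping the single-step recursion of Lemma \ref{lemma: Zhang} from the first iterate of the current epoch, index $(n_k-1)q$, up to $k$, giving
$$\mathbb{E}\|v_k - \nabla f(x_k)\|^2 \le \sum_{i=(n_k-1)q}^{k-1} \frac{L^2}{|\xi_k|}\mathbb{E}\|x_{i+1}-x_i\|^2 + \mathbb{E}\|v_{(n_k-1)q} - \nabla f(x_{(n_k-1)q})\|^2.$$
The first step, therefore, is simply to invoke this inequality; since Lemma \ref{lemma: Zhang} already carries Assumptions \ref{assum1} and \ref{assum2}, no new smoothness estimation is required.

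The crux is to dispose of the boundary term $\mathbb{E}\|v_{(n_k-1)q} - \nabla f(x_{(n_k-1)q})\|^2$. By the convention $n_k = \lceil k/q\rceil$, the index $(n_k-1)q$ is exactly the first iterate of the epoch containing $k$, so $\mathrm{mod}((n_k-1)q, q) = 0$. In the finite-sum case, Algorithm \ref{SPIDER-SQN} computes the exact full gradient $v_{(n_k-1)q} = \nabla f(x_{(n_k-1)q})$ at every such index, whence $\mathbb{E}\|v_{(n_k-1)q} - \nabla f(x_{(n_k-1)q})\|^2 = 0$. Substituting this into the telescoped bound erases the additive remainder and yields the first inequality of the lemma.

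Finally, the second inequality comes for free: extending the summation index from $k-1$ to $k$ appends the single nonnegative summand $\frac{L^2}{|\xi_k|}\mathbb{E}\|x_{k+1}-x_k\|^2 \ge 0$, so the right-hand side can only grow. I would also verify the degenerate case $k = (n_k-1)q$, where the first sum is empty and the bound reads $\mathbb{E}\|v_k-\nabla f(x_k)\|^2 \le 0$; this is consistent, since $v_k = \nabla f(x_k)$ holds exactly at the epoch start. I do not anticipate a genuine obstacle here, as the entire argument is bookkeeping on the epoch index; the only point demanding care is aligning the summation limits with the definition of $n_k$ so that the boundary term lands precisely on a full-gradient iterate.
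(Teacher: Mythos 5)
Your proposal is correct and takes essentially the same route as the paper: telescope Lemma~\ref{lemma: Zhang} within the current epoch, and then eliminate the boundary term $\mathbb{E}\|v_{(n_k-1)q}-\nabla f(x_{(n_k-1)q})\|^2$ by noting that in the finite-sum case the algorithm computes the exact full gradient whenever $\mathrm{mod}(k,q)=0$, so this term is zero. Your remaining observations (the second inequality follows by appending a single nonnegative summand, and the degenerate case $k=(n_k-1)q$ is consistent) are the same bookkeeping checks the paper itself makes.
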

Then, we return to the proof of Algorithm \ref{thm: ssqn}.
\begin{proof}
Consider any iteration $k$ of the algorithm. By smoothness of $f$, we obtain that
\begin{align}
f(x_k) &\overset{(i)}\le f(x_{k-1}) + \inner{\nabla f(x_{k-1})}{x_k - x_{k-1}} + \frac{L}{2}\|x_k - x_{k-1}\|^2 \nonumber \\
&= f(x_{k-1}) + \inner{\nabla f(x_{k-1})}{- \eta H_{k-1}v_{k-1}} + \frac{L\eta^2}{2}\|H_{k-1}v_{k-1}\|^2 \nonumber \\
&= f(x_{k-1}) - \eta \inner{\nabla f(x_{k-1})-v_{k-1}}{H_{k-1}v_{k-1}} -  \eta \inner{v_{k-1}}{H_{k-1}v_{k-1}} + \frac{L\eta^2}{2}\|H_{k-1}v_{k-1}\|^2 \nonumber\\
&\overset{(ii)}\le f(x_{k-1}) - \eta \inner{\nabla f(x_{k-1})-v_{k-1}}{H_{k-1}v_{k-1}} -  \eta \|v_{k-1}\|\|H_{k-1}v_{k-1}\| +\frac{L\eta^2}{2}\|H_{k-1}v_{k-1}\|^2,
\end{align}

where (i) uses the Lipschitz continuity of $\nabla f$ and (ii) follows from $\inner{a}{b}\leq\|a\|\|b\|$. Rearranging the above inequality yields that
\begin{align}
	f(x_k) &\le  f(x_{k-1}) - \eta(\|H_{k-1}\|-\frac{L\eta \|H_{k-1}\|^2}{2}) \|v_{k-1}\|^2 + \eta \|H_{k-1}\| \|\nabla f(x_{k-1})-v_{k-1}\| \|v_{k-1}\|
\nonumber \\
& \overset{(i)}\leq  f(x_{k-1}) - \eta(\|H_{k-1}\|-\frac{L\eta \|H_{k-1}\|^2}{2}) \|v_{k-1}\|^2
+ \frac{\eta \|H_{k-1}\|}{2} (\|\nabla f(x_{k-1})-v_{k-1}\|^2 + \|v_{k-1}\|^2)
\nonumber \\
& \overset{(ii)} \leq  f(x_{k-1}) - \eta(\frac{\sigma_{\mathrm{min}}}{2}-\frac{L\eta \sigma_{\mathrm{max}}^2}{2}) \|v_{k-1}\|^2
+ \frac{\eta \sigma_{\mathrm{max}}}{2}\|\nabla f(x_{k-1})-v_{k-1}\|^2
. \label{eq: 15}
\end{align}
where (i)  uses the inequality that $\inner{x}{y}\le \frac{\|x\|^2+\|y\|^2}{2}$ for $x,y\in \mathbb{R}^d$, (ii) follows from Assumption~\ref{assum3}. Taking expectation on both sides of the above inequality yields that
	\begin{align}
	\mathbb{E} &f(x_{k+1}) \nonumber \\
	 &\le \mathbb{E} f(x_k) + \frac{\eta \sigma_{\mathrm{max}}}{2} \mathbb{E}\|\nabla f(x_k)-v_k\|^2  - (\frac{\eta\sigma_{\mathrm{min}}}{2} - \frac{L\eta^2 \sigma_{\mathrm{max}}^2}{2}) \mathbb{E}\|v_k\|^2  \nonumber\\
	&\overset{(i)}{\le}  \mathbb{E} f(x_k) + \frac{\eta \sigma_{\mathrm{max}}}{2} \sum_{i=(n_k-1)q}^{k }\frac{L^2}{|\xi_k|} \mathbb{E}\|x_{i+1} - x_{i}\|^2  - (\frac{\eta\sigma_{\mathrm{min}}}{2} - \frac{L\eta^2 \sigma_{\mathrm{max}}^2}{2}) \mathbb{E}\|v_k\|^2
\nonumber\\
	&\overset{(ii)}= \mathbb{E} f(x_k) + \frac{\eta^3\sigma_{\mathrm{max}}^3}{2} \sum_{i=(n_k-1)q}^{k}\frac{L^2}{|\xi_k|} \mathbb{E}\|v_i\|^2 -  (\frac{\eta\sigma_{\mathrm{min}}}{2} - \frac{L\eta^2 \sigma_{\mathrm{max}}^2}{2}) \mathbb{E}\|v_k\|^2, \label{eq: 1}
	\end{align}
	where (i) follows from Eq.~(\ref{eq: variance bound1}), and (ii) follows from the facts that $x_{k+1} = x_k - \eta H_k v_k$ and Algorithm \ref{assum3}. Next, telescoping Eq.~(\ref{eq: 1}) over $k$ from $(n_k-1)q$ to $k$ where $k \le n_kq-1$ and noting that for $(n_k-1)q \le j \le  n_kq-1$, $n_j = n_k$ , we obtain
	\begin{align}
	\mathbb{E} &f(x_{k+1}) \nonumber \\
	 &\le \mathbb{E} f(x_{(n_k-1) q}) + \frac{\eta^3 \sigma_{\mathrm{max}}^3}{2} \sum_{j=(n_k - 1)q}^{k} \sum_{i=(n_k-1)q}^{j}\frac{L^2}{|\xi_k|} \mathbb{E}\|v_i\|^2  - (\frac{\eta \sigma_{\mathrm{min}}}{2} - \frac{L\eta^2 \sigma_{\mathrm{max}}^2}{2}) \sum_{j=(n_k - 1)q}^{k}\mathbb{E}\|v_j\|^2 \nonumber \\
	&\overset{(i)}{\le}   \mathbb{E} f(x_{(n_k-1) q}) + \frac{\eta^3 \sigma_{\mathrm{max}}^3}{2} \sum_{j=(n_k - 1)q}^{k} \sum_{i=(n_k-1)q}^{k}\frac{L^2}{|\xi_k|} \mathbb{E}\|v_i\|^2 - (\frac{\eta \sigma_{\mathrm{min}}}{2} - \frac{L\eta^2 \sigma_{\mathrm{max}}^2}{2}) \sum_{j=(n_k - 1)q}^{k}\mathbb{E}\|v_j\|^2 \nonumber \\
	&\overset{(ii)}{\le} \mathbb{E} f(x_{(n_k-1) q}) + \frac{\eta^3\sigma_{\mathrm{max}}^3L^2q}{2|\xi_k|} \sum_{i=(n_k - 1)q}^{k}   \mathbb{E}\|v_i\|^2 - (\frac{\eta\sigma_{\mathrm{min}} }{2} - \frac{L\eta^2\sigma_{\mathrm{max}}^2}{2}) \sum_{j=(n_k - 1)q}^{k}\mathbb{E}\|v_j\|^2 \nonumber \\
	&= \mathbb{E} f(x_{(n_k-1) q}) - \sum_{i=(n_k - 1)q}^{k} \left(\frac{\eta\sigma_{\mathrm{min}} }{2} - \frac{L\eta^2 \sigma_{\mathrm{max}}^2}{2}- \frac{\eta^3\sigma_{\mathrm{max}}^3L^2q }{2|\xi_k| }\right) \mathbb{E}\|v_i\|^2  \nonumber \\
	&\overset{(iii)}{=} \mathbb{E} f(x_{(n_k-1) q}) -  \sum_{i=(n_k - 1)q}^{k}  \beta^* \mathbb{E}\|v_i\|^2 , \label{eq: 2}
	\end{align}
	where (i)  extends the summation of the second term from $j$ to $k$, (ii) follows from the fact that $k  \leqslant n_kq-1$. Thus, we obtain
	\begin{align}
	\sum_{j=(n_k - 1)q}^{k} &\sum_{i=(n_k-1)q}^{k}\frac{L^2}{|\xi_k|} \mathbb{E}\|v_i\|^2 \le \frac{(k+q-n_kq +1 )L^2}{|\xi_k|} \sum_{i=(n_k-1)q}^{k}  \mathbb{E}\|v_i\|^2   \le \frac{qL^2}{|\xi_k|} \sum_{i=(n_k-1)q}^{k}  \mathbb{E}\|v_i\|^2,
	\end{align}
	and (iii) follows from $\beta^* = \frac{\eta\sigma_{\mathrm{min}} }{2} - \frac{L\eta^2 \sigma_{\mathrm{max}}^2}{2}- \frac{\eta^3\sigma_{\mathrm{max}}^3L^2q }{2|\xi_k| }$.
	
	We continue the proof by further driving
	\begin{align}
	\mathbb{E} f(x_{K}) - &\mathbb{E}f(x_0) \nonumber \\
	&= (\mathbb{E} f(x_{q}) - \mathbb{E}f(x_0)) + (\mathbb{E} f(x_{2q}) - \mathbb{E}f(x_q)) +\cdots + (	\mathbb{E} f(x_{K}) - \mathbb{E} f(x_{(n_k-1) q})) \nonumber \\
	&\overset{(i)}{\leq}    \sum_{i=0}^{q-1} \beta^* \mathbb{E}\|v_i\|^2 -    \sum_{i=q}^{2q-1}\beta^* \mathbb{E}\|v_i\|^2 - \cdots
-   \sum_{i=(n_K -1)q}^{K-1} \beta^* \mathbb{E}\|v_i\|^2 \nonumber \\
	&=   \sum_{i=0}^{K-1 }  \beta^* \mathbb{E}\|v_i\|^2 ,
	\end{align}
	where (i) follows from Eq.~(\ref{eq: 2}).
	Note that $\mathbb{E} f(x_{K}) \ge f^* \triangleq \inf_{x\in \mathbb{R}^d} f(x)$. Hence,  the above inequality implies that
	\begin{align}
	\sum_{i=0}^{K-1}  \beta^* \mathbb{E}\|v_i\|^2    \le  f(x_0) -  f^*. \label{total_sum}
	\end{align}
	We next bound $\mathbb{E}  \|\nabla f(x_\xi)\|^2$, where $\xi$ is selected uniformly at random from $\{0,\ldots,K-1\}$. Observe that
	\begin{align}
	\mathbb{E}  \|\nabla f(x_\xi)\|^2  = \mathbb{E}  \|\nabla f(x_\xi) - v_\xi + v_\xi\|^2 \leq 2\mathbb{E}  \|\nabla f(x_\xi) - v_\xi \|^2 + 2\mathbb{E}  \| v_\xi \|^2 \label{final_0}.
	\end{align}
	Next, we bound the two terms on the right hand side of the above inequality. First, note that
	\begin{align}
	\mathbb{E} \|v_\xi\|^2  = \frac{1}{K }\sum_{i=0}^{K-1} \mathbb{E} \|v_i\|^2 \le  \frac{f(x_0) -  f^*}{K\beta^* }, \label{eq: 5}
	\end{align}
	where the last inequality follows from Eq.~(\ref{total_sum}). On the other hand, note that
	\begin{align}
	\mathbb{E}  \|\nabla f(x_\xi)-v_\xi\|^2  &\overset{(i)}{\le}\mathbb{E}    \sum_{i=(n_\xi-1)q}^{\xi }\frac{L^2}{|\xi_k| } \mathbb{E}\|x_{i+1} - x_{i}\|^2 +\mathbb{E}   \sum_{i=(n_\xi-1)q}^{\xi }\frac{L^2\eta^2\sigma_{\mathrm{max}}^2}{ |\xi_k|   } \mathbb{E}\|v_i\|^2  \nonumber \\
	&\overset{(iii)}{\leq}  \mathbb{E}   \sum_{i=(n_\xi-1)q}^{\min\{(n_\xi)q   - 1, K-1\} }\frac{L^2\eta^2\sigma_{\mathrm{max}}^2}{ |\xi_k| } \mathbb{E}\|v_i\|^2  \overset{(iv)}{\leq}  \frac{q}{K } \sum_{i=0}^{K-1} \frac{L^2\eta^2\sigma_{\mathrm{max}}^2}{|\xi_k|}\mathbb{E}\|v_{i}\|^2 \nonumber \\
	&\overset{(v)}{\le}    \frac{ L^2 \eta^2\sigma_{\mathrm{max}}^2q}{K|\xi_k|\beta^*} \left( f(x_0) - f^*  \right), \label{final_2}
	\end{align}
	where (i) follows from Eqs.~(\ref{eq: variance bound}) and (\ref{eq: variance bound1}), (ii) follows from the fact that $x_{k+1} = x_k - \eta H_k v_k$ and Assumption \ref{assum3}, (iii) follows from the definition of $n_\xi$, which implies $\xi \leqslant \min \{(n_\xi) q-1,K-1  \} $, (iv) follows from the fact that the probability that $n_\xi = 1,2,\cdots,n_K $   is less than or equal to $q/(K)$, and (v) follows from Eq.~(\ref{eq: 5}).
	
	Substituting Eqs.~(\ref{eq: 5}) and (\ref{final_2}) into Eq.~(\ref{final_0}), we obtain
	\begin{align}
	\mathbb{E} \|\nabla f(x_\xi)\|^2  &\le  \frac{2\left(f(x_0) -  f^*\right)}{K\beta^* } +  \frac{2 L^2 \eta^2\sigma_{\mathrm{max}}^2q}{K|\xi_k|\beta^*} \left( f(x_0) - f^*  \right) \nonumber \\
	&= \frac{2}{K\beta^*} \left(1+   \frac{ L^2 \eta^2\sigma_{\mathrm{max}}^2q}{ |\xi_k| }  \right)\left( f(x_0) -  f^*  \right)
. \label{final1}
	\end{align}
\end{proof}

Next we set the parameters as

\begin{align}
	 S_1 = n , q =\sqrt{n} , \xi_k = \sqrt{n}, \text{ and }  \eta = \frac{c}{L\sigma_{\mathrm{max}}m} \ , \label{finite_sum_parameter_setting}
\end{align}
where $c = \sigma_{\mathrm{min}} / \sigma_{\mathrm{max}}\le1$, and $m = (1 + \sqrt {5} )/2$. Given the parameters setting of $S_1$, $q$, and $\xi_k$ the value of $m
$ is determined as follow
\begin{align}
	\beta^* &= \frac{\eta\sigma_{\mathrm{min}} }{2} - \frac{L\eta^2 \sigma_{\mathrm{max}}^2}{2}- \frac{\eta^3\sigma_{\mathrm{max}}^3L^2 }{2 } \nonumber \\
&\overset{}{=} \frac{1}{2L} ({L\eta\sigma_{\mathrm{min}} }- {L^2\eta^2 \sigma_{\mathrm{max}}^2}- {\eta^3\sigma_{\mathrm{max}}^3L^3 }) \nonumber \\
&\overset{(i)}{=} \frac{c^2}{2Lm^3} (m^2- m-c) \nonumber \\ \label{beta1}
\end{align}
where (i) follows from the definition of $\eta$ together with the problem independent parameter $c = \sigma_{\mathrm{min}} / \sigma_{\mathrm{max}}
\le1$. When $c = 1$ this reduces to the SpiderBoost algorithm with steosize $\eta$ scaled by $\sigma_{\mathrm{min}}$ (or $\sigma_{\mathrm{max}}$). Next, we should determine a suitable value of $m$ to ensure $\beta^*>0$ i.e.,
\begin{align}
	\beta^* &\overset{}{=} \frac{c^2}{2Lm^3} (m^2- m-c) >0 \nonumber \\\label{beta2}
\end{align}
it is sufficient to ensure $m^2- m-c> 0$. Thus, we obtain $m> (1 + \sqrt {1 + 4c} )/2$. In the Spider-SQN method there is $c<1$, and we can let $m = (1 + \sqrt {5} )/2$. Plugging $m = (1 + \sqrt {5} )/2$ into Eq.~(\ref{beta1}) we obtain
\begin{align}
	\beta^* = \frac{c^2}{2Lm^3} (1-c)>0. \label{finte_sum_cond}
\end{align}
therefore,  $m = (1 + \sqrt {5} )/2$ is reasonable and thus $\eta = \frac{(1+\sqrt{5})\sigma_{\mathrm{min}}}{2L\sigma_{\mathrm{max}}^2}$. Plugging Eqs.~(\ref{finite_sum_parameter_setting}) and (\ref{finte_sum_cond}) into Eq.~(\ref{final1}), we obtain that, after $K$ iterations, the output of SpiderBoost satisfies
\begin{align}
	\mathbb{E} \|\nabla f(x_\zeta)\|^2  \le \frac{2(1+\frac{c^2}{m^2})}{K \beta^*}  \left( f(x_0) -  f^*  \right)
\end{align}
To ensure $\mathbb{E} \|\nabla f(x_\zeta)\|   \leqslant \epsilon$, it is sufficient to ensure $\mathbb{E} \|\nabla f(x_\zeta)\|^2   \leqslant \epsilon^2$ (because $\left( \mathbb{E} \|\nabla f(x_\zeta)\|\right)^2\leq \mathbb{E} \|\nabla f(x_\xi)\|^2 $ due to Jensen's inequality). Thus, we need the total number $K$ of iterations  satisfies that  $ \frac{2(1+\frac{c^2}{m^2})}{K \beta^*}  \left( f(x_0) -  f^*  \right) \leq \epsilon^2$, which gives
\begin{align}
	 K =  \frac{2(1+\frac{c^2}{m^2})/\beta^*}{\epsilon^2 }  \left( f(x_0) -  f^*  \right). \label{finite_sum_number_of_iteration}
\end{align}
Then, the total SFO complexity is given by
\begin{align*}
	 \left\lceil  \frac{K}{q} \right\rceil \cdot  S_1 + K\cdot \xi_k \leqslant (K+q)\cdot \frac{S_1}{q} + K\cdot \xi_k =  K\sqrt{n} + n + K\sqrt{n}  = O(\sqrt{n} \epsilon^{-2} + n),
\end{align*}
where the last equation follows from Eq.~(\ref{finite_sum_number_of_iteration}), thus the SFO complexity of Algorithm \ref{SPIDER-SQN} is $O(\sqrt{n} \epsilon^{-2} + n)$.

\section{Proof of Algorithm \ref{thm: ssqnm}}
\subsection{Auxiliary Lemmas for Analysis of Algorithm \ref{SPIDER-SQN-M}}
Note that in algorithm utilizing momentum scheme the $\beta_k$ remains the same for all $k$, thus we use $\beta$ for notation brevity. First, we collect some auxiliary results that facilitate the analysis of Algorithm \ref{SPIDER-SQN-M}. For any $k\in \mathbb{N}$, denote $\tau(k) \in \mathbb{N}$ the unique integer such that $(\tau(k)-1)q \le k \le \tau(k) q - 1$. We also define $\Gamma_0 = 0, \Gamma_1 = 1$ and $\Gamma_k = (1-\alpha_k)\Gamma_{k-1}$ for $k=2,3,...$. Since we set $\alpha_k = \frac{2}{k+1}$, it is easy to check that $\Gamma_k = \frac{2}{k(k+1)}$. Note that this convergence analysis is mainly following \cite{zhou2019momentum}. Besides the auxiliary Algorithm \ref{lemma: Zhang} (\cite{fang2018spider}, lemma1), we prove the following auxiliary lemma.

\begin{lemma}\label{aux: 3}
	Let the sequences $\{x_k\}_k, \{y_k\}_k, \{z_k\}_k$ be generated by Algorithm \ref{SPIDER-SQN-M}. Then, the following inequalities hold
	\begin{align}
	y_k - x_k &= \Gamma_k\sum_{t=1}^k \frac{\lambda_{t-1} - \beta_{t-1}}{\Gamma_t} H_{t-1}v_{t-1},\\
	\|y_k - x_k\|^2 &\le \sigma_{\mathrm{max}}^2\Gamma_k\sum_{t=1}^k \frac{\lambda_{t-1} - \beta_{t-1}}{\alpha_t\Gamma_t} \|v_{t-1}\|^2,\\
	\|z_{k+1}-z_k\|^2 &\le 2\beta_{k}^2\sigma_{\mathrm{max}}^2\| H_kv_{k}\|^2 + 2\alpha_{k+2}^2\sigma_{\mathrm{max}}^2 \Gamma_{k+1}\sum_{t=1}^{k+1} \frac{(\lambda_{t-1} - \beta_{t-1})^2}{\alpha_t\Gamma_t} \|v_{t-1}\|^2.
	\end{align}
\end{lemma}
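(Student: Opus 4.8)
The three claims are statements about the auxiliary sequences of Algorithm~\ref{SPIDER-SQN-M}, so the plan is to extract a single one-step recursion for the gap $y_k-x_k$ and then unroll it. Writing $d_k=H_kv_k$ and substituting the two updates $x_{k+1}=x_k-\lambda_kd_k$ and $y_{k+1}=z_k-\beta_kd_k$ into $z_k=(1-\alpha_{k+1})y_k+\alpha_{k+1}x_k$, I would first verify the recursion
\begin{align}
y_{k+1}-x_{k+1}=(1-\alpha_{k+1})(y_k-x_k)+(\lambda_k-\beta_k)d_k, \nonumber
\end{align}
which holds with the initialization $y_0=x_0$. Since $\Gamma_{k+1}=(1-\alpha_{k+1})\Gamma_k$ by definition, the first identity then follows by a one-line induction: multiplying the inductive hypothesis by $(1-\alpha_{k+1})$ turns the prefactor $\Gamma_k$ into $\Gamma_{k+1}$, and the new term $(\lambda_k-\beta_k)d_k$ is exactly the $t=k+1$ summand $\Gamma_{k+1}\frac{\lambda_k-\beta_k}{\Gamma_{k+1}}d_k$.

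For the second inequality the crux is the telescoping identity $\Gamma_k\sum_{t=1}^k\frac{\alpha_t}{\Gamma_t}=1$, which I would verify either from $\frac{\alpha_t}{\Gamma_t}=\frac{1}{\Gamma_t}-\frac{1}{\Gamma_{t-1}}$ (handling $t=1$ separately) or directly from the closed form $\Gamma_t=\frac{2}{t(t+1)}$, which gives $\frac{\alpha_t}{\Gamma_t}=t$ and hence $\sum_{t=1}^k t=\frac{1}{\Gamma_k}$. This lets me recast the first identity as a genuine convex combination $y_k-x_k=\sum_{t=1}^k\mu_t\big(\frac{\lambda_{t-1}-\beta_{t-1}}{\alpha_t}H_{t-1}v_{t-1}\big)$ with weights $\mu_t=\Gamma_k\alpha_t/\Gamma_t\ge 0$ summing to one. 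Applying Jensen's inequality to the convex map $\|\cdot\|^2$ and then $\|H_{t-1}v_{t-1}\|\le\sigma_{\mathrm{max}}\|v_{t-1}\|$ (Assumption~\ref{assum3}) yields $\|y_k-x_k\|^2\le\sigma_{\mathrm{max}}^2\Gamma_k\sum_{t=1}^k\frac{(\lambda_{t-1}-\beta_{t-1})^2}{\alpha_t\Gamma_t}\|v_{t-1}\|^2$; the stated inequality, which is linear in $\lambda_{t-1}-\beta_{t-1}$, then follows because the stepsize choice forces $\lambda_{t-1}-\beta_{t-1}\le\alpha_{t-1}\beta\le 1$, so that $(\lambda_{t-1}-\beta_{t-1})^2\le\lambda_{t-1}-\beta_{t-1}$.

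For the third inequality I would first expand $z_{k+1}-z_k$ using $x_k-z_k=-(1-\alpha_{k+1})(y_k-x_k)$ together with the two updates, which gives
\begin{align}
z_{k+1}-z_k=-\alpha_{k+2}(1-\alpha_{k+1})(y_k-x_k)-\big[(1-\alpha_{k+2})\beta_k+\alpha_{k+2}\lambda_k\big]d_k. \nonumber
\end{align}
Splitting via $\|a+b\|^2\le 2\|a\|^2+2\|b\|^2$ isolates a \emph{momentum} term in $\|y_k-x_k\|^2$ and a \emph{descent} term in $\|d_k\|^2=\|H_kv_k\|^2$. For the momentum term I would insert the squared bound on $\|y_k-x_k\|^2$ produced above and absorb the prefactor using $(1-\alpha_{k+1})^2\Gamma_k\le(1-\alpha_{k+1})\Gamma_k=\Gamma_{k+1}$, then extend the sum from $k$ to $k+1$ (only adding a nonnegative term); this reproduces the second term of the claim exactly. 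For the descent term I would bound the scalar $(1-\alpha_{k+2})\beta_k+\alpha_{k+2}\lambda_k\le(1+\alpha_{k+2}\alpha_k)\beta_k$ by a constant multiple of $\beta_k$, which the factor $\sigma_{\mathrm{max}}^2$ (taken, as is standard, to be at least one) absorbs into $2\beta_k^2\sigma_{\mathrm{max}}^2\|H_kv_k\|^2$.

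I expect the bookkeeping with the $\Gamma_k$ weights and the index shifts ($\alpha_{k+1}$ versus $\alpha_{k+2}$, and sums running to $k$ versus $k+1$) to be the most error-prone part, together with getting the telescoping identity $\Gamma_k\sum_{t}\alpha_t/\Gamma_t=1$ right; everything after that is the by-now routine ``Jensen plus $\|H_k\|\le\sigma_{\mathrm{max}}$'' mechanics. The only genuinely delicate points are the reduction from the squared to the linear power in the second claim and the constant slack hidden in $\sigma_{\mathrm{max}}^2$ in the descent term of the third claim, both of which lean on the specific stepsize regime in which $\beta_k\equiv\beta$ is small.
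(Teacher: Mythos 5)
Your treatment of the first two claims matches the paper's proof essentially step for step: the one-step recursion $y_{k+1}-x_{k+1}=(1-\alpha_{k+1})(y_k-x_k)+(\lambda_k-\beta_k)H_kv_k$, division by $\Gamma_{k+1}$ and telescoping for the identity, then the reweighting by $\alpha_t/\Gamma_t$, the fact $\sum_{t=1}^k\alpha_t/\Gamma_t=1/\Gamma_k$, Jensen's inequality, and $\|H_{t-1}\|\le\sigma_{\mathrm{max}}$ for the second claim. You are also right that this argument produces the \emph{squared} power $(\lambda_{t-1}-\beta_{t-1})^2$: the paper's own proof derives exactly that and never bridges back to the linear power appearing in the lemma statement (evidently a typo, since the squared form is what is used downstream, e.g.\ in Eqs.~(\ref{eq: 17})--(\ref{eq: 19})). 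Your patch $(\lambda_{t-1}-\beta_{t-1})^2\le\lambda_{t-1}-\beta_{t-1}$ is therefore unnecessary, and in any case it leans on $\lambda_{t-1}-\beta_{t-1}\le1$, i.e.\ on $\beta$ being small, which is not among the lemma's hypotheses.

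The genuine gap is in the third claim. Your expansion $z_{k+1}-z_k=-\alpha_{k+2}(1-\alpha_{k+1})(y_k-x_k)-\left[(1-\alpha_{k+2})\beta_k+\alpha_{k+2}\lambda_k\right]H_kv_k$ is algebraically correct, but after the $\|a+b\|^2\le2\|a\|^2+2\|b\|^2$ split, the descent coefficient is $\left[(1-\alpha_{k+2})\beta_k+\alpha_{k+2}\lambda_k\right]^2\le(1+\alpha_{k+2}\alpha_k)^2\beta_k^2$, and absorbing this into the claimed $2\beta_k^2\sigma_{\mathrm{max}}^2\|H_kv_k\|^2$ requires $\sigma_{\mathrm{max}}\ge1+\alpha_{k+2}\alpha_k$. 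Assumption \ref{assum3} gives no lower bound on $\sigma_{\mathrm{max}}$ at all (it need not even exceed $1$), and at $k=0$ one has $\alpha_0=2$, $\alpha_2=2/3$, so you would need $\sigma_{\mathrm{max}}\ge7/3$; your remark that $\sigma_{\mathrm{max}}\ge 1$ ``as is standard'' would not suffice even if it were assumed. The paper avoids this entirely by grouping differently: it writes $z_{k+1}-z_k=(y_{k+1}-z_k)+\alpha_{k+2}(x_{k+1}-y_{k+1})$, notes $y_{k+1}-z_k=-\beta_kH_kv_k$ exactly, and bounds $\|x_{k+1}-y_{k+1}\|^2=\|y_{k+1}-x_{k+1}\|^2$ by the second claim applied at index $k+1$. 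In other words, the piece $(\lambda_k-\beta_k)H_kv_k$ that you pushed into the descent term should stay with the momentum term, where it is precisely the $t=k+1$ summand; then the descent coefficient is exactly $2\beta_k^2$ and no constant needs to be hidden in $\sigma_{\mathrm{max}}^2$. (The residual factor $\sigma_{\mathrm{max}}^2$ attached to $\|H_kv_k\|^2$ in the stated bound is redundant slack of the paper's own statement, not something your argument should rely on.) With that regrouping, the rest of your argument goes through and in fact simplifies: the absorption $(1-\alpha_{k+1})^2\Gamma_k\le\Gamma_{k+1}$ and the extension of the sum from $k$ to $k+1$ are no longer needed.
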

\begin{proof}
	We prove the first equality. By the update rule of the momentum scheme, we obtain that
	\begin{align}
	y_k - x_k &= z_{k-1} - \beta_{k-1} H_{k-1}v_{k-1} - (x_{k-1} - \lambda_{k-1} H_{k-1}v_{k-1}) \nonumber\\
	&= (1-\alpha_k) (y_{k-1} - x_{k-1}) + (\lambda_{k-1} - \beta_{k-1}) H_{k-1}v_{k-1}.
	\end{align}
	Dividing both sides by $\Gamma_k$ and noting that $\frac{1-\alpha_k}{\Gamma_k} = \Gamma_{k-1}$, we further obtain that
	\begin{align}
	\frac{y_k - x_k}{\Gamma_k} &= \frac{y_{k-1} - x_{k-1}}{\Gamma_{k-1}} + \frac{\lambda_{k-1} - \beta_{k-1}}{\Gamma_k} H_{k-1}v_{k-1}.
	\end{align}
	Telescoping the above equality over $k$ yields the first desired equality.
	
	Next, we prove the second inequality. Based on the first equality, we obtain that
	\begin{align}
	\|y_k - x_k \|^2 &= \|\Gamma_{k} \sum_{t=1}^{k} \frac{\lambda_{t-1} - \beta_{t-1}}{\Gamma_t} H_{t-1}v_{t-1}\|^2 \nonumber \\
	&= \|\Gamma_{k} \sum_{t=1}^{k} \frac{\alpha_t}{\Gamma_t} \frac{\lambda_{t-1} - \beta_{t-1}}{\alpha_t} H_{t-1}v_{t-1}\|^2 \nonumber \\
	&\overset{(i)}{\le} \Gamma_{k} \sum_{t=1}^{k} \frac{\alpha_t}{\Gamma_t} \frac{(\lambda_{t-1} - \beta_{t-1})^2}{\alpha_t^2}  \|H_{t-1}v_{t-1}\|^2 \nonumber \\
	&= \Gamma_{k} \sum_{t=1}^{k} \frac{(\lambda_{t-1} - \beta_{t-1})^2}{\Gamma_t\alpha_t}  \|H_{t-1}v_{t-1}\|^2 \\
&\overset{(ii)} {\le} \sigma_{\mathrm{max}}^2 \Gamma_{k} \sum_{t=1}^{k} \frac{(\lambda_{t-1} - \beta_{t-1})^2}{\Gamma_t\alpha_t}  \|v_{t-1}\|^2,
	\end{align}
	where (i) uses the facts that $\{\Gamma_k\}_k$ is a decreasing sequence, $\sum_{t=1}^{k} \frac{\alpha_t}{\Gamma_t} = \frac{1}{\Gamma_k}$ and Jensen's inequality, (ii) follows from the Algorithm \ref{assum3}.
	
	Finally, we prove the third inequality. By the update rule of the momentum scheme, we obtain that $z_{k+1} - z_{k} = y_{k+1} - z_k + \alpha_{k+2} (x_{k+1} - y_{k+1})$. Then, we further obtain that
	\begin{align}
	\|z_{k+1} - z_{k}\| &\le \|y_{k+1} - z_k\| + \alpha_{k+2} \|x_{k+1} - y_{k+1}\| \nonumber \\
	&\le \beta_{k} \|H_{k}v_{k}\| + \alpha_{k+2} \sqrt{\|x_{k+1} - y_{k+1}\|^2} \nonumber \\
	&\le \beta_{k} \|H_{k}v_{k}\| + \alpha_{k+2} \sqrt{\Gamma_{k+1} \sum_{t=1}^{k+1} \frac{(\lambda_{t-1} - \beta_{t-1})^2}{\Gamma_t\alpha_t}  \|H_{t-1}v_{t-1}\|^2} \nonumber \\
	\end{align}
	The desired result follows by taking the square on both sides of the above inequality and using the facts that $(a+b)^2 \le 2a^2+2b^2$ and $\|H_k\|$ is upper bounded by $\sigma_{\mathrm{max}}$.
\end{proof}

\subsection{Proof of Algorithm \ref{thm: ssqnm}}

Consider any iteration $k$ of the algorithm. By smoothness of $f$, we obtain that
\begin{align}
f(x_k) &\le f(x_{k-1}) + \inner{\nabla f(x_{k-1})}{x_k - x_{k-1}} + \frac{L}{2}\|x_k - x_{k-1}\|^2 \nonumber \\
&= f(x_{k-1}) + \inner{\nabla f(x_{k-1})}{- \lambda_{k-1} H_{k-1}v_{k-1}} + \frac{L\lambda_{k-1}^2}{2}\|H_{k-1}v_{k-1}\|^2 \nonumber \\
&= f(x_{k-1}) - \lambda_{k-1} \inner{\nabla f(x_{k-1})-v_{k-1}}{H_{k-1}v_{k-1}} -  \lambda_{k-1} \inner{v_{k-1}}{H_{k-1}v_{k-1}} + \frac{L\lambda_{k-1}^2}{2}\|H_{k-1}v_{k-1}\|^2 \nonumber\\
&\overset{(i)}\le f(x_{k-1}) - \lambda_{k-1} \inner{\nabla f(x_{k-1})-v_{k-1}}{H_{k-1}v_{k-1}} -  \lambda_{k-1} \|v_{k-1}\|\|H_{k-1}v_{k-1}\| +\frac{L\lambda_{k-1}^2}{2}\|H_{k-1}v_{k-1}\|^2,
\end{align}
where (i) follows from Cauchy-Swartz inequality. Rearranging the above inequality and using Cauchy-Swartz inequality yields that
\begin{align}
	f(x_k) \le  f(x_{k-1}) - \lambda_{k-1}(\|H_{k-1}\|-\frac{L\lambda_{k-1} \|H_{k-1}\|^2}{2}) \|v_{k-1}\|^2 + \lambda_{k-1} \|H_{k-1}\| \|\nabla f(x_{k-1})-v_{k-1}\| \|v_{k-1}\|. \label{eq: 15}
\end{align}

Note that
\begin{align}
\|\nabla f(x_{k-1}) - v_{k-1}\| &\le \|\nabla f(x_{k-1}) - \nabla f(z_{k-1})\| + \|\nabla f(z_{k-1}) - v_{k-1}\| \nonumber \\
&\overset{(i)}{\le} L\|x_{k-1} - z_{k-1}\| + \|\nabla f(z_{k-1}) - v_{k-1}\| \nonumber \\
&\overset{(ii)}{\le} L(1-\alpha_k) \|y_{k-1} - x_{k-1}\| + \|\nabla f(z_{k-1}) - v_{k-1}\| ,
\end{align}
where (i) uses the Lipschitz continuity of $\nabla f$ and (ii) follows from the update rule of the momentum scheme. Substituting the above inequality into Eq.~(\ref{eq: 15}) yields that
\begin{align}
f(x_k) &\le f(x_{k-1}) - \lambda_{k-1}(\|H_{k-1}\|-\frac{L\lambda_{k-1} \|H_{k-1}\|^2}{2}) \|v_{k-1}\|^2
+ L\lambda_{k-1}(1-\alpha_k) \|H_{k-1}\|\|v_{k-1}\|\|y_{k-1} - x_{k-1}\| \nonumber\\
&\quad+ \lambda_{k-1}\|H_{k-1}\| \|v_{k-1}\|\|\nabla f(z_{k-1}) - v_{k-1}\|
\nonumber \\
&\le f(x_{k-1}) - \lambda_{k-1}(\|H_{k-1}\|-\frac{L\lambda_{k-1} \|H_{k-1}\|^2}{2}) \|v_{k-1}\|^2 + \frac{L\lambda_{k-1}^2\|H_{k-1}\|^2}{2} \|v_{k-1}\|^2 + \frac{L(1-\alpha_k)^2}{2}\|y_{k-1} - x_{k-1}\|^2 \nonumber\\
&\qquad+ \frac{\lambda_{k-1}\|H_{k-1}\|}{2} \|v_{k-1}\|^2 + \frac{\lambda_{k-1}\|H_{k-1}\|}{2}\|\nabla f(z_{k-1}) - v_{k-1}\|^2
\nonumber \\
&\overset{(i)}\leq f(x_{k-1}) - \lambda_{k-1}(\frac{\sigma_{\mathrm{min}}}{2}-\frac{2L\lambda_{k-1} \sigma_{\mathrm{max}}^2}{2}) \|v_{k-1}\|^2 + \frac{L(1-\alpha_k)^2}{2}\|y_{k-1} - x_{k-1}\|^2 \nonumber \\
 &\quad+ \frac{\lambda_{k-1}\sigma_{\mathrm{max}}}{2}\|\nabla f(z_{k-1}) - v_{k-1}\|^2
 \nonumber\\
&\overset{(ii)} \le  f(x_{k-1}) - \lambda_{k-1}(\frac{\sigma_{\mathrm{min}}}{2}-\frac{2L\lambda_{k-1} \sigma_{\mathrm{max}}^2}{2}) \|v_{k-1}\|^2 + \frac{L\Gamma_{k-1}}{2}\sum_{t=1}^{k-1} \frac{\lambda_{t-1} - \beta_{t-1}}{\alpha_t\Gamma_t} \sigma_{\mathrm{max}}^2\| v_{t-1}\|^2 \nonumber\\
&\quad+ \frac{\lambda_{k-1}\sigma_{\mathrm{max}}}{2}\|\nabla f(z_{k-1}) - v_{k-1}\|^2,
\end{align}
where (i) follows from and the Assumption \ref{assum3}, (ii) uses item 2 of Algorithm \ref{aux: 3} and the fact that $0<\alpha_k <1$.
Telescoping the above inequality over $k$ from $1$ to $K$ yields that
\begin{align}
f(x_{K}) &\le f(x_{0}) - \sum_{k=0}^{K-1} \lambda_{k}(\frac{\sigma_{\mathrm{min}}}{2}-\frac{2L\lambda_{k} \sigma_{\mathrm{max}}^2}{2}) \|v_{k}\|^2 + \sum_{k=0}^{K-1} \frac{L\Gamma_{k}}{2}\sum_{t=1}^{k-1} \frac{\lambda_{t-1} - \beta_{t-1}}{\alpha_t\Gamma_t} \sigma_{\mathrm{max}}^2\| v_{t-1}\|^2 \nonumber \\
&\quad + \sum_{k=0}^{K-1} \frac{\lambda_k\sigma_{\mathrm{max}}}{2}\|\nabla f(z_{k}) - v_k\|^2
\nonumber \\
&= f(x_{0}) - \sum_{k=0}^{K-1} \lambda_{k}(\frac{\sigma_{\mathrm{min}}}{2}-\frac{2L\lambda_{k} \sigma_{\mathrm{max}}^2}{2}) \|v_{k}\|^2 + \frac{L \sigma_{\mathrm{max}}^2}{2} \sum_{k=0}^{K-1} \sum_{t=1}^{k-1} \frac{\lambda_{t-1} - \beta_{t-1}}{\alpha_t\Gamma_t} \| v_{t-1}\|^2(\sum_{t=k}^{K-1} \Gamma_{t})  \nonumber\\
&\quad+ \sum_{k=0}^{K-1} \frac{\lambda_k\sigma_{\mathrm{max}}}{2}\|\nabla f(z_{k}) - v_k\|^2, \label{eq15}
\end{align}
where we have exchanged the order of summation in the second equality. Furthermore, note that $\sum_{t=k}^{K-1} \Gamma_{t} = 2\sum_{t=k}^{K-1} \frac{1}{t} - \frac{1}{t+1} \le \frac{2}{k}$. Then, substituting this bound into the above inequality and taking expectation on both sides yield that
\begin{align}
\mathbb{E}[f(x_{K})] &\le f(x_{0}) - \sum_{k=0}^{K-1} \lambda_{k}(\frac{\sigma_{\mathrm{min}}}{2}-\frac{2L\lambda_{k} \sigma_{\mathrm{max}}^2}{2}) \mathbb{E}\| v_{k}\|^2 + \frac{L \sigma_{\mathrm{max}}^2}{2} \sum_{k=0}^{K-1} \frac{2(\lambda_k - \beta_k)^2}{k\Gamma_{k+1}\alpha_{k+1}} \mathbb{E}\| v_{k}\|^2  \nonumber\\
&\quad+ \sum_{k=0}^{K-1} \frac{\lambda_k\sigma_{\mathrm{max}}}{2} \mathbb{E}\|\nabla f(z_{k}) - v_k\|^2. \label{eq: 16}
\end{align}

Next, we bound the term $\mathbb{E}\|\nabla f(z_{k}) - v_k\|^2$ in the above inequality. By Algorithm \ref{aux: 3} we obtain that
\begin{align}
\mathbb{E}\|\nabla f(z_{k}) - v_k\|^2 &\le \sum_{i=(\tau(k)-1)q}^{k-1}\frac{L^2}{|\xi_i|} \mathbb{E}\|z_{i+1} - z_{i}\|^2 \nonumber \\
&\le \sum_{i=(\tau(k)-1)q}^{k-1}\frac{L^2 \sigma_{\mathrm{max}}^2}{|\xi_i|} \big[2\beta_{i}^2\| v_{i}\|^2 + 2\alpha_{i+2}^2 \Gamma_{i+1}\sum_{t=0}^{i} \frac{(\lambda_{t} - \beta_{t})^2}{\alpha_t\Gamma_t} \|v_{t}\|^2], \label{eq: 17}
\end{align}
where the last inequality uses item 3 of Algorithm \ref{aux: 3}. Substituting Eq.~(\ref{eq: 17}) into Eq.~(\ref{eq: 16}) and simplifying yield that
\begin{align}
\mathbb{E}[f(x_{K})] &\le f(x_{0}) - \sum_{k=0}^{K-1} \Big[\lambda_{k}(\frac{\sigma_{\mathrm{min}}}{2}-\frac{2L\lambda_{k} \sigma_{\mathrm{max}}^2}{2})  - \frac{L \sigma_{\mathrm{max}}^2(\lambda_k - \beta_k)^2}{k\Gamma_{k+1}\alpha_{k+1}}\Big] \mathbb{E}\|v_{k}\|^2 \nonumber\\
&\quad + \underbrace{\sum_{k=0}^{K-1} \frac{\lambda_k \sigma_{\mathrm{max}}^3}{2} \mathbb{E}\bigg[\sum_{i=(\tau(k)-1)q}^{k-1}\frac{L^2}{|\xi_i|} \bigg[2\beta_i^2\| v_{i}\|^2 + 2\alpha_{i+2}^2 \Gamma_{i+1}\sum_{t=0}^i \frac{(\lambda_t - \beta_t)^2}{\alpha_{t+1}\Gamma_{t+1}} \|v_{t}\|^2 \bigg] \bigg]}_{T}. \label{eq: 18}
\end{align}
Before we proceed the proof, we first specify the choices of all the parameters. Specifically, we choose a constant mini-batch size $|\xi_k|\equiv |\xi|$, a constant $q=|\xi|$, a constant $\beta_k \equiv \beta>0$, $\lambda_k \in [\beta, (1+\alpha_{k+1})\beta]$. Based on these parameter settings, the term $T$ in the above inequality can be bounded as follows.
\begin{align}
T &\overset{(i)}{\le} \sum_{k=0}^{K-1} \frac{\lambda_k\sigma_{\mathrm{max}}^3}{2} \mathbb{E}\bigg[\sum_{i=(\tau(k)-1)q}^{\tau(k)q-1}\frac{L^2}{|\xi_i|} \bigg[2\beta_i^2\|v_{i}\|^2 + 2\alpha_{i+2}^2 \Gamma_{i+1}\sum_{t=0}^{k-1} \frac{(\lambda_t - \beta_t)^2}{\alpha_{t+1}\Gamma_{t+1}} \| v_{t}\|^2 \bigg] \bigg]
\nonumber\\
&\overset{(ii)}{\le} \sum_{k=0}^{K-1} \frac{\lambda_k L^2q\beta^2\sigma_{\mathrm{max}}^3}{|\xi|}\mathbb{E}\|v_{k}\|^2 + \sum_{k=0}^{K-1} \frac{2\lambda_kL^2\sigma_{\mathrm{max}}^3}{|\xi|[(\tau(k)-1)q+1]^3} \sum_{t=0}^{k-1}\frac{(\lambda_t - \beta_t)^2}{\alpha_{t+1}\Gamma_{t+1}}\mathbb{E}\|v_{t}\|^2
\nonumber\\
&\overset{(iii)}{\le} \sum_{k=0}^{K-1} \lambda_k L^2\beta^2\sigma_{\mathrm{max}}^3 \mathbb{E}\| v_{k}\|^2 + \frac{2L^2\beta^2\sigma_{\mathrm{max}}^3 }{|\xi|} \sum_{k=0}^{K-1}  \frac{\alpha_{k+1}}{\Gamma_{k+1}} \mathbb{E}\| v_{k}\|^2 (\sum_{t=k}^{K-1} \frac{\lambda_k}{[(\tau(t)-1)q+1]^3})
\nonumber\\
&\overset{(iv)}{\le} \sum_{k=0}^{K-1} \lambda_k L^2\beta^2\sigma_{\mathrm{max}}^3 \mathbb{E}\| v_{k}\|^2 + \frac{4L^2\beta^3\sigma_{\mathrm{max}}^3}{|\xi|} \sum_{k=0}^{K-1} (k+1) \mathbb{E}\| v_{k}\|^2 (\sum_{t=(\tau(k)-1)q}^{\tau(K)q} \frac{1}{[(\tau(t)-1)q+1]^3})
\nonumber\\
&= \sum_{k=0}^{K-1} \lambda_k L^2\beta^2\sigma_{\mathrm{max}}^3 \mathbb{E}\|G v_{k}\|^2 + \frac{4L^2\beta^3\sigma_{\mathrm{max}}^3}{|\xi|} \sum_{k=0}^{K-1} (k+1) \mathbb{E}\| v_{k}\|^2 (\sum_{t=\tau(k)-1}^{\tau(K)} \frac{q}{(tq+1)^3})
\nonumber\\
&\le \sum_{k=0}^{K-1} \lambda_k L^2\beta^2\sigma_{\mathrm{max}}^3 \mathbb{E}\|v_{k}\|^2 + \frac{2L^2\beta^3\sigma_{\mathrm{max}}^3}{q} \sum_{k=0}^{K-1} (k+1) \mathbb{E}\| v_{k}\|^2 \frac{1}{[(\tau(k)-1)q+1]^2}
\nonumber \\
&\overset{(v)}{\le} \sum_{k=0}^{K-1} \lambda_k L^2\beta^2 \sigma_{\mathrm{max}}^3 \mathbb{E}\| v_{k}\|^2 + 2L^2\beta^3\sigma_{\mathrm{max}}^3 \sum_{k=0}^{K-1} \mathbb{E}\|v_{k}\|^2 \frac{\tau(k)}{[(\tau(k)-1)q+1]^2} \nonumber\\
&\le \sum_{k=0}^{K-1} \lambda_k L^2\beta^2\sigma_{\mathrm{max}}^3 \mathbb{E}\|v_{k}\|^2 + 2L^2\beta^3\sigma_{\mathrm{max}}^3 \sum_{k=0}^{K-1} \mathbb{E}\| v_{k}\|^2, \label{eq: 19}
\end{align}
where (i) follows from the facts that $i\le k-1$ and $k-1\le \tau(k)q-1$, (ii) uses the fact that $\sum_{i=(\tau(k)-1)q}^{\tau(k)q-1} \alpha_{i+2}^2\Gamma_{i+1} \le \frac{2}{(\tau(k)-1)q+1)^3}$, (iii) uses the parameter settings $q=|\xi|$ and $\lambda_t-\beta_t \le \alpha_t\beta$, (iv) uses the facts that $\lambda_k\le 2\beta$ and $(\tau(k)-1)q \le k\le \tau(k)q$ and (v) uses the fact that $k \le \tau(k)q-1$. Substituting the above inequality into Eq.~(\ref{eq: 18}) and simplifying, we obtain that
\begin{align}
\mathbb{E}[f(x_{K})] &\le f(x_{0}) - \sum_{k=0}^{K-1} \Big[\lambda_k (\frac{\sigma_{\mathrm{min}}}{2}-\frac{2L\lambda_{k} \sigma_{\mathrm{max}}^2}{2} - L^2\beta^2\sigma_{\mathrm{max}}^3) - \frac{L(\lambda_k - \beta_k)^2\sigma_{\mathrm{max}}^2}{k\Gamma_{k+1}\alpha_{k+1}} -2L^2\beta^3 \sigma_{\mathrm{max}}^3\Big] \mathbb{E}\| v_{k}\|^2 \label{eq: 31} \\
&\le f(x_{0}) - \sum_{k=0}^{K-1} \Big[\beta (\frac{\sigma_{\mathrm{min}}}{2}-{2L\beta  \sigma_{\mathrm{max}}^2} - L^2\beta^2\sigma_{\mathrm{max}}^3) - L\beta^2\sigma_{\mathrm{max}}^2 -2L^2\beta^3\sigma_{\mathrm{max}}^3 \Big] \mathbb{E}\|v_{k}\|^2
\nonumber \\
&= f(x_{0}) - \sum_{k=0}^{K-1} \Big[\beta (\frac{\sigma_{\mathrm{min}}}{2}-{3L\beta  \sigma_{\mathrm{max}}^2} - 3L^2\beta^2\sigma_{\mathrm{max}}^3) \Big] \mathbb{E}\|v_{k}\|^2.
 \label{eq: 32}
\end{align}
Let $\beta^* = \beta (\frac{\sigma_{\mathrm{min}}}{2}-3L\beta\sigma_{\mathrm{max}}^2 - 3L^2\beta^2\sigma_{\mathrm{max}}^3)$. Following the analysis of Eq.~(\ref{beta1}), we choose $\beta = \frac{c}{L(3+\sqrt{15})\sigma_{\mathrm{max}}}$, where $c={\sigma_{\mathrm{min}} }/{\sigma_{\mathrm{max}}}< 1$ and then there is
\begin{align}
  \beta^* &= \frac{3c^2}{Lm^3}(1-c)\overset{(i)}> 0
\end{align}
where $m = 3+\sqrt{15}$ and (i) follows the definition of $\beta$. the above inequality further implies that
\begin{align}
\mathbb{E} [f(x_{K})] &\le f(x_{0}) - \sum_{k=0}^{K-1} \beta^*\mathbb{E}\| v_{k}\|^2 . \label{eq: 27}
\end{align}
Then, it follows that $\frac{1}{K}\sum_{k=0}^{K-1}\mathbb{E}\|v_{k}\|^2 \le (f(x_0) - f^*)/(K\beta^*)$. Next, we bound the term $\mathbb{E}  \| \nabla f(z_{\zeta})\|^2$, where $\zeta$ is selected uniformly at random from $\{0,\ldots,K-1\}$. Observe that
\begin{align}
\mathbb{E}  \| \nabla f(z_{\zeta})\|^2  = \mathbb{E}  \| \nabla f(z_{\zeta})-v_{\zeta} + v_{\zeta}\|^2  \overset{(i)}{\le} 2\mathbb{E}  \|\nabla f(z_{\zeta})-v_{\zeta}\|^2 + 2\mathbb{E}  \| v_{\zeta}\|^2,  \label{eq: 29}
\end{align}
where (i) uses the fact $(a+b)^2 \le 2a^2 +2b^2$. Next, we bound the two terms on the right hand side of the above inequality separately. First, note that
\begin{align}
  \mathbb{E} \|v_\zeta\|^2  = \frac{1}{K }\sum_{k=0}^{K-1} \mathbb{E} \|
v_k\|^2 \le \frac{(f(x_0) - f^*)}{K\beta^*}.
\end{align}

Second, note that Eq.~(\ref{eq: 17}) implies that
\begin{align}
\mathbb{E}  \|\nabla f(z_\zeta)-v_\zeta\|^2  &\le \mathbb{E}\sum_{i=(\tau(\zeta)-1)q}^{\zeta-1}\frac{L^2\sigma_{\mathrm{max}}^2}{|\xi_i|} \big[2\beta_i^2\|v_i\|^2 + 2\alpha_{i+2}^2 \Gamma_{i+1}\sum_{t=0}^{i} \frac{(\lambda_{t} - \beta_{t})^2}{\alpha_{t+1}\Gamma_{t+1}} \| v_t\|^2 \big] \nonumber\\
&\le \frac{2L^2\beta^2\sigma_{\mathrm{max}}^2}{|\xi|} \mathbb{E} \bigg(\sum_{i=(\tau(\zeta)-1)q}^{\tau(\zeta)q-1} \|
 v_i\|^2\bigg) + \frac{L^2\sigma_{\mathrm{max}}^2}{|\xi|} \mathbb{E} \bigg(\sum_{i=(\tau(\zeta)-1)q}^{\zeta-1} 2\alpha_{i+2}^2 \Gamma_{i+1}\sum_{t=0}^{i} \frac{(\lambda_{t} - \beta_{t})^2}{\alpha_{t+1}\Gamma_{t+1}} \| v_t\|^2 \bigg) \nonumber\\
&\le \frac{2L^2\beta^2\sigma_{\mathrm{max}}^2}{|\xi|} \frac{1}{K} \sum_{\zeta=0}^{K-1} \bigg(\sum_{i=(\tau(\zeta)-1)q}^{\tau(\zeta)q-1} \mathbb{E}\| v_i\|^2\bigg) \nonumber\\
&\quad+ \frac{L^2 \beta^2\sigma_{\mathrm{max}}^2}{|\xi|} \frac{1}{K} \sum_{\zeta=0}^{K-1} \bigg(\sum_{i=(\tau(\zeta)-1)q}^{\tau(\zeta)q-1} 2\alpha_{i+2}^2 \Gamma_{i+1}\sum_{t=0}^{\zeta-1} (t+1) \mathbb{E}\|v_t\|^2 \bigg) \nonumber\\
&\le \frac{2L^2\beta^2\sigma_{\mathrm{max}}^2 q}{|\xi|} \frac{1}{K} \sum_{\zeta=0}^{K-1} \mathbb{E}\|v_\zeta\|^2 + \frac{L^2 \beta^2\sigma_{\mathrm{max}}^2}{|\xi|} \frac{1}{K} \sum_{\zeta=0}^{K-1} \bigg(\frac{4}{[(\tau(\zeta)-1)q+1]^3}\sum_{t=0}^{\zeta-1} (t+1) \mathbb{E}\|v_t\|^2 \bigg) \nonumber\\
&\le 2L^2\beta^2\sigma_{\mathrm{max}}^2 \bigg(\frac{1}{K} \sum_{\zeta=0}^{K-1} \mathbb{E}\|v_\zeta\|^2\bigg) + \frac{L^2 \beta^2\sigma_{\mathrm{max}}^2}{|\xi|} \frac{1}{K} \sum_{\zeta=0}^{K-1} (\zeta+1) \mathbb{E}\|v_\zeta\|^2 \sum_{t=\zeta}^{K-1} \frac{4}{[(\tau(t)-1)q+1]^3} \nonumber\\
&\le 2L^2\beta^2\sigma_{\mathrm{max}}^2 \bigg(\frac{1}{K} \sum_{\zeta=0}^{K-1} \mathbb{E}\|v_\zeta\|^2\bigg) + L^2 \beta^2\sigma_{\mathrm{max}}^2 \frac{1}{K} \sum_{\zeta=0}^{K-1} \mathbb{E}\|v_\zeta\|^2 \frac{2\tau(\zeta)}{[(\tau(\zeta)-1)q+1]^2} \nonumber\\
&\le 3L^2\beta^2\sigma_{\mathrm{max}}^2 \frac{(f(x_0) - f^*)}{K\beta^*}, \label{eq: proof4}
\end{align}
where we have used the fact that $\zeta$ is sampled uniformly from $0,...,K-1$ at random.

Combining the above three inequalities we have
\begin{align}
\mathbb{E}  \| \nabla f(z_{\zeta})\|^2  &= \mathbb{E}  \| \nabla f(z_{\zeta})-v_{\zeta} + v_{\zeta}\|^2 \nonumber\\
&\overset{(i)}{\le} 2\mathbb{E}  \|\nabla f(z_{\zeta})-v_{\zeta}\|^2 + 2\mathbb{E}  \| v_{\zeta}\|^2 \nonumber\\
&\le \frac{(6L^2\beta^2\sigma_{\mathrm{max}}^2 + 2)}{K\beta^*} {(f(x_0) - f^*)}. \nonumber
\\ \label{eq: final2}
\end{align}
To ensure $\mathbb{E}  \| \nabla f(z_{\zeta})\| \le \epsilon $, it is sufficient to ensure $\mathbb{E}  \| \nabla f(z_{\zeta})\|^2 \le \epsilon^2$ ( since  $(\mathbb{E}  \| \nabla f(z_{\zeta})\|)^2 \le \mathbb{E}  \| \nabla f(z_{\zeta})\|^2 $, due to Jensen's inequality.) Therefore, we need the total number $K$ of iterations satisfies that and note that $\frac{(6L^2\beta^2\sigma_{\mathrm{max}}^2 + 2)}{K\beta^*} {(f(x_0) - f^*)} \le \epsilon^2$, which gives
\begin{align}
K= \frac{(6L^2\beta^2\sigma_{\mathrm{max}}^2 + 2)}{\beta^*} \frac{(f(x_0) - f^*)}{\epsilon^2}  . \label{eq: 20}
\end{align}
And then, the total SFO complexity is given by
\begin{align*}
(K+q)\frac{n}{q} + K|\xi| \le O(n+\sqrt{n}\epsilon^{-2}).
\end{align*}
Thus the SFO complexity of the Algorithm \ref{SPIDER-SQN-M} is $O(n+\sqrt{n}\epsilon^{-2})$ corresponding to Algorithm \ref{thm: ssqnm}.

\section{Proof of Algorithm \ref{thm: ssqnmer}}
The convergence proof of Algorithm \ref{thm: ssqnmer}, including both SpiderSQNMER and SpiderSQNMED , follows from that of Algorithm \ref{thm: ssqnm}, and therefore we only describe the key steps to adapt the proof.

We first prove the result of SpiderSQNMED. Under the epochwise-diminishing momentum scheme, the momentum coefficient is set to be $\alpha_{k} = \frac{2}{\ceil[]{k/q}+1}$. Consequently, we have $\Gamma_{k} = \frac{2}{\ceil[]{k/q}(\ceil[]{k/q}+1)}$. First, one can check that Eq.~(\ref{eq15}) still holds, and now we have $\sum_{t=k}^{K-1} \Gamma_{t} \le \frac{2}{\ceil[]{k/q}}$. Then, we follow the steps that bound the accumulation error term $T$ in Eq.~(\ref{eq: 18}). In the derivation of (ii), we now have that $\sum_{i=(\tau(k)-1)q}^{\tau(k)q-1} \alpha_{i+2}^2\Gamma_{i+1} \le \frac{2}{\tau(k)^3}$. Substituting this new bound into (ii) and noting that in (iii) we now have $\frac{\alpha_{k+1}}{\Gamma_{k+1}} = (\ceil[]{k/q}+1)$, one can follow the subsequent steps and show that the upper bound for T in Eq.~(\ref{eq: 19}) still holds. Moreover, in Eq.~(\ref{eq: 31}) we should replace $\frac{L(\lambda_k - \beta_k)^2}{k\Gamma_{k+1}\alpha_{k+1}}$ with $\frac{L(\lambda_k - \beta_k)^2}{\ceil[]{k/q}\Gamma_{k+1}\alpha_{k+1}}$, and consequently Eq.~(\ref{eq: 32}) is still valid. Then, one can follow the same analysis and show that Eq.~(\ref{eq: 27}) is still valid.
In summary, given the same parameters as for SpiderSQNM the convergence rate and the corresponding oracle complexity of SpiderSQNMED remain in the same order as SpiderSQNM, that is, $O(n+\sqrt{n}\epsilon^{-2})$ given the parameters as Algorithm \ref{thm: ssqnmer}.

The convergence proof of SpiderSQNMER follows from that of SpiderSQNM. The core idea is to apply the result of SpiderSQNM to each restart period.
Specifically, consider the iterations $k=0,1,...,q-2$. Firstly, we can rewrite Eq.~(\ref{eq: proof4}) as
\begin{align}
\mathbb{E}  \|\nabla f(z_\zeta)-v_\zeta\|^2
&\le 3L^2\beta^2\sigma_{\mathrm{max}}^2 \frac{(f(x_0) - f^*)}{K\beta^*}, \nonumber \\
&=O(\frac{(f(x_0) - f^*)}{K}).\label{eq: proof41}
\end{align}
As no restart is performed within these iterations, we can apply the result in Eq.~(\ref{eq: proof41}) (note that $f^*$ is the relaxation of $f(x_K)$) obtained from the analysis of Algorithm \ref{thm: ssqnm} and conclude that
\begin{align}
\mathbb{E}  \|\nabla f(z_{\zeta})\|^2 \le O\bigg(\frac{(f(x_0) - \mathbb{E}[f(x_{q-1})])}{q-1} \bigg), ~\text{where}~ \zeta \overset{\text{Unif}}{\sim}\{0,...,q-2\}.
\end{align}
Due to the periodic restart, the above bound also holds similarly for the iterations $k=tq,tq+1,...,(t+1)q-2$ for any $t\in \mathbb{N}$, which yields that
\begin{align}
\mathbb{E}  \| \nabla f(z_{\zeta})\|^2 \le O\bigg(\frac{(f(x_{tq}) - \mathbb{E}[f(x_{(t+1)q-1})])}{q-1} \bigg), ~\text{where}~ \zeta \overset{\text{Unif}}{\sim}\{tq,...,(t+1)q-2\}. \label{eq: 21}
\end{align}
Next, consider running the algorithm with restart for iterations $k=0,...,K-1$, and the output index $\zeta$ is selected from $\{k: 0\le k\le K-1, \textrm{mod}(k,q-1)\ne0 \}$ uniformly at random. Let $T = \ceil[\Big]{\frac{K}{q-1}}$. Then, we can obtain the following estimate
\begin{align}
\mathbb{E}  \| \nabla f(z_{\zeta})\|^2 &\le \frac{1}{K-T} \sum_{t=0}^{T} \sum_{k=tq}^{(t+1)q-2} \mathbb{E}  \| \nabla f(z_{k})\|^2 \nonumber\\
&\overset{(i)}{\le} O \bigg(\frac{1}{K-T} \sum_{t=0}^{T} \mathbb{E}(f(x_{tq}) - f(x_{(t+1)q-1})) \bigg) \nonumber\\
&\overset{(ii)}{\le} O \bigg(\frac{(f(x_{0}) - f^*)}{K} \bigg), \nonumber
\end{align}
where (i) uses the results inductively derived from Eq.~(\ref{eq: 21}) and (ii) uses the fact that $x_{(t+1)q-1} = x_{(t+1)q}$ due to restart.

Therefore, it follows that $\mathbb{E}  \|\nabla f(z_{\zeta})\| \le \epsilon$ whenever $K \ge O(\frac{(f(x_{0}) - f^*)}{\epsilon^2})$, and the total number of stochastic gradient calls is in the order of $O(n+\sqrt{n}\epsilon^{-2})$ given the parameters as Algorithm \ref{thm: ssqnmer}.
\section{Proof of Algorithm \ref{thm: ssqn-online}} \label{proof_of_ssqn-online}
As for online case when $\textrm{mod}(k,q)=0$, the Algorithm \ref{Spider-SQN-online} samples $\xi_0$ data points to estimate the gradient, and we obtain the following variance bound based on Algorithm \ref{assum5}.
\begin{align}
	\mathbb{E} \|v_k - \nabla f(x_k)\|^2 &= \mathbb{E} \bigg\| \frac{1}{|\xi_1|}\sum_{i = 1}^{|\xi_1|} \nabla \ell_{u_i}(x_k) - \nabla f(x_k) \bigg\|^2  \le \frac{1}{|\xi_1|^2} \sum_{i =1}^{|\xi_1|} \mathbb{E}\left\|   \nabla \ell_{u_i}(x_k) - \nabla f(x_k)\right\|^2 \le \frac{\sigma_1^2}{|\xi_0|}.
\end{align}

Through telescoping \ref{lemma: Zhang} and using the above bound, we obtain the following lemma.
\begin{lemma}\label{aux: 5}
	Under Assumptions \ref{assum1}, \ref{assum2} and \ref{assum5} , the estimation of gradient $v_k$ constructed by Algorithm \ref{Spider-SQN-online} satisfies that for all $k\in \mathbb{N}$,
	\begin{align}
	\mathbb{E} \|v_k - \nabla f(z_k)\|^2 \le \sum_{i=(\tau(k)-1)q}^{k-1}\frac{L^2}{|\xi_i|} \mathbb{E}\|z_{i+1} - z_{i}\|^2 + \frac{\sigma_1^2}{|\xi_0|}. \label{eq: var online}
	\end{align}
\end{lemma}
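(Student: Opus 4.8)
The plan is to mirror the telescoping argument used for the finite-sum estimator in Lemma~\ref{lemma: dound}, with two modifications dictated by the online algorithm: the gradients are now evaluated at the momentum iterate $z_k$ rather than at $x_k$, and the estimator is no longer exact at the start of each epoch. First I would record the one-step SPIDER recursion in the form adapted to the iterates $z_k$, namely $\mathbb{E}\|v_k - \nabla f(z_k)\|^2 \le \frac{L^2}{|\xi_k|}\mathbb{E}\|z_k - z_{k-1}\|^2 + \mathbb{E}\|v_{k-1} - \nabla f(z_{k-1})\|^2$. This is exactly Lemma~\ref{lemma: Zhang} with $z$ in place of $x$; the substitution is legitimate because in Algorithm~\ref{Spider-SQN-online} the point $z_k$ is determined before the fresh mini-batch $\xi_k$ is drawn, so $\mathbb{E}_{\xi_k}[\nabla f_{\xi_k}(z_k) - \nabla f_{\xi_k}(z_{k-1})] = \nabla f(z_k) - \nabla f(z_{k-1})$ and the cross term in expanding the squared norm vanishes, just as in the argument of \cite{fang2018spider}.

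Next I would telescope this recursion across the current epoch. Writing $k_0 = (\tau(k)-1)q$ for the first index of the epoch containing $k$, summing the one-step bound over the indices $k_0+1,\dots,k$ and cancelling the intermediate terms $\mathbb{E}\|v_i - \nabla f(z_i)\|^2$ gives $\mathbb{E}\|v_k - \nabla f(z_k)\|^2 \le \sum_{i=k_0}^{k-1}\frac{L^2}{|\xi_i|}\mathbb{E}\|z_{i+1} - z_i\|^2 + \mathbb{E}\|v_{k_0} - \nabla f(z_{k_0})\|^2$, which is already the claimed bound apart from the residual epoch-start term.

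It then remains to control this residual. At the epoch start one has $\mathrm{mod}(k_0,q)=0$, so the algorithm sets $v_{k_0} = \nabla f_{\xi_0}(z_{k_0})$, an average of $|\xi_0|$ samples drawn from $\mathbb{P}$. Because each $\nabla f_u(z_{k_0})$ is an unbiased estimator of $\nabla f(z_{k_0})$ with variance at most $\sigma_1^2$ by Assumption~\ref{assum5}, and the streamed samples are mutually independent, the variance of their average satisfies $\mathbb{E}\|v_{k_0} - \nabla f(z_{k_0})\|^2 \le \sigma_1^2/|\xi_0|$ — this is precisely the bound displayed immediately before the lemma. Substituting it into the telescoped inequality yields the stated estimate.

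The step requiring the most care is the treatment of the epoch-start term: in the finite-sum case this residual was exactly zero because the full gradient is computed, whereas here it is only small, and it is this nonzero variance that later forces the online batch sizes $|\xi_0|,|\xi_k| \sim \sigma_1^2/\epsilon^2$ and ultimately produces the $\mathcal{O}(\epsilon^{-3})$ complexity rather than $n$-dependent rates. The other point to check is that the SPIDER recursion of Lemma~\ref{lemma: Zhang} genuinely transfers to the $z_k$ iterates under the online sampling scheme; since within each epoch the batches are independent across iterations and $z_k$ is measurable with respect to the past, the unbiasedness needed for the recursion holds and the telescoping goes through as in the finite-sum proof.
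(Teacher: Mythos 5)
Your proposal is correct and takes essentially the same route as the paper's proof: telescope the one-step SPIDER recursion of Lemma~\ref{lemma: Zhang} (applied to the $z_k$ iterates) across the current epoch, then bound the epoch-start residual $\mathbb{E}\|v_{k_0}-\nabla f(z_{k_0})\|^2$ by $\sigma_1^2/|\xi_0|$ via the unbiasedness, bounded variance, and independence of the $|\xi_0|$ streamed samples guaranteed by Assumption~\ref{assum5}. The paper does exactly this, so no further comment is needed.
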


Then we can begin the proof of Algorithm \ref{thm: ssqn-online} by applying Algorithm \ref{aux: 5} to step (i) at Eq.~(\ref{eq: 1}), and we can get
\begin{align}
	\mathbb{E} &f(x_{k+1}) \nonumber \\
	 &\le \mathbb{E} f(x_k) + \frac{\eta \sigma_{\mathrm{max}}}{2} \mathbb{E}\|\nabla f(x_k)-v_k\|^2  - (\frac{\eta\sigma_{\mathrm{min}}}{2} - \frac{L\eta^2 \sigma_{\mathrm{max}}^2}{2}) \mathbb{E}\|v_k\|^2  \nonumber\\
	&\overset{(i)}{\le}  \mathbb{E} f(x_k) + \frac{\eta \sigma_{\mathrm{max}}}{2} \sum_{i=(n_k-1)q}^{k }\frac{L^2}{|\xi_k|} \mathbb{E}\|x_{i+1} - x_{i}\|^2 + \frac{\eta \sigma_{\mathrm{max}}}{2}\mathbb{E}\| v_{(n_k-1)q}-\nabla f(x_{(n_k-1)q})\| \nonumber\\
&- (\frac{\eta\sigma_{\mathrm{min}}}{2} - \frac{L\eta^2 \sigma_{\mathrm{max}}^2}{2}) \mathbb{E}\|v_k\|^2  \nonumber\\
	&\overset{(ii)}{=} \mathbb{E} f(x_k) + \frac{\eta^3\sigma_{\mathrm{max}}^3}{2} \sum_{i=(n_k-1)q}^{k}\frac{L^2}{|\xi_k|} \mathbb{E}\|v_i\|^2 + \frac{\eta \sigma_{\mathrm{max}}}{2} \frac{\sigma_1^2}{|\xi_0|}-  (\frac{\eta\sigma_{\mathrm{min}}}{2} - \frac{L\eta^2 \sigma_{\mathrm{max}}^2}{2}) \mathbb{E}\|v_k\|^2. \label{eq: 22}
\end{align}
 Then, one can follow the same analysis and obtain:
\begin{align}
	\mathbb{E} \|\nabla f(x_\zeta)\|^2  &\le \frac{2}{\beta^*} \left(1+   \frac{ L^2 \eta^2\sigma_{\mathrm{max}}^2q}{ |\xi_k| }  \right)\frac{\left( f(x_0) -  f^*  \right)}{K} + \left( \frac{\eta\sigma_{\mathrm{max}}}{ \beta^*} + 2 + \frac{ L^2 \eta^3\sigma_{\mathrm{max}}^3q}{  |\xi_k| \beta^*} \right)\frac{\sigma_1^2}{|\xi_0|}. \label{final3}	
\end{align}
To make the right hand side be smaller than $\epsilon^2$, $K\ge \frac{2}{\beta^*} \left(1+ \frac{ L^2 \eta^2\sigma_{\mathrm{max}}^2q}{ |\xi_k| }  \right)\frac{2\left( f(x_0) -  f^*  \right)}{\epsilon^2} $, $|\xi_0|\ge \left( \frac{\eta\sigma_{\mathrm{max}}}{ \beta^*} + 2 + \frac{ L^2 \eta^3\sigma_{\mathrm{max}}^3q}{ |\xi_k|\beta^*} \right)\frac{2\sigma_1^2}{\epsilon^2}$ is necessary. Let
\begin{align}
	  q =|\xi_k| = \sqrt{|\xi_0|}, \eta \equiv \frac{(1+\sqrt{5})\sigma_{\mathrm{min}}}{2L\sigma_{\mathrm{max}}^2} , \label{online_parameter_setting}
\end{align}
where $|\xi_0|$ is set as $|\xi_0|= \left( \frac{\eta\sigma_{\mathrm{max}}}{ \beta^*} + 2 + \frac{ L^2 \eta^3\sigma_{\mathrm{max}}^3}{ \beta^*} \right)\frac{2\sigma_1^2}{\epsilon^2}$. This proves the desired iteration complexity, and the total number of stochastic gradient oracle calls is at most $(K+q)\frac{|\xi_0|}{q} + K|\xi_k|$. With the parameters setting, we obtain the total SFO complexity as $O(\epsilon^{-3})$.

\section{Proof of Algorithm \ref{thm: ssqnm-online}}
Firstly, one can check that Eq.~(\ref{eq: 16}) still holds in the online case. And then, one can apply Algorithm \ref{aux: 5} to Eq.~(\ref{eq: 17}) and follow the proof of Eq.~(\ref{eq: 27}). One can check that there is an additional term $\sum_{k=0}^{K-1} \frac{\lambda_{k}\sigma_1^2}{2|\xi_1|}$ in the online case, and we obtain the following bound.
\begin{align}
\mathbb{E} [f(x_{K})] &\le f(x_{0}) - \sum_{k=0}^{K-1} {\beta^*}\mathbb{E}\| v_{k}\|^2 + \sum_{k=0}^{K-1} \frac{\lambda_{k}\sigma_1^2}{2|\xi_0|} \nonumber\\
&\le f(x_{0}) - \sum_{k=0}^{K-1} {\beta^*}\mathbb{E}\| v_{k}\|^2 + \frac{K\beta\sigma_1^2}{|\xi_0|}. \label{eq: 28}
\end{align}
Then, it follows that $\frac{1}{K}\sum_{k=0}^{K-1}\mathbb{E}\|v_{k}\|^2 \le (f(x_0) - f^*)/(K\beta^*) + \frac{\beta \sigma_1^2}{\beta^*|\xi_0|}$. One can check that Eq.~(\ref{eq: 29}) still holds, and we only need to update the bound for the term $\mathbb{E}  \|\nabla f(z_{\zeta}) - v_{\zeta}\|^2$ as follows
\begin{align}
	\mathbb{E}  \|\nabla f(z_{\zeta}) - v_{\zeta}\|^2 \le 3L^2\beta^2 \frac{16(f(x_0) - f^*)}{K\beta^*} + \frac{\sigma_1^2}{|\xi_0|}. \label{eq: 40}
\end{align}
Then, we finally obtain that
\begin{align}
\mathbb{E}  \| \nabla f(z_{\zeta})\|^2  &\le \frac{6L^2\beta^2+2}{\beta^*} \frac{(f(x_0) - f^*)}{K} +2(1+\frac{\beta}{\beta^*})\frac{\sigma_1^2}{|\xi_0|} . \label{eq: final4}
\end{align}
To make the right hand side be smaller than $\epsilon^2$, we can set $K\ge \frac{2(6L^2\beta^2+2)(f(x_0) - f^*)}{\beta^*\epsilon^2}$, $|\xi_0|\ge \frac{4(1+\beta/\beta^*)\sigma_1^2}{\epsilon^2}$,
and let
\begin{align}
	  q =\xi_k = \sqrt{|\xi_0|}, \beta_k \equiv \frac{\sigma_{\mathrm{min}}}{(3+\sqrt{15})L\sigma_{\mathrm{max}}^2} \label{eq:5} ,
\end{align}
where  $|\xi_0|$ is set as $|\xi_0|= \frac{4(1+\beta/\beta^*)\sigma_1^2}{\epsilon^2}$. The total number of stochastic gradient oracle calls is at most $(K+q)\frac{|\xi_0|}{q} + K|\xi_k|$. By parameters setting as Eq.~(\ref{eq:5}) we obtain the total SFO complexity as $O(\epsilon^{-3})$.

\section{Proof of Algorithm \ref{thm: ssqnmer-online}}
The convergence proof of Algorithm \ref{thm: ssqnmer-online}, including both online SpiderSQNMER and online SpiderSQNMED, follows from that of Theorem~\ref{thm: ssqnmer}. Especially, one just consider the additional variance bounded by $\sigma_1$ and therefore we only describe the key steps to adapt the proof.

We first prove the result of online SpiderSQNMED. Under the epochwise-diminishing momentum scheme, the momentum coefficient is set to be $\alpha_{k} = \frac{2}{\ceil[]{k/q}+1}$. Consequently, we have $\Gamma_{k} = \frac{2}{\ceil[]{k/q}(\ceil[]{k/q}+1)}$. First, one can check that Eq.~(\ref{eq: 15}) still holds, and now we have $\sum_{t=k}^{K-1} \Gamma_{t} \le \frac{2}{\ceil[]{k/q}}$. Then, we follow the steps that bound the accumulation error term $T$ in Eq.~(\ref{eq: 18}). In the derivation of (ii), we now have that $\sum_{i=(\tau(k)-1)q}^{\tau(k)q-1} \alpha_{i+2}^2\Gamma_{i+1} \le \frac{2}{\tau(k)^3}$. Substituting this new bound into (ii) and noting that in (iii) we now have $\frac{\alpha_{k+1}}{\Gamma_{k+1}} = (\ceil[]{k/q}+1)$, one can follow the subsequent steps and show that the upper bound for T in Eq.~(\ref{eq: 19}) still holds. Moreover, in Eq.~(\ref{eq: 31}) we should replace $\frac{L(\lambda_k - \beta_k)^2}{k\Gamma_{k+1}\alpha_{k+1}}$ with $\frac{L(\lambda_k - \beta_k)^2}{\ceil[]{k/q}\Gamma_{k+1}\alpha_{k+1}}$, and consequently Eq.~(\ref{eq: 32}) is still valid.
Then, one can check that Eq.~(\ref{eq: final4}) that is

\begin{align}
\mathbb{E}  \| \nabla f(z_{\zeta})\|^2  &\le \frac{6L^2\beta^2+2}{\beta^*} \frac{(f(x_0) - f^*)}{K} +2(1+\frac{\beta}{\beta^*})\frac{\sigma_1^2}{|\xi_0|},  \label{eqfinal6}
\end{align}
is still valid. To make the right hand side of above equation be smaller than $\epsilon^2$, we can set $K\ge \frac{2(6L^2\beta^2+2)(f(x_0) - f^*)}{\beta^*\epsilon^2}$, $|\xi_0|\ge \frac{4(1+\beta/\beta^*)\sigma_1^2}{\epsilon^2}$,
and let
\begin{align}
	  q =\xi_k = \sqrt{|\xi_0|},  \beta_k \equiv \frac{\sigma_{\mathrm{min}}}{(3+\sqrt{15})L\sigma_{\mathrm{max}}^2} ,
\end{align}
where $|\xi_0|$ is set to $|\xi_0|= \frac{4(1+\beta/\beta^*)\sigma_1^2}{\epsilon^2}$.The total number of stochastic gradient oracle calls is at most $(K+q)\frac{|\xi_0|}{q} + K|\xi_k|$. By setting $q=|\xi_k|=\sqrt{|\xi_0|}$, and we obtain the total SFO complexity as $O(\epsilon^{-3})$.

In summary, given the same parameters as for SpiderSQNM the convergence rate and the corresponding oracle complexity of SpiderSQNMED remain in the same order as SpiderSQNM that is $O(n+\sqrt{n}\epsilon^{-2})$. One can follow the same analysis as Algorithm \ref{thm: ssqnm-online} and. The convergence proof of online SpiderSQNMER follows from that of online SpiderSQNM. The core idea is to apply the result of online SpiderSQNM to each restart period.
Specifically, consider the iterations $k=0,1,...,q-2$. Firstly, we can rewrite Eq.~(\ref{eq: final4}) as
\begin{align}
\mathbb{E}  \| \nabla f(z_{\zeta})\|^2  &\le \frac{6L^2\beta^2+2}{\beta^*} \frac{(f(x_0) - f^*)}{K} +2(1+\frac{\beta}{\beta^*})\frac{\sigma_1^2}{|\xi_0|} \nonumber\\
&=O(\frac{(f(x_0) - f^*)}{K} + \frac{1}{|\xi_0|}) \label{eq: final6}.
\end{align}
As no restart is performed within these iterations, we can apply the result in Eq.~(\ref{eq: proof41}) (note that $f^*$ is the relaxation of $f(x_K)$) obtained from the analysis of Algorithm \ref{thm: ssqnm} and conclude that
\begin{align}
\mathbb{E}  \|\nabla f(z_{\zeta})\|^2 \le O\bigg(\frac{(f(x_0) - \mathbb{E}[f(x_{q-1})])}{q-1} + \frac{1}{|\xi_0|}\bigg), ~\text{where}~ \zeta \overset{\text{Unif}}{\sim}\{0,...,q-2\}.
\end{align}
Due to the periodic restart, the above bound also holds similarly for the iterations $k=tq,tq+1,...,(t+1)q-2$ for any $t\in \mathbb{N}$, which yields that
\begin{align}
\mathbb{E}  \| \nabla f(z_{\zeta})\|^2 \le O\bigg(\frac{(f(x_{tq}) - \mathbb{E}[f(x_{(t+1)q-1})])}{q-1}+ \frac{1}{|\xi_0|} \bigg), ~\text{where}~ \zeta \overset{\text{Unif}}{\sim}\{tq,...,(t+1)q-2\}. \label{eq21}
\end{align}
Next, consider running the algorithm with restart for iterations $k=0,...,K-1$, and the output index $\zeta$ is selected from $\{k: 0\le k\le K-1, \textrm{mod}(k,q-1)\ne0 \}$ uniformly at random. Let $T = \ceil[\Big]{\frac{K}{q-1}}$. Then, we can obtain the following estimate
\begin{align}
\mathbb{E}  \| \nabla f(z_{\zeta})\|^2 &\le \frac{1}{K-T} \sum_{t=0}^{T} \sum_{k=tq}^{(t+1)q-2} \mathbb{E}  \| \nabla f(z_{k})\|^2 \nonumber\\
&\overset{(i)}{\le} O \bigg(\frac{1}{K-T} \sum_{t=0}^{T} (\mathbb{E}(f(x_{tq}) - f(x_{(t+1)q-1})+ \frac{q-1}{|\xi_0|}) )\bigg) \nonumber\\
&\overset{(ii)}{\le} O \bigg(\frac{(f(x_{0}) - f^*)}{K} + \frac{1}{|\xi_0|} \bigg), \nonumber
\end{align}
where (i) uses the results inductively derived from Eq.~(\ref{eq21}) and (ii) uses the fact that $x_{(t+1)q-1} = x_{(t+1)q}$ due to restart.
To make the right hand side be smaller than $\epsilon^2$, we can set $K\ge \frac{2(6L^2\beta^2+2)(f(x_0) - f^*)}{\beta^*\epsilon^2}$, $|\xi_0|\ge \frac{4(1+\beta/\beta^*)\sigma_1^2}{\epsilon^2}$,
and let
\begin{align}
	  q =|\xi_k| = \sqrt{|\xi_0|}, \beta_k \equiv \frac{\sigma_{\mathrm{min}}}{(3+\sqrt{15})L\sigma_{\mathrm{max}}^2} \label{eq5} ,
\end{align}
where  $|\xi_0|$ is set as $|\xi_0|= \frac{4(1+\beta/\beta^*)\sigma_1^2}{\epsilon^2}$. The total number of stochastic gradient oracle calls is at most $(K+q)\frac{|\xi_0|}{q} + K|\xi_k|$. By parameters setting as Eq.~(\ref{eq5}) we obtain the total SFO complexity as $O(\epsilon^{-3})$.

\subsection{Proof of Theorem  for Lower Bound}
When do convergence analyses, we only use the first-order information, as defined in \cite{carmon2017lower}, our method is a first-order method. Therefore, the proof can be a direct extension of \cite{carmon2017lower,fang2018spider}.
Before drilling into the proof of Theorem \ref{theo:lowerbdd}, it is necessary  for us to introduce the hard instance $\tf_M$  {with $M\geq 1$} constructed by \cite{carmon2017lower}.
	\begin{eqnarray}
	\hf_M(\x) -\Psi(1)\Phi(x_1)
 +\sum_{i=2}^M \left[\Psi(-x_{i-1})\Phi(-x_i) - \Psi(x_{i-1})\Phi(x_i) \right],
	\end{eqnarray}
where the component functions are
\begin{eqnarray}
\Psi(x) =\left\{
\begin{array}{ll}
0&x\leq \frac{1}{2}\\
\exp\left(1- \frac{1}{(2x-1)^2}\right)&x> \frac{1}{2}
\end{array}
\right.
\end{eqnarray}
and
\begin{eqnarray}
\Phi(x)= \sqrt{e} \int_{-\infty}^{x} e^{-\frac{t^2}{2}},
\end{eqnarray}
where $x_i$ denote the value of $i$-th coordinate of $\x$, with $i\in  [d]$.
$\hf_M(\x)$ constructed by \cite{carmon2017lower} is a zero-chain function, that is for every $i\in [d]$, $\nabla_i f(\x) =0$ whenever $x_{i-1} = x_i =x_{i+1}$.  Therefore, any deterministic algorithm can just recover ``one'' dimension in each iteration \cite{carmon2017lower}.  Moreover, it satisfies that : If $|x_i|\leq 1$ for any $i \leq M$,
\begin{eqnarray}
\left\| \nabla \hf_M(\x)\right\| \geq 1.
\end{eqnarray}

Then to handle random algorithms, \cite{carmon2017lower} further consider the following extensions:
\begin{eqnarray}
\tf_{M,\B^M}(\x)  = \hf_M\left((\B^M)^\bT \rho(\x)\right)
+\frac{1}{10}\|\x\|^2 =\hf_M\left( \left\langle \mathbf{b}^{(1)}, \rho(\x)  \right\rangle, \ldots, \left\langle\mathbf{b}^{(M)}, \rho(\x) \right\rangle \right)+\frac{1}{10}\|\x\|^2,
\end{eqnarray}
where $\rho(\x) = \frac{\x}{\sqrt{1+ \| \x \|^2/R^2}}$ and $R = 230 \sqrt{M}$, $\B^M$ is chosen uniformly at random from the space of orthogonal matrices $\cO(d,M) =\{\D \in \RR^{d\times M}\vert \D^\top \D = I_M \ \}$.
The function $\tf_{M,\B}(\x)$ satisfies the following{:}
\begin{enumerate}[]
	\item
	\begin{eqnarray}\label{bound}
	\tf_{M,\B^M}(\mathbf{0}) - \inf_{\x} \tf_{M,\B^M}(\x) \leq 12 M.
	\end{eqnarray}
	\item $\tf_{M,\B^M}(\x)$ has  constant $l$ (independent of $M$ and $d$) Lipschitz continuous gradient.
	\item if $d\geq 52 \cdot 230^2 M^2 \log (\frac{2M^2}{p})$, for any algorithm $\cA$ {solving} \ref{eq: P} (finite-sum case)  with $n=1$, and $f(\x) =\tf_{M,\B^M}(\x)$, then with probability $1-p$,
	\begin{eqnarray}\label{bound1}
	\left\|\nabla \tf_{M,\B^M}(\x^k) \right\| \geq \frac{1}{2}, \quad  \text{for every } k\leq M.
	\end{eqnarray}
\end{enumerate}
The above properties found by \cite{carmon2017lower} is very technical.  One can refer to \cite{carmon2017lower} for more details.

\begin{proof}[Proof of Theorem \ref{theo:lowerbdd}]
	Our lower bound theorem proof is as follows. Following the proof in \cite{fang2018spider}, we further take the number of individual function $n$ into account which is slightly different from Theorem 2 in \cite{carmon2017lower}.
	Set
	\begin{eqnarray}
	f_i(\x) = \frac{ln^{1/2}\epsilon^2}{L}\tf_{M,\B_i^M}(\C_i^\bT\x/b)= \frac{ln^{1/2}\epsilon^2}{L} \left(\hf_{M}\left( (\B_i^M)^\bT\rho(\C^\bT_i\x/b)\right) +\frac{1}{10}\left\|\C^\bT_i \x/b\right\|^2\right),
	\end{eqnarray}
	and
	\begin{eqnarray}
	f(\x) = \frac{1}{n}\sum_{i=1}^n f_i(\x).
	\end{eqnarray}
	where $\B^{nM} =  [\B^M_1, \ldots,\B^M_n]$  is chosen uniformly at random from the space of orthogonal matrices $\cO(d,M) =\{\D \in \RR^{(d/n)\times (nM)}\vert \D^\top \D = I_{(nM)}
\ \}$, with each $\B^M_i \in \{\D \in \RR^{(d/n)\times (M)}\vert \D^\top \D = I_{(M)} \ \}$, $i\in [n]$,  $\C =  [\C_1, \ldots,\C_n]$   is an arbitrary  orthogonal matrices $\cO(d,M) =\{\D \in \RR^{d\times d}\vert \D^\top \D = I_{d} \ \}$, with each $\C^M_i \in \{\D \in \RR^{(d)\times (d/n)}\vert \D^\top \D = I_{(d/n)} \ \}$, $i\in [n]$.  $M =  \frac{\Delta L}{12ln^{1/2}\epsilon^2}$,  with {$n\leq \frac{144 \Delta^2 L^2}{l^2 \epsilon^4}$ (to ensure $M\geq 1$)}, $b = \frac{l\epsilon}{L}$, and $R = \sqrt{230M}$. We first verify that $f(\x)$ satisfies Assumption~\ref{assum1}.
	For  Assumption~\ref{assum1},  from \eqref{bound}, we have
	$$ f(\mathbf{0}) -\inf_{\x\in \RR^d} f(\x)\leq \frac{1}{n}\sum_{i=1}^n ( f_i(\mathbf{0}) -\inf_{\x\in \RR^d} f_i(\x)) \leq \frac{ln^{1/2}\epsilon^2}{L} 12M=  \frac{ln^{1/2}\epsilon^2}{L}\frac{12\Delta L}{12ln^{1/2}\epsilon^2} =\Delta\footnote{\text{If $\x^0 \neq \mathbf{0}$, we can simply translate the counter example as $f'(\x) = f(\x - \x^0)$, then $ f'(\x^0) -\inf_{\x\in \RR^d} f'(\x)\leq \Delta$.  }}.$$
	For  Assumption \ref{assum2},  for any {$i$}, using the $\tf_{M,\B_i^M}$ has $l$-Lipschitz continuous gradient, we have
	\begin{eqnarray}
	\left\|   \nabla \tf_{M,\B_i^M}(\C_i^\bT\x/b)  - \nabla \tf_{M,\B_i^M}(\C_i^\bT\y/b)    \right\|^2 \leq  l^2\left\|\C_i^\bT (\x -\y)/b \right\|^2,
	\end{eqnarray}
	Because $\| \nabla f_i(\x)  -\nabla f_i(\y)\|^2 = \left\|\frac{ln^{1/2}\epsilon^2}{Lb} \C_i\left(  \nabla \tf_{M,\B^M_i}(\C_i^\bT\x/b)  - \nabla \tf_{M,\B^M_i}(\C_i^\bT\y/b)\right)\right\|^2$, and using $\C_i^\top\C_i = I_{d/n}$, we have
	\begin{eqnarray}
	\left\| \nabla f_i(\x)  -\nabla f_i(\y) \right\|^2 \leq \left(\frac{ln^{1/2}\epsilon^2}{L}\right)^2 \frac{l^2}{b^4} \left\| \C_i^\bT (\x -\y)  \right\|^2 = n L^2 \left\|\C_i^\bT (\x -\y) \right\|^2,
	\end{eqnarray}
	where we use $b = \frac{l\epsilon}{L}$.
	Summing $i =1, \ldots, n$ and using each $\C_i$ are orthogonal matrix, we have
	\begin{eqnarray}
	\E \| \nabla f_i(\x)  -\nabla f_i(\y) \|^2 \leq L^2\| \x -\y\|^2.
	\end{eqnarray}
	Then with
	$$d\geq  2\max(9n^3M^2,12n^2MR^2)\log \left(\frac{2n^3M^2}{p}\right) + n^2M  \sim \cO\left(\frac{n^2 \Delta^2 L^2}{\epsilon^{4}}\log\left(\frac{n^2 \Delta^2 L^2}{\epsilon^{4}p}\right)\right),$$
	from Lemma 2 of \cite{carmon2017lower} (or Lemma~12 in \cite{fang2018spider}, also refer to Lemma \ref{lowerbound}  in this paper),  with probability at least $1-p$, after $T = \frac{nM}{2}$ iterations (at the end of iteration $T-1$), for all $I^{T-1}_i$ with $i\in [d]$,   if $I^{T-1}_i< M$,  then for any $j_i \in \{I^{T-1}_i+1, \ldots, M \}$,  we have $ \left\langle \mathbf{b}_{i,j_i},\rho(\C_i^\bT\x/b)   \right\rangle \leq \frac{1}{2}$,  where $I^{T-1}_i$  denotes that the algorithm $\cA$ has called individual function $i$ with $I^{T-1}_i$ times ($\sum_{i=1}^n I^{T-1}_i = T$) at the end of iteration $T-1$,  and $\mathbf{b}_{i,j}$ denotes the $j$-th column of $\B^M_{i}$.  However, from \eqref{bound1}, if $ \left\langle\mathbf{b}_{i,j_i},\rho(\C_i^\bT\x/b)   \right\rangle \leq \frac{1}{2}$, we will have   $\|\nabla \tf_{M,\B_i^M}(\C^\bT_i\x/b)\|\geq\frac{1}{2} $.  So $f_i$ can be solved only after $M$ times calling it.
	
	From the above analysis, for any algorithm $\cA$, after running $	T=\frac{nM}{2} = \frac{\Delta Ln^{1/2}}{24l\epsilon^2}$ iterations, at least $\frac{n}{2}$ functions cannot be solved (the worst case is when  $\cA$ exactly solves $\frac{n}{2}$ functions), so
	\begin{eqnarray}
	&&\left\|\nabla f(\x^{nM/2}) \right\|^2 =  \frac{1}{n^2} \left\|\sum_{i  \text{ not solved}} \frac{ln^{1/2}\epsilon^2}{Lb} \C_i  \nabla \tf_{M,\B_i^M}(\C_i^\bT\x^{nM/2}/b) \right\|^2\notag\\
	&& \overset{a}{=} \frac{1}{n^2} \sum_{i \text{ not solved}}\left\| n^{1/2}  \epsilon \nabla \tf_{M,\B_i^M}(\C_i^\bT\x^{nM/2}/b) \right\|^2\overset{\eqref{bound1}}{\geq} \frac{\epsilon^2}{8},
	\end{eqnarray}
	where in $\overset{a}=$, we use $\C_i^\top\C_j=\mathbf{0}_{d/n}$, when $i\neq j$, and $\C_i^\top\C_i= I_{d/n}$.	
\end{proof}

\begin{lemma}\label{lowerbound}
Let $\{\x\}_{0:T}$ with $T = \frac{nM}{2} $ is informed by a certain algorithm in the form \eqref{algor-need}.  Then when $d\geq 2\max(9n^3M^2,12n^3MR^2)\log (\frac{2n^2M^2}{p}) + n^2M$,  with probability $1-p$, at each iteration $0\leq t\leq T$, $\x^t$ can only recover one  coordinate.
\end{lemma}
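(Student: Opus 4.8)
The plan is to reduce the claim to the single-function rotated hard instance of \cite{carmon2017lower} applied independently inside each of the $n$ orthogonal blocks carved out by $\C_1,\ldots,\C_n$, and to exploit that the oracle model \eqref{algor-need} reveals, at each step $t$, only the gradient $\nabla f_{i_t}(\x^t)$ of the single individual function $i_t$ selected by $\cA$. Consequently only the chain belonging to function $i_t$ can advance at step $t$, so establishing ``at most one new coordinate per iteration'' amounts to showing that a single gradient query cannot activate more than one fresh direction. The structural input is the robust zero-chain property of $\hf_M$ recorded above: since $\nabla_j\hf_M$ vanishes whenever three consecutive coordinates agree, $\nabla f_{i_t}(\x^t)$ carries non-negligible mass only along $\mathbf{b}_{i_t,1},\ldots,\mathbf{b}_{i_t,J_t+1}$, where $J_t$ is the largest index already activated for function $i_t$ (i.e.\ with $\inner{\mathbf{b}_{i_t,J_t}}{\rho(\C_{i_t}^\bT\x/b)}>\tfrac12$). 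Thus the only direction the query can legitimately expose is $\mathbf{b}_{i_t,J_t+1}$, and I must rule out that $\x^{t+1}$ accidentally aligns with any other still-hidden column.

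First I would set up a progressive-revelation filtration in which the columns of each $\B_i^M$ are disclosed only at the instant they become active. Conditioned on the history $\x^0,\ldots,\x^{t}$ and on the columns disclosed so far, each not-yet-disclosed column $\mathbf{b}_{i,j}$ is uniformly distributed on the unit sphere of the orthogonal complement of the disclosed directions inside its $(d/n)$-dimensional block, a subspace of dimension at least $d/n-nM$. By \eqref{algor-need} the iterate $\x^{t+1}$ is a measurable function of the uniform seed $\xi$ and of the already-revealed gradients, hence $\u:=\rho(\C_i^\bT\x^{t+1}/b)$ is measurable with respect to this conditioning and, by the definition $\rho(\x)=\x/\sqrt{1+\|\x\|^2/R^2}$, obeys $\|\u\|\le R$. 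The key probabilistic estimate is then the spherical-cap bound: for fixed $\u$ with $\|\u\|\le R$ and $\mathbf{b}$ uniform on the sphere of a $D$-dimensional subspace, $\PP\!\left(|\inner{\mathbf{b}}{\u}|>\tfrac12\right)\le 2\exp\!\left(-\,D/(8R^2)\right)$.

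Taking a union bound of this estimate over all iterations $t\le T=nM/2$, all functions $i\in[n]$ and all coordinates $j\le M$ controls at most $\tfrac12 n^2M^2$ events, which is exactly where the logarithmic factor $\log(2n^2M^2/p)$ enters. Forcing the total failure probability below $p$ then requires the effective complement dimension $D\ge d/n-nM$ to dominate $R^2\log(2n^2M^2/p)$; carrying out the bookkeeping of \cite{carmon2017lower,fang2018spider} (and using $R^2=230M$ to reconcile the two terms of the maximum, with the additive $n^2M$ absorbing the $nM$ disclosed directions across the $n$ blocks) yields precisely $d\ge 2\max(9n^3M^2,12n^3MR^2)\log(2n^2M^2/p)+n^2M$. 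On the complementary event of probability $1-p$, no hidden inner product ever crosses $\tfrac12$, so at step $t$ only $\mathbf{b}_{i_t,J_t+1}$ can become active and each $\x^t$ recovers at most one coordinate, as claimed.

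The step I expect to be the genuine obstacle is making the conditioning rigorous. Because $\x^{t+1}$ is produced by an arbitrary, possibly randomized, measurable map of the \emph{entire} gradient history, I must argue that at the instant a fresh column $\mathbf{b}_{i,J_t+1}$ is disclosed, the query vector $\u$ is already frozen and independent of that column, so that the spherical-cap bound applies with $\mathbf{b}$ genuinely uniform in the unexposed complement. Setting up this exposure martingale carefully — and verifying that the robust (rather than exact) zero-chain property survives the composition with $\rho$ and the random rotations $\B_i^M$ — is the delicate core of the argument; I would follow the exposure scheme of \cite{carmon2017lower} and its finite-sum adaptation in \cite{fang2018spider} to close it.
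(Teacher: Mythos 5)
Your high-level skeleton (progressive disclosure of columns, conditional uniformity in the unexposed part of the block, union bound over all iterations and columns) does match the architecture of the paper's proof, and you correctly identify the conditioning step as the crux. But the quantitative mechanism you propose --- applying a spherical-cap bound directly to the event $|\inner{\mathbf{b}}{\u}|>\tfrac12$ with the query $\u$ ``frozen'' and $\mathbf{b}$ uniform on the complement of the \emph{disclosed columns} --- is precisely the step that fails, and the paper's proof is structured differently in order to avoid it. Conditional on the trajectory together with the event that all earlier steps were good, a hidden column is \emph{not} uniform on the complement of the disclosed columns: the paper's rotation-invariance argument (the maps $\Z_i$) yields uniformity only of the normalized projection $\cP^{(t-1)\bot}_i\u/\|\cP^{(t-1)\bot}_i\u\|$ onto the orthogonal complement of the span of the disclosed columns \emph{and all past query points} $\y^0_i,\dots,\y^{t-1}_i$ (this is why the paper's revealed set $\bcV^{t}_i$ contains the queries, and why the effective dimension is $d/n-2T$ rather than $d/n-nM$). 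Along that revealed span the conditional law of the hidden column is biased by the past good events, and since a query can have norm up to $R$ after the $\rho$-truncation, the in-span contribution $|\inner{\cP^{(t-1)}_i\u}{\y^t_i}|$ can be as large as $R\,\|\cP^{(t-1)}_i\u\|$. Nothing in your argument controls $\|\cP^{(t-1)}_i\u\|$: with a per-step threshold of $\tfrac12$, the Gram--Schmidt accumulation over $T$ steps only gives $\|\cP^{(t-1)}_i\u\|=O(\sqrt{T})$, so the in-span term swamps the $\tfrac12$ target and the induction collapses.

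This is exactly why the paper's good event $\bcG^t$ thresholds the \emph{innovation} $|\inner{\u}{\cP^{(t-1)\bot}_i\y^t_i}|$ at the much smaller level $a\|\cP^{(t-1)\bot}_i\y^t_i\|$, with $a=\min\bigl(\tfrac{1}{3(T+1)},\tfrac{1}{2(1+\sqrt{3T})R}\bigr)$, and then runs a separate, purely deterministic induction showing $\|\cP^{t-1}_i\u\|\le b=\sqrt{3T}\,a$ for every still-hidden column, so that finally $|\inner{\u}{\y^t_i}|\le(a+b)R\le\tfrac12$. The price of the tiny threshold is the dimension requirement $d/n\gtrsim a^{-2}\log(\cdot)+2T\sim\max\bigl(T^2,TR^2\bigr)\log(\cdot)$, which is precisely the source of the $\max(9n^3M^2,12n^3MR^2)$ term in the lemma. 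Your estimate structure would instead require only $d\gtrsim nR^2\log(\cdot)+n^2M$; the $T^2$-type term $n^3M^2$ has no source in your argument, which is a tell-tale sign that the cap-at-$\tfrac12$ bound cannot ``yield precisely'' the stated hypothesis by bookkeeping alone. To repair the proof you must (i) enlarge the exposed set to include past queries, not just activated columns, and (ii) replace the threshold $\tfrac12$ by the innovation threshold $a$ together with the accumulation induction --- at which point you have reproduced the paper's argument rather than a shortcut around it.
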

\begin{proof}
	The proof is essentially same to \cite{carmon2017lower} and \cite{fang2018spider}.   We give a proof here. Before the poof, we give the following definitions:
	  \begin{enumerate}
	  	\item Let $i^t$ denotes that at iteration $t$, the algorithm choses the $i^t$-th individual function.
	  	\item Let $I^t_i$ denotes the total  times that individual function with index $i$ has been called before iteration $k$. We have  $I^0_i=0$ with $i\in [n]$, $i\neq i^t$, and $I^0_{ i^0}=1$. And  for $t\geq 1$,
	  	\begin{eqnarray}
	   I^t_i=\left\{
	   \begin{aligned}
	  I^{t-1}_i +1, & & \quad i = i_t. \\
	  I^{t-1}_i,&  & \quad \text{otherwise}. \\
	   \end{aligned}
	   \right.
	  	\end{eqnarray}
	   \item Let $\y^t_i =   \rho(\C_i^\bT \x^t) = \frac{\C_i^\bT \x^t}{\sqrt{R^2+ \|\C_i^\bT \x^t\|^2}}$ with $i \in [n]$. We have $\y^t_i\in \RR^{d/n}$ and $\|\y^t_i\|\leq R$.
	  \item Set $\bcV^t_i$ be the set that $ \left(\bigcup_{i=1}^n\left\{ \mathbf{b}_{i, 1}, \cdots  \mathbf{b}_{i, \min(M,I^t_i)}\right\}\right) \bigcup \left\{\y^0_i,\y^1_i, \cdots, \y^t_i\right\}$, where  $\mathbf{b}_{i,j}$ denotes the $j$-th column of $\B^M_{i}$.

	  \item Set $\bcU^t_i$ be the set of  $ \left\{  \mathbf{b}_{i, \min(M,I^{t-1}_i+1)}, \cdots,  \mathbf{b}_{i, M }\right\}$  with $i\in [n]$. $\bcU^t = \bigcup_{i=1}^n \bcU^t_i$. And set $\tilde{\bcU}^t_i =\left\{\mathbf{b}_{i, \min(M,1)}, \cdots, \mathbf{b}_{i, \min(M,I^{t-1}_i)}\right\}$. $\tilde{\bcU}^t = \bigcup_{i=1}^n \tilde{\bcU}^t_i$.
	  \item Let $\cP_i^t\in \mathcal{R}^{(d/n) \times (d/n)}$ denote the projection operator to the span of $\uu\in \bcV_i^t$. And let
	  $\cP^{t\bot}_i$ denote its orthogonal complement.	
\end{enumerate}
Because $\cA^t$ performs measurable mapping, the above terms are all measurable on  $\bxi$ and $\B^{nM}$, where $\bxi$ is the random vector in $\cA$.
It is clear that if for all $0\leq t\leq T$ and $i \in [n]$, we have
\begin{eqnarray}\label{lowerend}
\left| \left\langle \u, \y^t_i    \right\rangle \right| < \frac{1}{2},  \quad  \text{for all~ } \u \in \bcU^t_i.
\end{eqnarray}
then  at each iteration,  we can only recover one index, which is our destination. To prove that \eqref{lowerend} holds with probability at least $1-p$,  we consider a more hard event $\bcG^t$ as
\begin{eqnarray}\label{lowerbound G}
\bcG^t  = \left\{     \left|\left\langle \u, \cP^{(t-1) \bot}_i \y^t_i    \right\rangle\right|\leq  a \|  \cP^{(t-1) \bot}_i \y_i^t\| \mid  \u \in \bcU^t ~(\text{not~} \bcU^t_i) , ~ i\in [n]\right\}, \quad t\geq 1,
\end{eqnarray}
with $a = \min \left( \frac{1}{3(T+1)}, \frac{1}{ 2(1+\sqrt{3T})R}   \right)$.
And $G^{\leq t} = \bigcap_{j=0}^t \bcG^j$.

We first show that if $\bcG^{\leq T}$ happens, then \eqref{lowerend} holds for all $0\leq t\leq T$.      For  $0\leq t\leq T$, and  $i \in [n]$, if $\bcU^t_i = \varnothing$, \eqref{lowerend} is right; otherwise for any $\u \in \bcU^t_i$, we have
\begin{eqnarray}\label{lowerbound uy}
&&\left| \left\langle \u, \y^t_i    \right\rangle \right|\notag\\
&\leq&\left| \left\langle\u, \cP^{(t-1)\bot}_i \y^t_i \right\rangle          \right| + \left| \left\langle\u, \cP^{(t-1)}_i \y^t_i \right\rangle          \right|\notag\\
&\leq& a \| \cP_i^{(t-1)\bot} \y^t_i \|+  \left| \left\langle\u, \cP^{t-1}_i \y^t_i \right\rangle          \right|\leq   a R+  R\left\| \cP^{t-1}_i \uu        \right\|,
\end{eqnarray}
where in the last inequality, we use $ \| \cP^{(t-1)\bot}_i \y^t_i \|\leq \|  \y^{(t-1)}_i \|\leq R$.

If $t=0$, we have $\cP_i^{t-1} = \mathbf{0}_{d/n\times d/n}$, then $\left\| \cP_i^{t-1} \uu         \right\| = 0$, so \eqref{lowerend} holds. When $t\geq 1$, suppose at $t-1$,  $\bcG^{\leq t}$ happens then  \eqref{lowerend} holds for all $0$ to $t-1$. Then we need to prove that $\|\cP^{t-1}_i \uu  \|\leq b = \sqrt{3T}a$ with $\uu\in \bcU^t_i$ and $i\in [n]$. Instead,  we prove   a stronger results: $\|\cP^{t-1}_i \uu  \|\leq b = \sqrt{3T}a$ with all   $\uu\in \bcU^t$ and $i\in [n]$. Again,   When $t = 0$, we have $\|\cP^{t-1}_i \uu  \|= 0$, so it is right, when $t\geq 1$,
by  Graham-Schmidt procedure on  $\y^0_{i},  \mathbf{b}_{i_0, \min(I^0_{i^0}, M)}, \cdots,  \y^{t-1}_{i}, \mathbf{b}_{i_{t-1}, \min(I^{t-1}_{i^{t-1}}, M)}$, we have
\begin{eqnarray}\label{lowerbound3}
\left\| \cP^{t-1}_i \uu        \right\|^2 = \sum_{z=0}^{t-1}\left| \left\langle\frac{ \cP^{(z-1)\bot}_i \y^z_i}{\|\cP^{(z-1)\bot}_i \y^z_i \|}, \u     \right\rangle   \right|^2+ \sum_{z=0, ~I^{z}_{i^{z}}\leq M}^{t-1}\left| \left\langle\frac{ \hat{\cP}^{(z-1)\bot}_i \mathbf{b}_{i_{z}, I^z_{i^z}}}{\|\hat{\cP}^{(z-1)\bot}_i \mathbf{b}_{i_{z}, I^z_{i^z}} \|}, \u     \right\rangle   \right|^2,
\end{eqnarray}
where
 $$ \hat{\cP}^{(z-1)}_i  = \cP^{(z-1)}_i + \frac{\left(\cP^{(z-1)\bot}_i \y^z_i\right)\left(\cP^{(z-1)\bot}_i \y^z_i\right)^\bT}{\left\|\cP^{(z-1)\bot}_i \y^z_i\right\|^2}.     $$
Using $\mathbf{b}_{i_{z}, I^z_{i^z}} \bot \uu$ for all $\uu \in \bcU^t$, we have
 \begin{eqnarray}\label{lowerbound2}
&&\left|\left\langle\hat{\cP}^{(z-1)\bot}_i \mathbf{b}_{i_{z}, I_{i^z}^z}, \u \right\rangle\right|\\
&=&\left|0-\left\langle\hat{\cP}^{(z-1)}_i \mathbf{b}_{i_{z}, I_{i^z}^z}, \u \right\rangle\right|\notag\\
&\leq& \left|\left\langle\cP^{(z-1)}_i \mathbf{b}_{i_{z}, I^z_{i^z}}, \u \right\rangle\right|+ \left|  \left\langle \frac{\cP^{(z-1)\bot}_i   \y^z_i}{\|\cP^{(z-1)\bot}_i   \y^z_i \|}, \mathbf{b}_{i_{z}, I^z_{i^z}} \right\rangle \left\langle  \frac{\cP^{(z-1)\bot}_i   \y^z_i}{\|\cP^{(z-1)\bot}_i   \y^z_i \|}, \uu\right\rangle      \right|.\notag
 \end{eqnarray}

For the first term in the right hand of \eqref{lowerbound2}, by induction, we have
\begin{eqnarray}
\left|\left\langle\cP^{(z-1)}_i \mathbf{b}_{i_{z}, I^z_{i^z}}, \u \right\rangle\right| = \left|\left\langle\cP^{(z-1)}_i \mathbf{b}_{i_{z}, I^z_{i^z}},\cP^{(z-1)}_i   \u \right\rangle\right|\leq b^2.
\end{eqnarray}
For the second  term  in the right hand of \eqref{lowerbound2}, by assumption \eqref{lowerbound G}, we have
\begin{eqnarray}
 \left|  \left\langle \frac{\cP^{(z-1)\bot}_i   \y^z_i}{\|\cP^{(z-1)\bot}_i   \y^z_i \|},  \mathbf{b}_{i_{z}, I^z_{i^z}}\right\rangle \left\langle  \frac{\cP^{(z-1)\bot}_i   \y^z_i}{\|\cP^{(z-1)\bot}_i   \y^z_i \|}, \uu\right\rangle      \right|\leq a^2.
\end{eqnarray}

Also,  we have
 \begin{eqnarray}\label{lowerbound4}
&&\left\|\hat{\cP}^{(z-1)\bot}_i \mathbf{b}_{i_{z}, I^z_{i^z}} \right\|^2\\
&=& \| \mathbf{b}_{i_{z}, I^z_{i^z}}\|^2 - \left\|\hat{\cP}^{(z-1)}_i\mathbf{b}_{i_{z}, I^z_{i^z}} \right\|^2\notag\\
&=&  \| \mathbf{b}_{i_{z}, I^z_{i^z}}\|^2 - \left\|\cP^{(z-1)}_i \mathbf{b}_{i_{z}, I^z_{i^z}} \right\|^2 -\left|  \left\langle \frac{\cP^{(z-1)\bot}_i   \y^z_i}{\|\cP^{(z-1)\bot}_i   \y^z_i \|},  \mathbf{b}_{i_{z}, I^z_{i^z}}\right\rangle       \right|^2\notag\\
&\geq&1 - b^2 - a^2.\notag
\end{eqnarray}
 Substituting \eqref{lowerbound2}  and \eqref{lowerbound4} into \eqref{lowerbound3}, for all $\u\in \bcU^t$, we have
 \begin{eqnarray}
\left\| \cP^{t-1}_i \uu        \right\|^2 &\leq& t a^2 + t \frac{ (a^2+b^2)^2}{ 1- (a^2 +b^2)}\notag\\
&\overset{a^2 +b^2 \leq  (3T+1)a^2 \leq a}{\leq}& T a^2+ T \frac{a^2}{ 1-a}\overset{a\leq 1/2}\leq 3T a^2 = b^2.
 \end{eqnarray}

 Thus for \eqref{lowerbound uy},  $t\geq 1$, because $\u \in \bcU^t_i\subseteq \bcU^t $,  we have
 \begin{eqnarray}
\left| \left\langle \u, \y^t_i    \right\rangle \right|\leq (a+b)R \overset{a\leq \frac{1}{ 2(1+\sqrt{3T})R}}{\leq}\leq \frac{1}{2}.
 \end{eqnarray}
 This shows that  if $\bcG^{\leq T}$ happens,  \eqref{lowerend} holds for all $0\leq t\leq T$.  Then we prove that $\PP(\bcG^{\leq T})\geq 1- p$. We have
 \begin{eqnarray}
\PP\left((\bcG^{\leq T})^c\right) &=& \sum_{t=0}^T \PP\left((\bcG^{\leq t})^c\mid \bcG^{< t} \right) .
  \end{eqnarray}
We give the following definition:
\begin{enumerate}
	\item  Denote $\hat{i}^t$ be the sequence of $i_{0:t-1}$.  Let $\hat{\cS}^t$ be the  set that  contains all possible ways of $\hat{i}^t$ ($|\hat{\cS}^t |\leq n^t $).
	\item Let $\tilde{\UU}^j_{\hat{i}^t} = [\mathbf{b}_{j, 1}, \cdots,  \mathbf{b}_{j, \min(M,I^{t-1}_j)}]$ with $j\in [n]$, and $\tilde{\UU}_{\hat{i}^t} =[\tilde{\UU}^1_{\hat{i}^t},\cdots,\tilde{\UU}^n_{\hat{i}^t}]$. $\tilde{\UU}_{\hat{i}^t}$ is  analogous to $\tilde{\bcU^t}$, but is a matrix.
	\item Let $\UU^j_{\hat{i}^t} = [ \mathbf{b}_{j, \min(M,I^t_j)};\cdots;\mathbf{b}_{j, M}] $ with $j\in [n]$, and $\UU_{\hat{i}^t} =[\UU^1_{\hat{i}^t},\cdots,\UU^n_{\hat{i}^t}]$. $\UU_{\hat{i}^t}$ is  analogous to $\bcU^t$, but is a matrix.  Let $\bar{\UU} = [\tilde{\UU}_{\hat{i}^t}, \UU_{\hat{i}^t}  ] $.
	\end{enumerate}
We have that
 \begin{eqnarray}
 &&\PP\left((\bcG^{\leq t})^c\mid \bcG^{< t} \right)\\
&=&\sum_{\hat{i}^t_0\in  \hat{\cS}^t} \E_{\bxi, \UU_{\hat{i}^t_0}} \left(\PP\left((\bcG^{\leq t})^c\mid \bcG^{< t}, \hat{i}^t=\hat{i}^t_0, \bxi,   \UU_{\hat{i}^t_0} \right)\PP\left(\hat{i}^t = \hat{i}^t_0 \mid  \bcG^{< t},   \bxi,   \UU_{\hat{i}^t_0} \right)\right).\notag
 \end{eqnarray}
For  $\sum_{\hat{i}^t_0\in  \hat{\cS}^t} \E_{\bxi, \UU_{\hat{i}^t_0}}\PP\left(\hat{i}^t = \hat{i}^t_0 \mid  \bcG^{< t},   \bxi,   \UU_{\hat{i}^t_0} \right) =  \sum_{\hat{i}^t_0\in  \hat{\cS}^t} \PP\left(\hat{i}^t = \hat{i}^t_0\mid  \bcG^{< t} \right) = 1$,
 in the rest, we show that the probability $\PP\left((\bcG^{\leq t})^c\mid \bcG^{< t}, \hat{i}^t= \hat{i}^t_0, \bxi = \bxi_0,  \tilde{\UU}_{\hat{i}^t_0} = \tilde{\UU}_0, \right)$ for all  $\xi_0 ,\tilde{\UU}_0$ is small. By union bound, we have
\begin{eqnarray}
&&\PP\left((\bcG^{\leq t})^c\mid \bcG^{< t}, \hat{i}^t= \hat{i}^t_0, \bxi = \bxi_0,   \tilde{\UU}_{\hat{i}^t_0} = \tilde{\UU}_0 \right)\\
&\leq& \sum_{i=1}^n\sum_{\uu\in \bcU^t}\PP \left(\left\langle\uu, \cP_i^{(t-1)\bot}\y^t_i \right\rangle\geq a \| \cP_i^{(t-1)\bot}\y^t_i\|\mid \bcG^{< t}, \hat{i}^t= \hat{i}^t_0, \bxi = \bxi_0,   \tilde{\UU}_{\hat{i}^t_0} = \tilde{\UU}_0\right) \notag.
\end{eqnarray}
Note that $\hat{i}^t_0$ is a constant. Because given $\bxi$ and $\tilde{\UU}_{\hat{i}^t_0}$, under $G^{\leq t}$,  both  $\cP_i^{(t-1)}$ and $\y^t_i$ are known.  We prove
 	\begin{eqnarray}
\!\!\!\! \!\!\!\! \!\!\!\! 	
\PP\left( \UU_{\hat{i}^t_0} =  \UU_0\mid \bcG^{< t}, \hat{i}^t= \hat{i}^t_0, \bxi = \bxi_0,   \tilde{\UU}_{\hat{i}^t_0} = \tilde{\UU}_0\right) =  	\PP\left(\UU_{\hat{i}^t_0} =  \Z_i\UU_0\mid \bcG^{< t}, \hat{i}^t=\hat{i}^t_0, \bxi = \bxi_0,   \tilde{\UU}_{\hat{i}^t_0} = \tilde{\UU}_0\right),
 	\end{eqnarray}
 	where $\Z_i\in \RR^{d/n\times d/n}$, $\Z_i^\bT\Z_i = \I_d$, and $\Z_i \u  = \u = \Z^\bT_i\u$ for all $\u\in \bcV^{t-1}_i $. In  this way,   $\frac{\cP_i^{(t-1)\bot}\u}{\|\cP_i^{(t-1)\bot}\u\|}$  has uniformed distribution on the unit space.  To prove it, we have
 \begin{eqnarray}
 	&&\PP\left( \UU_{\hat{i}^t_0} = \UU_0 \mid \bcG^{< t}, \hat{i}^t=\hat{i}^t_0, \bxi = \bxi_0,   \tilde{\UU}_{\hat{i}^t_0} = \tilde{\UU}_0\right)\notag\\
 	 &=&  \frac{\PP(\UU_{\hat{i}^t_0} = \UU_0, \bcG^{< t}, \hat{i}^t=\hat{i}^t_0, \bxi = \bxi_0,   \tilde{\UU}_{\hat{i}^t_0} = \tilde{\UU}_0)}{\PP( \bcG^{< t}, \hat{i}^t=\hat{i}^t_0,  \bxi = \bxi_0,   \tilde{\UU}_{\hat{i}^t_0} = \tilde{\UU}_0)}\notag\\
 	 &=& \frac{\PP( \bcG^{< t}, \hat{i}^t=\hat{i}^t_0 \mid \bxi = \bxi_0, \UU_{\hat{i}^t_0} =\UU_0,  \tilde{\UU}_{\hat{i}^t_0} = \tilde{\UU}_0) p(\bxi = \bxi_0, \UU_{\hat{i}^t_0} =\UU_0,  \tilde{\UU}_{\hat{i}^t_0} = \tilde{\UU}_0)}{\PP( \bcG^{< t}, \hat{i}^t=\hat{i}^t_0, \bxi = \bxi_0,   \tilde{\UU}_{\hat{i}^t_0} = \tilde{\UU})},
 \end{eqnarray}
 And
  \begin{eqnarray}
 	&&\PP\left(\UU_{\hat{i}^t_0} = \Z_i \UU_0 \mid \bcG^{< t}, \hat{i}^t=\hat{i}^t_0, \bxi = \bxi_0,   \tilde{\UU}_{\hat{i}_0} = \tilde{\UU}_0\right)\notag\\
 	&=& \frac{\PP( \bcG^{< t}, \hat{i}^t=\hat{i}^t_0 \mid \bxi = \bxi_0, \UU_{\hat{i}^t_0} =\UU_0,  \tilde{\UU}_{\hat{i}^t_0} = \Z_i \tilde{\UU}_0) p(\bxi = \bxi_0, \UU_{\hat{i}^t_0} =\Z_i \UU_0,  \tilde{\UU}_{\hat{i}^t_0} = \tilde{\UU}_0)}{\PP( \bcG^{< t}, \hat{i}^t=\hat{i}^t_0, \bxi = \bxi_0,   \tilde{\UU}_{\hat{i}^t_0} = \tilde{\UU}_0)}
  \end{eqnarray}
For $\bxi$ and $\bar{\UU}$ are independent. And $p(\bar{\UU}) = p(\Z_i \bar{\UU})$, we have $p(\bxi = \bxi_0, \UU_{\hat{i}^t_0} =\UU_0,  \tilde{\UU}_{\hat{i}^t_0} = \tilde{\UU}_0) =p(\bxi = \bxi_0, \UU_{\hat{i}^t_0} =\Z_i \UU_0,  \tilde{\UU}_{\hat{i}^t_0} = \tilde{\UU}_0) $. Then
  we  prove that if $ \bcG^{< t}$ and $\hat{i}^t=\hat{i}^t_0$ happens under  $\UU_{\hat{i}^t_0} =\UU_0, \bxi = \bxi_0,   \tilde{\UU}_{\hat{i}^t_0} = \tilde{\UU}_0$, if and only if $ \bcG^{< t}$ and $\hat{i}^t=\hat{i}^t_0$ happen under  $\UU_{\hat{i}^t_0} =\Z_i \UU_0, \bxi = \bxi_0,   \tilde{\UU}_{\hat{i}^t_0} = \tilde{\UU}_0$.

  Suppose at iteration $l-1$ with $l\leq t$, we have the result. At iteration $l$, suppose  $ \bcG^{<l}$ and $\hat{i}^l = \hat{i}^l_0$ happen, given    $\UU_{\hat{i}^t_0} =\UU_0, \bxi = \bxi_0,   \tilde{\UU}_{\hat{i}^t_0} = \tilde{\UU}_0$. Let $\x'$ and $(\hat{i}')^j$ are generated by  $\bxi = \bxi_0,  \UU_{\hat{i}^t_0} =\Z_i \UU_0, \tilde{\UU}_{\hat{i}^t_0} = \tilde{\UU}_0$. Because $\bcG^{<l}$ happens, thus at each iteration, we can only recover one index until $l-1$.  Then $(\x')^j=\x^j$ and $(\hat{i}')^j = \hat{i}^j$. with $j\leq l$.  By induction, we only need to prove that $\bcG^{l-1'}$ will happen.   Let $\uu \in \bcU^{l-1}$, and $i\in [n]$, we have
 \begin{eqnarray}
  \left|\left\langle \Z_i\u, \frac{\cP^{(l-2) \bot}_i \y^{l-1}_i    }{\|  \cP^{(l-2) \bot}_i\y^{l-1}_i\|}\right\rangle\right|=  \left|\left\langle \u, \Z_i^\bT\frac{\cP^{(l-2) \bot}_i \y^{l-1}_i    }{\|  \cP^{(l-2) \bot}_i \y_i^{l-1}\|}\right\rangle\right|\overset{a}{=} \left|\left\langle \u, \frac{\cP^{(l-2) \bot}_i \y^{l-1}_i    }{\|  \cP^{(l-2) \bot}_i \y^{l-1}_i\|}\right\rangle\right|,
 \end{eqnarray}
where in $\overset{a}=$, we use $\cP^{(l-2) \bot}_i \y^{l-1}_i$ is in the span of $\bcV^l_i\subseteq \bcV^{t-1}_i $.  This shows that  if $ \bcG^{< t}$ and $\hat{i}^t=\hat{i}^t_0$ happen under  $\UU_{\hat{i}^t_0} =\UU_0, \bxi = \bxi_0,   \tilde{\UU}_{\hat{i}^t_0} = \tilde{\UU}_0$, then $\bcG^{< t}$ and $\hat{i}^t=\hat{i}^t$ happen under  $\UU_{\hat{i}^t_0} =\Z_i \UU_0, \bxi = \bxi_0,   \tilde{\UU}_{\hat{i}^t_0} = \tilde{\UU}_0$. In the same way, we can prove the necessity.
Thus for any $\uu\in \UU^t$, if $\| \cP_i^{(t-1)\bot}\y^t_i\|\neq 0$ (otherwise, $ \left|\left\langle \u, \cP^{(t-1) \bot}_i \y^t_i    \right\rangle\right|\leq  a \|  \cP^{(t-1) \bot}_i \y_i^t\|$ holds), we have
\begin{eqnarray}
&&\PP \left(\left\langle\uu, \frac{\cP_i^{(t-1)\bot}\y^t_i }{ \| \cP_i^{(t-1)\bot}\y^t_i\|}\right\rangle\geq a \mid \bcG^{< t}, \hat{i}^t= \hat{i}^t_0, \bxi = \bxi_0,   \tilde{\UU}_{\hat{i}^t_0} = \tilde{\UU}_0\right)\notag\\
&\overset{a}\leq&\PP \left(\left\langle\frac{\cP_i^{(t-1)\bot}\uu }{ \| \cP_i^{(t-1)\bot}\uu\|}, \frac{\cP_i^{(t-1)\bot}\y^t_i }{ \| \cP_i^{(t-1)\bot}\y^t_i\|}\right\rangle\geq a \mid \bcG^{< t}, \hat{i}^t= \hat{i}^t_0, \bxi = \bxi_0,   \tilde{\UU}_{\hat{i}^t_0} = \tilde{\UU}_0\right)\notag\\
&\overset{b}{\leq}& 2e^{\frac{-a^2 (d/n -2T)}{2}},
\end{eqnarray}
where in $\overset{a}{\leq}$, we use $\| \cP_i^{(t-1)\bot}\uu\|\leq 1$; and in $\overset{b}\leq$, we use $\frac{\cP_i^{(t-1)\bot}\y^t_i }{ \| \cP_i^{(t-1)\bot}\y^t_i\|}$ is a known unit vector and $\frac{\cP_i^{(t-1)\bot}\u}{\|\cP_i^{(t-1)\bot}\u\|}$  has uniformed distribution on the unit space. Then by union bound, we have  $\PP\left(\left(\bcG^{\leq t}\right)^c\mid \bcG^{< t} \right) \leq 2(n^2M)e^{\frac{-a^2 (d/n -2T)}{2}} $.
  Thus
\begin{eqnarray}
\PP\left( \left(\bcG^{\leq T}\right)^c\right) &\leq&  2(T+1)n^2M\exp\left(\frac{-a^2 (d/n -2T)}{2}\right)\notag\\
& \overset{T = \frac{nM}{2}}{\leq} & 2(nM)(n^2M)\exp\left(\frac{-a^2 (d/n -2T)}{2}\right).https://www.overleaf.com/project/5df9fe6b72d0630001ce4cd4
\end{eqnarray}
Then by setting
\begin{eqnarray}
d/n &\geq& 2\max(9n^2M^2,12nMR^2)\log (\frac{2n^3M^2}{p}) + nM \notag\\
 &\geq& 2\max(9 (T+1)^2,2 (2\sqrt{3T})^2R^2)\log (\frac{2n^3M^2}{p}) + 2T\notag\\
 &\geq& 2\max(9 (T+1)^2,2 (1+\sqrt{3T})^2R^2)\log (\frac{2n^3M^2}{p}) + 2T\notag\\
 &\geq& \frac{2}{a^2}\log (\frac{2n^3M^2}{p}) + 2T,
\end{eqnarray}
we have $\PP\left(\left(\bcG^{\leq 	T}\right)^c\right)\leq p$. This completes the proof.
\end{proof}

\subsection{Proof of Assumptions~\ref{assum3} and \ref{assum4}}
Following the proof in \cite{wang2017stochastic} we prove that $H_k$ generated by Algorithm \ref{SdLBFGS} satisfies assumptions \ref{assum3} and \ref{assum4}. For convenience, we restate the formulations have already been stated in our manuscript. First, we prove that $H_k$ generated by SdLBFGS satisfies assumptions \ref{assum3} and then we prove that $H_k$ generated by the two-loop SdLBFGS also satisfies assumptions \ref{assum3}.

At current iteration $k$ (refers to iteration $k$ in Algorithms \ref{SPIDER-SQN}) to \ref{Spider-SQNM-online}, the stochastic gradient difference is defined as
\be\label{y-k}
\bar{y}_{k-1} : = v_{k} - v_{k-1} = \nabla f_{\xi_k} (x_k) - \nabla f_{\xi_k} (x_{k-1}).
\ee
The iterate difference is still defined as $s_{k-1}=x_{k}-x_{k-1}$. We introduce $\hat{y}_{k-1}$ as
\be\label{y-k-ini}
\hat{y}_{k-1} = \theta_{k-1} \bar{y}_{k-1} + (1-\theta_{k-1})H_{{k-1},0}^{-1}s_{k-1},
\ee
where
\be \label{theta-k}
\theta_{k}=\left\{\begin{array}{ll}{\frac{0.75 s_{k-1}^{\top} H_{k, 0}^{-1} s_{k-1}}{s_{k-1}^{\top} H_{k, 0}^{-1} s_{k-1}-s_{k-1}^{\top} \bar{y}_{k-1}},} & {\text { if } s_{k-1}^{\top} \bar{y}_{k-1}<0.25 s_{k-1}^{\top} H_{k, 0}^{-1} s_{k-1}} \\ {1,} & {\text { otherwise, }}\end{array}\right.
\ee
Then we prove that there is $s_{k-1}^{\top} \hat{y}_{k-1} \geq0.25 s_{k-1}^{\top} H_{k, 0}^{-1} s_{k-1}$
\begin{lemma}\label{lemmageq}
Given $\hat{y}_{k-1}$ defined in \eqref{y-k-ini}, there is $s_{k-1}^{\top} \hat{y}_{k-1} \geq0.25 s_{k-1}^{\top} H_{k, 0}^{-1} s_{k-1}$. Moreover, if $H_{k,0}\succ0$, then $H_{k,j}\succ 0$, $j=1, \ldots, m$.
\end{lemma}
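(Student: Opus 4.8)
The plan is to handle the two assertions separately, since they rely on quite different mechanisms. For the inequality $s_{k-1}^\top \hat{y}_{k-1} \geq 0.25\, s_{k-1}^\top H_{k,0}^{-1} s_{k-1}$, I would abbreviate $\sigma_{k-1} := s_{k-1}^\top H_{k,0}^{-1} s_{k-1}$ and $a := s_{k-1}^\top \bar{y}_{k-1}$, and simply substitute the definition \eqref{y-k-ini} of $\hat{y}_{k-1}$ to obtain the scalar identity
\begin{align}
s_{k-1}^\top \hat{y}_{k-1} = \theta_{k-1}\, a + (1-\theta_{k-1})\,\sigma_{k-1}.
\end{align}
The argument then splits according to the two branches in \eqref{theta-k}. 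On the branch $a \geq 0.25\,\sigma_{k-1}$ one has $\theta_{k-1}=1$, so the right-hand side equals $a \geq 0.25\,\sigma_{k-1}$. On the branch $a < 0.25\,\sigma_{k-1}$ one has $\theta_{k-1} = 0.75\,\sigma_{k-1}/(\sigma_{k-1}-a)$, and substituting then cancelling the factor $(\sigma_{k-1}-a)$ gives exactly $s_{k-1}^\top \hat{y}_{k-1} = 0.25\,\sigma_{k-1}$, so the bound holds with equality. This step is pure one-line algebra; the only thing to note is that $\sigma_{k-1}-a>0$ on this branch, so $\theta_{k-1}$ is well defined and positive.

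For the second claim, the key observation I would record first is that the damping just established yields the \emph{curvature condition}: since $H_{k,0}\succ 0$ forces $H_{k,0}^{-1}\succ 0$ and hence $\sigma_{k-1}>0$ (for $s_{k-1}\neq 0$), we get $s_{k-1}^\top \hat{y}_{k-1} \geq 0.25\,\sigma_{k-1} > 0$, so the stored scalars $\rho_j = (s^\top \hat{y})^{-1}$ are strictly positive. I would then argue by induction on $j$ along the L-BFGS recursion, writing each update in the compact form
\begin{align}
H_{k,j+1} = V_j^\top H_{k,j} V_j + \rho_j\, s\, s^\top, \qquad V_j := I - \rho_j\, \hat{y}\, s^\top,
\end{align}
where $(s,\hat{y},\rho_j)$ denotes the stored triple used at the $j$-th update. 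For any $z\neq 0$ this gives $z^\top H_{k,j+1} z = (V_j z)^\top H_{k,j}(V_j z) + \rho_j (s^\top z)^2$, a sum of two nonnegative terms once $H_{k,j}\succ 0$ and $\rho_j>0$.

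To close the induction step I would show the two summands cannot vanish simultaneously. If $z^\top H_{k,j+1}z=0$, then both nonnegative summands are zero: positive definiteness of $H_{k,j}$ forces $V_j z=0$, while the second term forces $s^\top z = 0$. But then $V_j z = z - \rho_j \hat{y}\,(s^\top z) = z$, so $z=0$, a contradiction. Hence $H_{k,j+1}\succ 0$, and with the base case $H_{k,0}\succ 0$ supplied by hypothesis the induction propagates through $j=1,\ldots,m$. I do not expect any analytic obstacle here: the statement is essentially the classical positive-definiteness preservation of damped BFGS, and the only points demanding care are the bookkeeping that ties the curvature condition from the first part to the $\rho_j>0$ feeding the recursion, and keeping the notation for $H_{k,0}$ consistent throughout.
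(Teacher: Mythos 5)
Your proof is correct and follows essentially the same route as the paper: the same substitution and case split on the two branches of $\theta_{k-1}$ for the curvature bound (with equality $0.25\,s_{k-1}^\top H_{k,0}^{-1}s_{k-1}$ on the damped branch), and the same induction along the recursion $H_{k,i} = (I-\rho_j s_j\hat{y}_j^\top)H_{k,i-1}(I-\rho_j\hat{y}_j s_j^\top) + \rho_j s_j s_j^\top$ for positive definiteness. If anything, your handling of the induction step is slightly more careful than the paper's, which asserts $z^\top H_{k,i}z>0$ outright without checking that the two nonnegative summands cannot vanish simultaneously; your contradiction argument (if $V_j z=0$ and $s_j^\top z=0$ then $V_j z = z$, hence $z=0$) supplies exactly that missing justification.
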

\begin{proof}
From \eqref{y-k-ini} and \eqref{theta-k} we have that
\begin{align*}
s_{k-1}^\top \hat{y}_{k-1} = & \, {\theta}_{k}(s_{k-1}^\top \bar{y}_{k-1} - s_{k-1}^\top H_{k,0}^{-1}s_{k-1}) + s_{k-1}^\top H_{k,0}^{-1}s_{k-1}\\
               = & \, \begin{cases}
               0.25s_{k-1}^\top H_{k,0}^{-1}s_{k-1},\quad & \mbox{if }s_{k-1}^\top \bar{y}_{k-1} < 0.25 s_{k-1}^\top H_{k,0}^{-1}s_{k-1},\\
               s_{k-1}^\top \bar{y}_{k-1},\quad & \mbox{otherwise},
               \end{cases}
\end{align*}
which implies $s_{k-1}^\top\hat{y}_{k-1}\geq 0.25s_{k-1}^\top H_{k,0}s_{k-1}$. Therefore, if $H_{k,0}\succ 0$, there is  ${s}_{k-1}^\top\hat{y}_{k-1} >0$. Using $s_j$ and $\hat{y}_j$, $j=k-m, \ldots, k-1$, the formula of SdLBFGS is defined as
\be\label{mod-L-BFGS}
H_{k,i} = (I-\rho_{j} s_{j} \hat{y}_{j}^\top)H_{k,i-1}(I-\rho_{j} \hat{y}_{j} s_{j}^\top) + \rho_{j} s_{j} s_{j}^\top, \quad j=k-(m-i+1); \, i=1,\ldots,m,
\ee
where $\rho_j = ({s}_{j}^\top\hat{y}_{j})^{-1}$. Note that when $k <m$, we use $s_j$ and $\hat{y}_j$, $j=1,\ldots,k$ to perform SdLBFGS updates. As a result, for $H_{k,i}$ defined in \eqref{mod-L-BFGS} and any nonzero vector $z\in\RR^d$, and given $H_{k-1}\succ 0$ we have
\[
z^\top H_{k,i} z = z^\top(I-\rho_{j} s_{j} \hat{y}_{j}^\top) H_{k,i-1} (I -\rho_{j} \hat{y}_{j} s_{j}^\top)z + \rho_{j} (s_{j}^\top z)^2>0, \quad j=k-(m-i+1); \, i=1,\ldots,m,
\]
where $(z^\top(I-\rho_{j} s_{j} \hat{y}_{j}^\top))^\top = (I -\rho_{j} \hat{y}_{j} s_{j}^\top)z$. Through above analysis we have that given $H_{k,0}\succ 0$, $H_{k,j}\succ 0$, $j=1,\ldots,m$. This completes the proof.
\end{proof}

Note that, above proof relies on the assumption that $H_{k,0}\succ 0$ thus we turn to the discussion of choosing $H_{k,0}$.  In this paper we set
\be\label{gama-k}
H_{k,0}=\gamma_k^{-1} I_{d\times d}, \quad \mbox{ where } \gamma_k = \max\left\{
 \frac{\bar{y}_{k-1}^\top \bar{y}_{k-1}}{s_{k-1}^\top \bar{y}_{k-1}} , \delta\right\}\geq \delta.
\ee
Given $\delta>0$ it is obvious that $H_{k,0}\succ 0$.

To prove that $H_k=H_{k,m}$ (in Algorithm \ref{SdLBFGS}, there is $H_kv_k=H_{k,m}v_k=\bar{v}_m$) generated by \eqref{mod-L-BFGS}-\eqref{gama-k} satisfies assumptions \ref{assum3} and \ref{assum4}, we need use Assumption \ref{assum6}.
In the following analysis, we just focus on the finite-sum case, and that of online case is similar. Note that Assumption \ref{assum6} is equivalent to requiring that $-\kappa I\preceq \nabla^2 f_i(x) \preceq\kappa I$ for $i=1,\ldots,n$.
The following lemma shows that the eigenvalues of $H_k$ are bounded below away from zero under Assumption \ref{assum6}.
\begin{lemma}\label{low}
Suppose that Assumption \ref{assum6} holds. Given $H_{k,0}$ defined in \eqref{gama-k}, suppose that $H_k=H_{k,m}$ is updated through the SdLBFGS formula \eqref{mod-L-BFGS}. Then all the eigenvalues of $H_k$ satisfy
\be\label{low-H}
\|H_k\| \ge \left(\frac{4m\kappa^2}{\delta} +(4m+1)(\kappa+\delta)\right)^{-1},
\ee
\end{lemma}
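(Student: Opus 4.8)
The plan is to prove the statement in its equivalent inverse form. Since $\lambda_{\min}(H_k) = 1/\lambda_{\max}(H_k^{-1})$ and $\|H_k\| = \lambda_{\max}(H_k)\ge \lambda_{\min}(H_k)$, it suffices to upper bound the largest eigenvalue of the Hessian-form matrix $B_k := H_k^{-1} = B_{k,m}$ by $\tfrac{4m\kappa^2}{\delta} + (4m+1)(\kappa+\delta)$. First I would rewrite the inverse-Hessian recursion \eqref{mod-L-BFGS} in its dual Hessian form (standard BFGS duality),
\[ B_{k,i} = B_{k,i-1} - \frac{B_{k,i-1}s_j s_j^\top B_{k,i-1}}{s_j^\top B_{k,i-1}s_j} + \frac{\hat{y}_j \hat{y}_j^\top}{s_j^\top \hat{y}_j}, \quad j = k-(m-i+1). \]
The middle term is positive semidefinite and is subtracted, so dropping it gives $B_{k,i} \preceq B_{k,i-1} + \frac{\hat{y}_j \hat{y}_j^\top}{s_j^\top\hat{y}_j}$, and therefore the scalar recursion $\|B_{k,i}\| \le \|B_{k,i-1}\| + \frac{\|\hat{y}_j\|^2}{s_j^\top \hat{y}_j}$. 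Unrolling from $B_{k,0} = \gamma_k I$ across the at most $m$ updates yields $\|B_k\| \le \gamma_k + \sum_{i=1}^{m} \frac{\|\hat{y}_{j(i)}\|^2}{s_{j(i)}^\top \hat{y}_{j(i)}}$, so the whole argument reduces to bounding the initial scale $\gamma_k$ and each curvature ratio.

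For each ratio I would assemble three ingredients. First, Lemma \ref{lemmageq} supplies the denominator bound $s_j^\top \hat{y}_j \ge 0.25\, s_j^\top H_{j+1,0}^{-1} s_j = 0.25\,\gamma_{j+1}\|s_j\|^2$. Second, writing $\bar{y}_j = \big(\int_0^1 \nabla^2 f_{\xi}(\cdot)\,dt\big) s_j$ and invoking Assumption \ref{assum6} gives $\|\bar{y}_j\| \le \kappa\|s_j\|$. Third, since $\hat{y}_j = \theta_j \bar{y}_j + (1-\theta_j)\gamma_{j+1} s_j$ is a convex combination (because $\theta_j \in (0,1]$), convexity of $\|\cdot\|^2$ gives $\|\hat{y}_j\|^2 \le \theta_j\|\bar{y}_j\|^2 + (1-\theta_j)\gamma_{j+1}^2\|s_j\|^2 \le (\kappa^2 + \gamma_{j+1}^2)\|s_j\|^2$. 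Combining the three yields $\frac{\|\hat{y}_j\|^2}{s_j^\top \hat{y}_j} \le \frac{4(\kappa^2 + \gamma_{j+1}^2)}{\gamma_{j+1}} = \frac{4\kappa^2}{\gamma_{j+1}} + 4\gamma_{j+1}$; using $\gamma_{j+1}\ge\delta$ in the first term and $\gamma_{j+1}\le \kappa+\delta$ in the second gives the per-update bound $\frac{4\kappa^2}{\delta} + 4(\kappa+\delta)$. Adding the initial scale $\gamma_k \le \kappa+\delta$ and summing over the $m$ updates produces exactly $\frac{4m\kappa^2}{\delta} + (4m+1)(\kappa+\delta)$, hence $\lambda_{\min}(H_k) \ge \big(\frac{4m\kappa^2}{\delta} + (4m+1)(\kappa+\delta)\big)^{-1}$ and a fortiori the stated bound on $\|H_k\|$.

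The main obstacle is the two-sided control of the scaling $\gamma_k = \max\{\|\bar{y}_{k-1}\|^2/(s_{k-1}^\top \bar{y}_{k-1}),\ \delta\}$. The lower bound $\gamma_k \ge \delta$ is immediate from the definition and is all a companion upper-eigenvalue lemma would need; here, however, I need the upper bound $\gamma_k \le \kappa+\delta$, which is where nonconvexity bites and is the crux of the proof. In a convex regime it is routine: if $0\preceq \tilde{G}\preceq \kappa I$ then $\|\bar{y}\|^2 = s^\top \tilde{G}^2 s \le \kappa\, s^\top \tilde{G} s = \kappa\,(s^\top\bar{y})$, so the ratio is at most $\kappa$ and $\gamma_k \le \max\{\kappa,\delta\}\le \kappa+\delta$. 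When $\tilde{G}$ is indefinite this chain can fail, so I would obtain the upper bound directly from the smoothness in Assumption \ref{assum6}, exploiting that the ratio branch of the $\max$ is taken only when $s_{k-1}^\top\bar{y}_{k-1}>0$, and verify carefully that $\gamma_k \le \kappa+\delta$ still holds. This single step is the one that uses the bounded-Hessian assumption in an essential way; once it is in place, the remainder is the mechanical unrolling and summation described above.
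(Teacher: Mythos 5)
Your plan reproduces the paper's own proof essentially step for step: the paper likewise passes to $B_k=H_k^{-1}$ with $B_{k,0}=\gamma_k I$, drops the subtracted rank-one term to get $\|B_{k,i}\|\le\|B_{k,i-1}\|+\hat{y}_j^\top\hat{y}_j/(s_j^\top\hat{y}_j)$ (its inequality \eqref{tr-B}), bounds each curvature ratio by $\frac{4\kappa^2}{\delta}+4(\kappa+\delta)$ using the denominator bound from Lemma \ref{lemmageq}, the representation \eqref{ybar} with Assumption \ref{assum6}, and the two-sided bound $\delta\le\gamma_{j+1}\le\kappa+\delta$ (its \eqref{proof-lemma-low-1}--\eqref{proof-lemma-low-2}), and then unrolls over the $m$ updates together with $\|B_{k,0}\|=\gamma_k\le\kappa+\delta$. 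Your only deviation, bounding $\|\hat{y}_j\|^2$ by convexity of $\|\cdot\|^2$ instead of expanding the square as in \eqref{proof-lemma-low-1}, is immaterial.

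The step you deferred, however, is a genuine gap, and your suspicion about it is exactly right: the upper bound $\gamma_{j+1}\le\kappa+\delta$ cannot be ``verified carefully'' under Assumption \ref{assum6} alone, because it is false for nonconvex $f$. Writing $\bar{y}_j=\tilde{G}s_j$ with $\tilde{G}=\frac{1}{|\xi_{j+1}|}\sum_{i\in\xi_{j+1}}\overline{\nabla^2 f_i}(x_j,s_j)$, the assumption gives only $-\kappa I\preceq\tilde{G}\preceq\kappa I$, not $\tilde{G}\succeq 0$. Take $\tilde{G}=\mathrm{diag}(\kappa,-\kappa+\varepsilon)$ and $s_j=(1,1)^\top$: then $s_j^\top\bar{y}_j=\varepsilon>0$, so the ratio branch of \eqref{gama-k} is active, yet $\bar{y}_j^\top\bar{y}_j/(s_j^\top\bar{y}_j)\approx 2\kappa^2/\varepsilon$, which exceeds $\kappa+\delta$ for small $\varepsilon$; consequently both your per-update bound (through the term $4\gamma_{j+1}$) and the bound on the initial scale $\gamma_k$ break down. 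You should know that the paper's own proof has exactly the same hole: it simply asserts ``$\delta\le\gamma_{j+1}\le\kappa+\delta$,'' and its parenthetical justification even states an inequality in the wrong direction ($\max\{\delta,\kappa\}\le\gamma_{j+1}$). The argument (inherited from the SdLBFGS analysis it follows) is valid only when the averaged Hessian along each step is positive semidefinite, in which case $s^\top\tilde{G}^2 s\le\kappa\, s^\top\tilde{G}s$ as you note, or if the algorithm is modified to cap the scaling, e.g.\ $\gamma_k=\max\{\min\{\bar{y}_{k-1}^\top\bar{y}_{k-1}/(s_{k-1}^\top\bar{y}_{k-1}),\kappa\},\delta\}$, under which both your unrolling and the paper's go through verbatim. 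So your reconstruction matches the paper's route, and the single step you could not fill in is precisely the step the paper fails to justify.
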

where $\delta$ is a predefined positive constant and $m$ is the memory size.
\begin{proof}
According to Lemma \ref{lemmageq}, $H_{k,i}\succ 0$, $i=1,\ldots,m$. To prove that the eigenvalues of $H_k$ are bounded below away from zero, it suffices to prove that the eigenvalues of $B_k=H_k^{-1}$ are bounded from above. From the formula \eqref{mod-L-BFGS}, $B_k=B_{k,m}$ can be computed recursively as
\[
B_{k,i} = B_{k,i-1} + \frac{\hat{y}_j\hat{y}_j^\top}{s_j^\top \hat{y}_j} - \frac{B_{k,i-1}s_j s_j^\top B_{k,i-1}}{s_j^\top B_{k,i-1}s_j}, \quad j=k-(m-i+1);i=1,\ldots,m,
\]
starting from $B_{k,0}=H_{k,0}^{-1}=\gamma_k I$.
Since $B_{k,0}\succ0$, Lemma \ref{lemmageq} indicates that $B_{k,i}\succ0$ for $i=1,\ldots,m$. Moreover, the following inequalities hold:
{\be\label{tr-B}
\|B_{k,i}\| \le \left\|B_{k,i-1} - \frac{B_{k,i-1}s_j s_j^\top B_{k,i-1}}{s_j^\top B_{k,i-1}s_j}\right\| + \left\|\frac{\hat{y}_j\hat{y}_j^\top}{s_j^\top \hat{y}_j}\right\| \le \|B_{k,i-1}\| + \left\|\frac{\hat{y}_j\hat{y}_j^\top}{s_j^\top \hat{y}_j}\right\| =\|B_{k,i-1}\| + \frac{\hat{y}_j^\top\hat{y}_j}{s_j^\top \hat{y}_j}.
\ee}
From the definition of $\hat{y}_j$ in \eqref{y-k-ini} and the facts that $s_j^\top \hat{y}_j\ge 0.25 s_j^\top B_{j+1,0}s_j$ and $B_{j+1,0}=\gamma_{j+1}I$ from \eqref{gama-k}, we have that for any $j=k-1,\ldots,k-m$
\begin{align}
\frac{\hat{y}_j^\top\hat{y}_j}{s_j^\top \hat{y}_j} \le  \, 4 \frac{\|\theta_j \bar{y}_j + (1-\theta_j)B_{j+1,0}s_j\|^2}{s_j^\top B_{j+1,0}s_j} 
=   \, 4\theta_j^2\frac{\bar{y}_j^\top \bar{y}_j}{\gamma_{j+1}s_j^\top s_j} + 8\theta_j(1-\theta_j)\frac{\bar{y}_j^\top s_j}{s_j^\top s_j} + 4(1-\theta_j)^2 \gamma_{j+1}. \label{proof-lemma-low-1}
\end{align}
Note that from \eqref{y-k} we have
\begin{equation}\label{ybar}
  \bar{y}_j = \frac{1}{|\xi_{j+1}|}\sum_{i\in\xi_{j+1}}(\nabla f_{i} (x_{j+1}) - \nabla f_{i} (x_{j}) = \frac{1}{|\xi_{j+1}|}\left(\sum_{i\in\xi_{j+1}}\overline{\nabla^2 f_i}(x_{j},s_j)\right)s_j,
\end{equation}
where $\overline{\nabla^2 f_i}(x_j,s_j) = \int_0^1\nabla^2f_i(x_j+ts_j)dt$, because $g(x_{j+1})-g(x_j)=\int_0^1\frac{dg}{dt}(x_j+ts_j)dt = \int_0^1\nabla^2 f_i(x_j+ts_j)s_jdt$. Therefore, for any $j=k-1,\ldots,k-m$, from \eqref{proof-lemma-low-1}, and the facts that $0<\theta_j\le1$ and $\delta\le\gamma_{j+1} \le\kappa+\delta$ (according to Eq.~\ref{ybar} and Eq.~\ref{gama-k}, there is $\text{max}\{\delta,\kappa\} \le \gamma_{j+1}$), and the assumption Assumption \ref{assum6} it follows that
\begin{align}
\frac{\hat{y}_j^\top\hat{y}_j}{s_j^\top \hat{y}_j} \le  \, \frac{4\theta_j^2\kappa^2}{\gamma_{j+1}} + 8\theta_j(1-\theta_j)\kappa + 4(1-\theta_j)^2\gamma_{j+1} 
\le \, \frac{4\theta_j^2\kappa^2}{\delta} + 4[(1-\theta_j^2)\kappa + (1-\theta_j)^2\delta] 
\le \, \frac{4\kappa^2}{\delta} + 4(\kappa+\delta). \label{proof-lemma-low-2}
\end{align}
Combining \eqref{tr-B} and \eqref{proof-lemma-low-2} yields
\[
\|B_{k,i}\|\le \|B_{k,i-1}\| + 4\left(\frac{\kappa^2}{\delta}+\kappa+\delta\right).
\]
By induction, we have that
\begin{align*}
\|B_k\|=\|B_{k,m}\| \le \, \|B_{k,0}\| + 4m\left(\frac{\kappa^2}{\delta} + \kappa+\delta\right)
\le \, \frac{4m\kappa^2}{\delta} +(4m+1)(\kappa+\delta),
\end{align*}
which implies \eqref{low-H}.
\end{proof}

We now prove that $H_k$ is uniformly bounded above.
\begin{lemma}\label{upp}
Suppose that Assumption \ref{assum6} holds. Given $H_{k,0}$ defined in \eqref{gama-k}, suppose that $H_k=H_{k,m}$ is updated through formula \eqref{mod-L-BFGS}. Then $H_k$ satisfies
\be\label{up-H}
\|H_k\| \le \left(\frac{\alpha^{2m}-1}{\alpha^{2}-1}\right)\frac{4}{\delta} + \frac{\alpha^{2m}}{\delta},
\ee
where $\alpha = (4\kappa + 5\delta)/\delta$, $\delta$ is a predefined positive constant and $m$ is the memory size.
\end{lemma}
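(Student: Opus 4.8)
The goal is to upper-bound $\|H_k\| = \|H_{k,m}\|$. The plan is to exploit the recursion \eqref{mod-L-BFGS} directly on $H_{k,i}$ rather than on its inverse $B_{k,i}$ (the lower bound in Lemma~\ref{low} worked with $B_k$, but for the upper bound it is cleaner to bound $H_{k,i}$ itself). First I would take the operator norm of both sides of \eqref{mod-L-BFGS}:
\[
H_{k,i} = (I-\rho_{j} s_{j} \hat{y}_{j}^\top)H_{k,i-1}(I-\rho_{j} \hat{y}_{j} s_{j}^\top) + \rho_{j} s_{j} s_{j}^\top,
\]
and apply the triangle inequality together with submultiplicativity to get
\[
\|H_{k,i}\| \le \|I-\rho_{j} s_{j} \hat{y}_{j}^\top\|^2 \,\|H_{k,i-1}\| + \rho_{j}\|s_j\|^2 .
\]
This gives a scalar recursion of the form $\|H_{k,i}\| \le \alpha^2 \|H_{k,i-1}\| + c$, which unrolls geometrically into the stated bound $\left(\frac{\alpha^{2m}-1}{\alpha^{2}-1}\right)\frac{4}{\delta} + \frac{\alpha^{2m}}{\delta}$ after $m$ steps, starting from $\|H_{k,0}\| = \gamma_k^{-1} \le 1/\delta$ by \eqref{gama-k}.

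\textbf{The two quantities to control.}
The two pieces I must bound are the multiplicative factor $\|I-\rho_{j} s_{j} \hat{y}_{j}^\top\|$ and the additive term $\rho_{j}\|s_j\|^2$. For the multiplicative factor, I would write $\|I-\rho_j s_j \hat{y}_j^\top\| \le 1 + \rho_j \|s_j\|\|\hat{y}_j\|$, so everything reduces to estimating $\rho_j = (s_j^\top \hat{y}_j)^{-1}$, $\|s_j\|$, and $\|\hat{y}_j\|$. The curvature condition $s_j^\top \hat{y}_j \ge 0.25\, s_j^\top H_{j+1,0}^{-1} s_j = 0.25\,\gamma_{j+1}\|s_j\|^2 \ge 0.25\,\delta\|s_j\|^2$ from Lemma~\ref{lemmageq} immediately yields $\rho_j \le 4/(\delta\|s_j\|^2)$, which handles the additive term: $\rho_j\|s_j\|^2 \le 4/\delta$, explaining the $4/\delta$ constant in the claimed bound. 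For $\|\hat{y}_j\|$ I would use \eqref{y-k-ini}, the bounds $0<\theta_j\le 1$, $\|\bar{y}_j\|\le \kappa\|s_j\|$ (from the mean-value integral representation \eqref{ybar} and Assumption~\ref{assum6}), and $\|H_{j+1,0}^{-1} s_j\| = \gamma_{j+1}\|s_j\| \le (\kappa+\delta)\|s_j\|$, giving $\|\hat{y}_j\| \le (\kappa + (\kappa+\delta))\|s_j\|$ or a similar linear-in-$\|s_j\|$ estimate. Combining these, $\rho_j\|s_j\|\|\hat{y}_j\| \le \frac{4}{\delta\|s_j\|}\cdot\|s_j\|\cdot(2\kappa+\delta) = \frac{4(2\kappa+\delta)}{\delta}$, so that $\|I-\rho_j s_j\hat{y}_j^\top\| \le 1 + \frac{4(2\kappa+\delta)}{\delta} = \frac{4\kappa+5\delta}{\delta} = \alpha$, which is exactly the definition of $\alpha$ given in the statement.

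\textbf{Assembling the induction and the main obstacle.}
With $\|I-\rho_j s_j\hat{y}_j^\top\|^2 \le \alpha^2$ and $\rho_j\|s_j\|^2 \le 4/\delta$ in hand, the recursion becomes $\|H_{k,i}\| \le \alpha^2\|H_{k,i-1}\| + 4/\delta$. Unrolling from $i=0$ to $i=m$ with base case $\|H_{k,0}\|\le 1/\delta$ gives
\[
\|H_{k,m}\| \le \alpha^{2m}\frac{1}{\delta} + \frac{4}{\delta}\sum_{t=0}^{m-1}\alpha^{2t} = \frac{\alpha^{2m}}{\delta} + \frac{4}{\delta}\cdot\frac{\alpha^{2m}-1}{\alpha^2-1},
\]
which is precisely \eqref{up-H}. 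The main obstacle I anticipate is getting the constant in the $\|\hat{y}_j\|$ estimate exactly right so that $1+\rho_j\|s_j\|\|\hat{y}_j\|$ collapses cleanly to $(4\kappa+5\delta)/\delta$; this requires tracking the $\theta_j$-convex-combination in \eqref{y-k-ini} carefully and using $\theta_j\le 1$ to bound each term by its worst case, together with the sharp curvature lower bound $\rho_j \le 4/(\delta\|s_j\|^2)$. A secondary subtlety is that I must invoke $\gamma_{j+1}\le \kappa+\delta$, which follows from \eqref{ybar} (so that $\bar{y}_j^\top\bar{y}_j/(s_j^\top\bar{y}_j)\le\kappa$ under Assumption~\ref{assum6}) combined with the $\max$ in \eqref{gama-k}; this is exactly the bound already used in the proof of Lemma~\ref{low}, so no new ingredient is needed there.
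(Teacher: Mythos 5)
Your overall strategy is the same as the paper's: reduce the rank-two SdLBFGS update to the scalar recursion $\|H_{k,i}\| \le \alpha^2 \|H_{k,i-1}\| + 4/\delta$ and unroll it $m$ times from $\|H_{k,0}\| = \gamma_k^{-1} \le 1/\delta$. Your route to the factor $\alpha^2$ --- submultiplicativity plus $\|I-\rho_j s_j \hat{y}_j^\top\| \le 1 + \rho_j\|s_j\|\|\hat{y}_j\|$ --- is only cosmetically different from the paper's, which expands the product into four terms and bounds the cross ratio $\|\hat{y}\|\|s\|/(s^\top\hat{y})$ by the geometric mean of $\|\hat{y}\|^2/(s^\top \hat{y}) \le \tfrac{4}{\delta}(\kappa+\delta)^2$ (recycled from the proof of Lemma \ref{low}) and $\|s\|^2/(s^\top\hat{y}) \le \tfrac{4}{\delta}$; both arguments regroup into $\bigl(1+\|\hat{y}\|\|s\|/(s^\top\hat{y})\bigr)^2\|H\|+4/\delta$.

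However, your displayed computation of the constant is wrong. With your stated bound $\|\hat{y}_j\| \le (2\kappa+\delta)\|s_j\|$ you get $\|I-\rho_j s_j\hat{y}_j^\top\| \le 1 + \tfrac{4(2\kappa+\delta)}{\delta} = \tfrac{8\kappa+5\delta}{\delta}$, which is \emph{not} $\tfrac{4\kappa+5\delta}{\delta} = \alpha$; the equality you assert is an arithmetic slip, and as written your argument only proves \eqref{up-H} with $\alpha$ replaced by the larger constant $(8\kappa+5\delta)/\delta$, i.e., a strictly weaker statement than the lemma. The repair is exactly the ``careful tracking of the $\theta_j$-convex-combination'' that you flag as the main obstacle but do not actually carry out: since $\hat{y}_j = \theta_j\bar{y}_j + (1-\theta_j)\gamma_{j+1}s_j$ with $0<\theta_j\le 1$, you should bound $\|\hat{y}_j\| \le \bigl[\theta_j\kappa + (1-\theta_j)\gamma_{j+1}\bigr]\|s_j\| \le \max\{\kappa,\gamma_{j+1}\}\|s_j\| \le (\kappa+\delta)\|s_j\|$ --- the maximum of the two coefficients, not their sum $\kappa+(\kappa+\delta)$. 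With this, $\rho_j\|s_j\|\|\hat{y}_j\| \le \tfrac{4(\kappa+\delta)}{\delta}$ and $1+\tfrac{4(\kappa+\delta)}{\delta} = \tfrac{4\kappa+5\delta}{\delta} = \alpha$ exactly, after which your unrolling coincides with the paper's. (Your reliance on $\gamma_{j+1}\le\kappa+\delta$ is shared with the paper's own proofs of Lemmas \ref{low} and \ref{upp}, so that step introduces no gap relative to the paper.)
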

\begin{proof}
For notational simplicity, we omit the subscript, and let $H=H_{k,i-1}$, $H^+=H_{k,i}$, $s=s_j$, $\hat{y}=\hat{y}_j$, $\rho=(s_j^\top\hat{y}_j)^{-1}=(s^\top\hat{y})^{-1}$. Now Eq.~\eqref{proof-lemma-low-2} can be written as
\[H^+ = H -\rho(H\hat{y}s^\top+s\hat{y}^\top H) + \rho ss^\top + \rho^2(\hat{y}^\top H \hat{y})ss^\top.\]
Using the facts that $\|uv^\top\|\leq\|u\|\cdot\|v\|$ for any vectors $u$ and $v$, $\rho s^\top s = \rho\|s\|^2 = \frac{s^\top s}{s^\top\hat{y}}\leq \frac{4}{\delta}$, and $\frac{\|\hat{y}\|^2}{s^\top\hat{y}}\leq 4\left(\frac{\kappa^2}{\delta}+\kappa+\delta\right)<\frac{4}{\delta}(\kappa+\delta)^2$, which follows from \eqref{proof-lemma-low-2},
we have that
\[\|H^+\|\leq \|H\| + \frac{2\|H\|\cdot\|\hat{y}\|\cdot\|s\|}{s^\top\hat{y}}+\frac{s^\top s}{s^\top\hat{y}}+\frac{s^\top s}{s^\top\hat{y}}\cdot\frac{\|H\|\cdot\|\hat{y}\|^2}{s^\top\hat{y}}.\]
Noting that $\frac{\|\hat{y}\|\|s\|}{s^\top\hat{y}}=\left[\frac{\|\hat{y}\|^2}{s^\top\hat{y}}\cdot\frac{\|s\|^2}{s^\top\hat{y}}\right]^{1/2}$,
it follows that
\begin{align*}\|H^+\|\leq \, \left(1+2\cdot\frac{4}{\delta}(\kappa+\delta)+\left(\frac{4}{\delta}(\kappa+\delta)\right)^2\right)\|H\|+\frac{4}{\delta}   =   \, (1+(4\kappa+4\delta)/\delta)^2\|H\|+\frac{4}{\delta}.\end{align*}
Hence, by induction we obtain \eqref{up-H}.
\end{proof}

Lemmas \ref{low} and \ref{upp} indicate that $H_k$ generated by \eqref{y-k-ini}-\eqref{mod-L-BFGS} satisfies Assumption \ref{assum3}. Moreover, since $\bar{y}_{k-1}$ defined in \eqref{y-k} depends on random samplings in the $k$-th iteration i.e., $\xi_k$, it follows that  given $\xi_{k}$ and $v_{k-1}$ $H_{k}$ is determined and Assumption \ref{assum4} is satisfied.

\bibliography{ref}
\end{document}